\title{Compressible subgroups and simplicity}
\author{Alejandra Garrido} 
\address{Facultad de Matem\'aticas, Universidad Complutense de Madrid, and ICMAT, Madrid, SPAIN}
\email{alejandra.garrido@ucm.es; alejandra.garrido@icmat.es}
\author{Colin D. Reid}
\address{School of Information and Physical Sciences, University of Newcastle Australia}
\email{colin@reidit.net}
\thanks{This research has been financially supported by the Australian Research Council (grant FL170100032) and by Spain’s Ministry of Science and Innovation (grants [PID2020-114032GB-I00] and [CEX2019-000904-S])}
\newtheorem{thm}{Theorem}[section]
\newtheorem{prop}[thm]{Proposition}
\newtheorem{lem}[thm]{Lemma}
\newtheorem{cor}[thm]{Corollary}
\newtheorem{que}{Question}
\theoremstyle{definition}
\newtheorem{defn}[thm]{Definition}
\newtheorem{rmk}[thm]{Remark}
\newcommand{\Zb}{\mathbb{Z}}
\newcommand{\Nb}{\mathbb{N}}
\newcommand{\Rb}{\mathbb{R}}
\newcommand{\mc}[1]{\mathcal{#1}}
\newcommand{\mf}[1]{\mathfrak{#1}}
\newcommand{\ms}[1]{\mathscr{#1}} 
\newcommand{\tdlc}{t.d.l.c.\@\xspace}
\newcommand{\Aut}{\mathrm{Aut}}
\newcommand{\Homeo}{\mathrm{Homeo}}
\newcommand{\TMon}{\overline{\mathrm{M}}}
\newcommand{\rist}{\mathrm{rist}}
\newcommand{\con}{\mathrm{con}}
\newcommand{\N}{\mathrm{N}}
\newcommand{\CC}{\mathrm{C}}
\newcommand{\Full}{\mathrm{F}}
\newcommand{\Al}{\mathrm{A}}
\newcommand{\Der}{\mathrm{D}}
\newcommand{\inv}{^{-1}}
\newcommand{\supp}{\mathrm{supp}}
\newcommand{\defbold}{\textbf}
\newcommand{\triv}{\{1\}}
\newcommand{\grp}[1]{\langle #1 \rangle}
\newcommand{\ngrp}[1]{\langle \langle #1 \rangle \rangle}
\newcommand{\ol}[1]{\overline{#1}}
\begin{document}

\begin{abstract}
In this article we give sufficient conditions for a group to have simple derived subgroup; the argument is based on generalising properties observed for extremely proximal micro-supported actions on the Cantor space, and generalises previous results of Matui, Le Boudec and others in this direction.  We give a sufficient condition for a non-trivial normal subgroup (not assumed closed) of a locally compact group $G$ to be open, also based on the theory of micro-supported actions. 
This shows in particular that many of the class of robustly monolithic groups introduced by Caprace--Reid--Wesolek are simple-by-discrete.
\end{abstract}

\keywords{almost simple group, simple derived group, commutator group, totally disconnected locally compact group, action on Cantor space, Cantor dynamics, micro-supported action}
\subjclass[2020]{20E32, 
22F50, 
22D05, 
20E08, 
37B05 
}

\maketitle

\tableofcontents

\addtocontents{toc}{\protect\setcounter{tocdepth}{1}}

\section{Introduction}

\subsection{Background}

Let $G$ be a group acting faithfully on a set $X$; given $Y \subseteq X$ we write $Y^c := X \setminus Y$ and define the \defbold{rigid stabiliser} $\rist_G(Y)$ to be the fixator (that is, pointwise stabiliser) of $Y^c$.  There are a number of simplicity theorems in the literature based on the existence of a $G$-invariant family $\mc{C}$ of subsets with the following properties:
\begin{enumerate}[(a)]
\item For every $g \in G \setminus \{1\}$, there is $Y \in \mc{C}$ such that $gY \cap Y = \emptyset$.
\item The subgroups $\rist_G(Y)$ are non-trivial.
\item The elements of $\mc{C}$ can all be mapped inside each other by elements of $G$.
\end{enumerate}

Criterion (a) is automatic if, for example, $X$ is a Hausdorff space on which $G$ acts by homeomorphisms and $\mc{C}$ is a base of topology.  So the interesting content in terms of the action is usually in criteria (b) and (c).  The conclusion is usually that $G$ is monolithic or almost simple, in the following sense.

\begin{defn}
The \defbold{monolith} of a group $G$ is the intersection of all its non-trivial normal subgroups, and $G$ is \defbold{monolithic} if the monolith is non-trivial, that is, there is a unique smallest non-trivial normal subgroup.  A group $G$ is \defbold{almost simple} if the monolith of $G$ is non-abelian and simple.
\end{defn}

An illustrative example is Tits' Property (P) theorem.  We say an action on a tree is \defbold{geometrically dense} if the action preserves no end or proper subtree of the tree; this implies that every half-tree can be mapped inside every other half-tree (Lemma~\ref{lem:tree:geom_dense}), which is an instance of criterion (c).  Given a group $G$ of automorphisms of a tree, we write $G^+$ for the group generated by arc stabilisers and $G^{++}$ for the group generated by rigid stabilisers of half-trees in $G$.

\begin{thm}[{\cite[Th\'{e}or\`{e}me~4.5]{Tits70}}]\label{thm:Tits}
Let $T$ be a tree and let $G \le \Aut(T)$ act geometrically densely on $T$.  Suppose also that the fixator of each (finite or infinite) path $P$ is a direct product of elements supported on individual fibres of the closest point projection from $T$ to $P$, and that $G^+$ is non-trivial.  Then $G$ is almost simple with monolith $G^+ = G^{++}$.
\end{thm}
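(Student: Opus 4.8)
The plan is to show that $G^{+}=G^{++}$ is a non-abelian simple subgroup lying inside every non-trivial normal subgroup of $G$; it is then the monolith, so $G$ is almost simple with the stated monolith. I would first record two reductions. Applying the path-fixator hypothesis to a single edge $e=\{v,w\}$: the fibres of the closest point projection $T\to e$ are exactly the two half-trees $T_{v},T_{w}$ cut out by $e$, and each of $\rist_{G}(T_{v}),\rist_{G}(T_{w})$ lies in $G_{e}$ (an element fixing $T_{v}$ pointwise fixes $v$ and every edge at $v$ other than $e$, hence $e$ too, hence $w$), so the hypothesis reads $G_{e}=\rist_{G}(T_{v})\times\rist_{G}(T_{w})$. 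Consequently $G_{e}\le G^{++}$ and $\rist_{G}(\mathfrak{h})\le G^{+}$ for every half-tree $\mathfrak{h}$, whence $G^{+}=G^{++}$; both are normal in $G$ since $G$ permutes edges and half-trees, and $G^{+}\neq 1$ by hypothesis. Since $G^{+}\neq 1$, some $G_{e}\neq 1$, hence some $\rist_{G}(\mathfrak{h}_{0})\neq 1$; as $\rist_{G}(g\mathfrak{h})=g\,\rist_{G}(\mathfrak{h})\,g\inv$, as $\rist_{G}$ is monotone under inclusion, and as every half-tree can be mapped inside every other (Lemma~\ref{lem:tree:geom_dense}), it follows that $\rist_{G}(\mathfrak{h})\neq 1$ for \emph{every} half-tree $\mathfrak{h}$.

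The main step is: \emph{if $N\trianglelefteq G$ is non-trivial then $N\supseteq \rist_{G}(\mathfrak{h})$ for some — hence, mapping half-trees into one another and using normality of $N$, for every — half-tree $\mathfrak{h}$, so $N\supseteq G^{++}=G^{+}$.} To prove this I would fix $1\neq n\in N$; by an elementary tree argument (criterion~(a) for the family of half-trees of $T$) there is a half-tree $\mathfrak{h}$ with $n\mathfrak{h}\cap\mathfrak{h}=\emptyset$, and then for every $h\in\rist_{G}(\mathfrak{h})$ the commutator $[n,h]=(nhn\inv)h\inv$ lies in $N$ and is a product of $nhn\inv\in\rist_{G}(n\mathfrak{h})$ and $h\inv\in\rist_{G}(\mathfrak{h})$, two elements with disjoint supports. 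Here the path-fixator hypothesis is essential: choosing a finite path $P$ separating $\mathfrak{h}$ from $n\mathfrak{h}$, all such commutators and their $G$-conjugates lie in $\mathrm{Fix}_{G}(P)$, which splits as a direct product over the branches of $T$ off $P$; isolating the component in the branch containing $\mathfrak{h}$ and moving it around by $n$ and by elements fixing individual branches pointwise, one drives all of $\rist_{G}(\mathfrak{h})$ into $N$. This bookkeeping — passing from the commutator subgroup of $\rist_{G}(\mathfrak{h})$ to all of $\rist_{G}(\mathfrak{h})$, which is precisely what the direct-product ("Property (P)") hypothesis is designed to make possible — is the one substantial calculation, and the step I expect to be the main obstacle.

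Granting the main step, $G^{+}$ is the monolith of $G$. For simplicity of $G^{+}$ I would check that $G^{+}$, acting on $T$, again satisfies all the hypotheses of the theorem: a standard argument shows that a non-trivial normal subgroup of a geometrically dense group acts geometrically densely (its fixed-point set and its set of fixed ends are $G$-invariant); the path-fixator hypothesis descends to $G^{+}$ because each branch rigid stabiliser $\rist_{G}(f)$ already lies in $G^{++}=G^{+}$, so $\mathrm{Fix}_{G^{+}}(P)=\mathrm{Fix}_{G}(P)=\prod_{f}\rist_{G^{+}}(f)$; and $(G^{+})^{+}=G^{+}$ because the arc stabilisers of $G^{+}$ are exactly the $G_{e}$, which generate $G^{+}$. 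Applying the main step with $G^{+}$ in place of $G$ then shows that every non-trivial normal subgroup of $G^{+}$ contains $(G^{+})^{+}=G^{+}$; as $G^{+}\neq 1$, it is simple. Finally $G^{+}$ is non-abelian: it has the non-trivial proper subgroup $\rist_{G}(\mathfrak{h})$ — proper because for a half-tree $\mathfrak{h}'$ disjoint from $\mathfrak{h}$ the non-trivial group $\rist_{G}(\mathfrak{h}')\le G^{+}$ moves points outside $\mathfrak{h}$ — and a simple abelian group (being some $\Zb/p$) has no proper non-trivial subgroup. Hence $G$ is almost simple with monolith $G^{+}=G^{++}$.
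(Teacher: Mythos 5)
The reductions in your first paragraph are fine and standard: property (P) applied to a single edge gives $G_e=\rist_G(T_v)\times\rist_G(T_w)$, hence $G^+=G^{++}$, and geometric density (Lemma~\ref{lem:tree:geom_dense}) plus conjugation gives non-triviality of every half-tree rigid stabiliser. The gap is exactly where you predicted it: the passage from ``$[n,h]\in N$ with $[n,h]=(nhn\inv)h\inv$ a product of two disjointly supported pieces'' to ``$\rist_G(\mathfrak{h})\le N$''. Your mechanism --- take a finite path $P$ separating $\mathfrak{h}$ from $n\mathfrak{h}$, use the direct-product decomposition of $\mathrm{Fix}_G(P)$, ``isolate the component in the branch containing $\mathfrak{h}$'' and move it around --- does not work: $N\cap\mathrm{Fix}_G(P)$ is not invariant under the projections onto the direct factors of $\mathrm{Fix}_G(P)$, so knowing that an element of $N$ has $h\inv$ as one of its components gives you no element of $N$ supported only on that branch. (Also, $G$-conjugates of such commutators do not in general lie in $\mathrm{Fix}_G(P)$.) What finite-path commutator bookkeeping of this kind actually yields is $\Der(\rist_G(\mathfrak{h}))\le N$ --- via identities such as Lemma~\ref{lem:commutator_product}(i) --- and this is precisely the conclusion of the general machinery of Theorem~\ref{thm:compressible_splittable_union}(i)--(ii)/Corollary~\ref{cor:tree}: the monolith is $\Der(G^{++})$, a priori smaller than $G^{++}$.

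The missing idea is the use of property (P) for an \emph{infinite} path, which is the whole point of Tits' hypothesis (and what condition \ref{item:translation} abstracts; cf.\ the remark after Definition~\ref{def:msc:subgroups}). Concretely: since $N$ is a non-trivial normal subgroup of a geometrically dense group, $N$ is itself geometrically dense (Lemma~\ref{lem:tree:geom_dense}(ii),(iii)), so one can choose a hyperbolic $a\in N$ whose axis $L$ lies inside the half-tree complementary to $\mathfrak{h}$ (conjugating by an element of $N$ mapping a half-tree containing $L$ into that complement). Then each $x\in\rist_G(\mathfrak{h})$ fixes $L$ pointwise and is supported in a single fibre of the projection to $L$; property (P) applied to the bi-infinite path $L$ says $\mathrm{Fix}_G(L)$ is the \emph{full} direct product of the fibre groups, so the infinite product $g=\prod_{k\ge 0}a^k x a^{-k}$ is an element of $G$, and the telescoping identity $x=g\,(ag\inv a\inv)=[g,a]\in N$ puts all of $\rist_G(\mathfrak{h})$ into $N$. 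Without this infinite-product (or some substitute such as the contraction-group argument of Corollary~\ref{cor:tree:simpleG++}, which needs local compactness and closedness), your argument proves only that $\Der(G^{++})$ is the simple monolith, not that the monolith is $G^+=G^{++}$. The remainder of your outline (simplicity by applying the main step to $G^+$, and non-abelianness of $G^+$) is acceptable once the main step is repaired.
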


Other authors have subsequently proved more general simplicity criteria groups acting on trees (e.g. \cite{LB-AP}, \cite{MollerVonk}), as well as for groups acting on other combinatorial structures (e.g. \cite{HaglundPaulin}, \cite{Caprace-buildings}, \cite{Lazarovich}, \cite{deMedtsSilvaStruyve}).  Taken together, Tits' theorem and the results it has inspired form a major source of examples of non-linear simple groups, especially for the theory of totally disconnected, locally compact (\tdlc) groups.

A second motivating example, and the original motivation of the authors in writing this article, comes from the theory of topological full groups.  We first introduce a special case of criteria (a) and (c).

\begin{defn}
We take the convention that all (locally) compact spaces are required to be Hausdorff.  Let $X$ be a Hausdorff space and let $G$ be a group acting on $X$ by homeomorphisms.  A subset $Y \subseteq X$ is \defbold{$G$-compressible} if for all non-empty open $O \subseteq X$, there exists $g \in G$ such that $gY \subseteq O$.  We say the action is \defbold{compressible} if a non-empty $G$-compressible open subset exists and \defbold{fully compressible} (or \defbold{extremely proximal}, see for instance \cite[\S2.3]{LeBoudecURS} and articles referenced there) if every nondense open set is $G$-compressible.

Given a topological space $X$ and $G \le \Homeo(X)$, the \defbold{piecewise full group} (or \defbold{topological full group}) $\Full(G)$ consists of all homeomorphisms $\phi$ from $X$ to $X$, such that there exists an open cover $\mc{O}$ of $X$ and elements $g_O \in G$ such that $\phi(x) = g_O(x)$ for all $x \in O \in \mc{O}$.

Throughout, we write $\Der(G)$ for the \defbold{derived subgroup} of $G$.
\end{defn}

The general properties of piecewise full groups are easiest to analyse when $X$ is zero-dimensional.
In the case that $X$ is the Cantor space and the action is minimal, Nekrashevych \cite[Theorem~4.1]{Nekra} showed that the group $\Full(G)$ is almost simple; the monolith $\Al(G)$ has an explicit generating set, but the structure of the quotient $\Full(G)/\Al(G)$ is mysterious in general.  
More directly relevant to the present article is the following theorem of Matui (paraphrased):

\begin{thm}[{See \cite[Proposition~4.11]{MatuiSimple} and \cite[Theorem~4.16]{MatuiSimple}}]\label{thm:Matui}
Let $X$ be the Cantor space and let $G \le \Homeo(X)$ be minimal and fully compressible.  Then $\Full(G)$ is almost simple with monolith $\Der(\Full(G))$.
\end{thm}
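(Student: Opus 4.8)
The plan is to present this action as another instance of criteria (a)--(c) above and then extract the monolith by commutator calculations. Write $H := \Full(G)$; it acts faithfully on $X$ by homeomorphisms, and since $G \sleq H$ the $H$-action on $X$ is again minimal and fully compressible. Take $\mc{C}$ to be the family of non-empty proper clopen subsets of $X$. Criterion (c) is then immediate: a non-empty proper clopen $Y$ is non-dense, hence $G$-compressible, so for every non-empty clopen $O$ some $g \in G \sleq H$ has $gY \subseteq O$. Criterion (a) holds because $X$ is Hausdorff and zero-dimensional: for $1 \ne \phi \in H$ pick $x$ with $\phi x \ne x$, separate $x$ and $\phi x$ by disjoint open sets $U$ and $V$, and take a clopen $Y$ with $x \in Y \subseteq U \cap \phi^{-1}V$; then $Y \in \mc{C}$ and $\phi Y \cap Y = \emptyset$. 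For criterion (b), given non-empty clopen $Y$, compressibility yields disjoint non-empty clopen $A, B \subseteq Y$ and $g \in G$ with $gA \subseteq B$ (possible since $X$ is perfect), and the homeomorphism equal to $g$ on $A$, to $g^{-1}$ on $gA$, and to the identity elsewhere is a non-trivial element of $\rist_H(Y)$. Permuting five pairwise disjoint clopen $G$-translates of a common small clopen subset of $Y$ realises a copy of $\Sym(5)$ in $\rist_H(Y)$ whose alternating part lies in $\Der(H)$; in particular $\Der(H)$ is non-abelian and non-trivial.

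Granting (a)--(c), here is the argument for the monolith. Let $N \ne \triv$ be normal in $H$ and fix $1 \ne n \in N$; by (a) choose $Y \in \mc{C}$ with $nY \cap Y = \emptyset$, taking $Y$ as small as we wish. For $h_1, h_2 \in \rist_H(Y)$ each $n h_i n^{-1}$ is supported in $nY$, so it commutes with everything supported in $Y$, and a short computation gives the identity $[n, h_1 h_2]^{-1}[n, h_1][n, h_2] = [h_1, h_2]$; since the left-hand side lies in $N$, we get $\Der(\rist_H(Y)) \sleq N$. Now for an arbitrary non-empty proper clopen $W$, compressibility of $G$ gives $g \in G$ with $gW \subseteq Y$, whence $\rist_H(gW) \sleq \rist_H(Y)$ and $\Der(\rist_H(W)) = g^{-1}\Der(\rist_H(gW))g \sleq g^{-1}Ng = N$. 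Thus $N$ contains $K := \grp{\,\Der(\rist_H(W)) : W \subsetneq X \text{ clopen}\,}$. The remaining point — the analogue for the present setting of the equality of the monolith with $G^+ = G^{++}$ in Tits' Theorem~\ref{thm:Tits}, and of \cite[Proposition~4.11]{MatuiSimple} — is that $K = \Der(H)$: the inclusion $K \sleq \Der(H)$ is clear, and for the reverse one fragments an arbitrary commutator of elements of $\Full(G)$, over a clopen partition on which the two elements agree with single elements of $G$, into a product of commutators of elements of rigid stabilisers of (proper) clopen sets. Since $\Der(H)$ is itself a non-trivial normal subgroup (first paragraph), this shows $\Der(H)$ is the monolith of $H$.

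It remains to show the monolith $\Der(H)$ is \emph{simple}, which does not follow formally from monolithicity, as the monolith could a priori be a restricted power of a simple group permuted by $H$. For this I would re-run the commutator argument one level down, inside $\Der(H)$: given $1 \ne l \in \Der(H)$, choose $Y \in \mc{C}$ with $lY \cap Y = \emptyset$, note $\rist_{\Der(H)}(Y) \ne \triv$ (the alternating subgroups built above lie in $\Der(H)$), apply the same identity with $l$ in place of $n$ and with $h_i \in \rist_{\Der(H)}(Y)$ to get $\Der(\rist_{\Der(H)}(Y))$ inside the normal closure $\ngrp{l}$ taken in $\Der(H)$, spread this over all proper clopen sets by compressibility exactly as before, and invoke the corresponding fragmentation statement inside $\Der(H)$ to conclude $\ngrp{l} = \Der(H)$. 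Together with the non-abelianness from the first paragraph, this shows $\Der(H)$ is non-abelian and simple, so $H = \Full(G)$ is almost simple with monolith $\Der(\Full(G))$.

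The one genuinely hard step is the fragmentation statement used twice above: that $\Der(\Full(G))$ is generated by the derived subgroups of the rigid stabilisers of proper clopen sets (and the corresponding assertion one level down). This is exactly where minimality and full compressibility enter in an essential way; by contrast the commutator identities and the spreading step are routine once (a)--(c) are in place, although one must take care to keep the relevant normal closures inside $\Der(H)$ rather than inside $H$. In the body of the paper this fragmentation is what our general simplicity criterion is built to supply, so in practice Theorem~\ref{thm:Matui} can be deduced from that criterion rather than proved in isolation.
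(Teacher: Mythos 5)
Your first two steps are sound and match the standard mechanism (the identity you verify is the paper's Lemma~\ref{lem:commutator_product}(i), and the spreading-by-compressibility argument is Lemma~\ref{lem:double_commutator:subgroups:bis}): every non-trivial $N \trianglelefteq H$ contains $K := \grp{\Der(\rist_H(W)) \mid W \subsetneq X \text{ clopen}}$. The genuine gap is the step you yourself flag, and your sketch for it would fail. Reducing to "$K = \Der(H)$" and proposing to prove it by fragmenting a commutator over a clopen partition on which the two elements agree with single elements of $G$ does not work: agreement with elements of $G$ is irrelevant (only supports matter), and the one hard case is a commutator $[s,t]$ with $s \in \rist_H(W_1)$, $t \in \rist_H(W_2)$ and $W_1 \cup W_2 = X$, which no clopen partition decomposes into commutators of elements supported in proper clopen sets. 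Moreover, proving $K = \Der(H)$ in the abstract is essentially equivalent to the theorem ($K$ is a non-trivial normal subgroup inside $\Der(H)$, so the equality follows from monolithicity rather than yielding it). The paper's way around this is not a generation statement but an argument inside the normal subgroup itself: by Lemma~\ref{lem:double_commutator:subgroups:bis}(ii) the monolith $M$ contains an element $h$ with $[A,hAh\inv]=\triv$; since $hAh\inv$ commutes with $A$ and $A \nleq B$, joinability (if two clopen sets do not cover $X$, their union lies in a proper clopen set, Lemma~\ref{lem:pw_full}(i)) places $\grp{hAh\inv,B}$ inside one member of the family, so $[hah\inv,b] \in M$, and Lemma~\ref{lem:commutator_product}(ii) together with $[h^{\pm 1},b]\in M$ converts this to $[a,b]\in M$. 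That, plus $H = \Full(G)$ being generated by rigid stabilisers of proper clopen sets (Lemma~\ref{lem:pw_full:split}, Lemma~\ref{lem:pw_full}(iii)), is what gives $\Der(H) \le N$.

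The simplicity step has the same gap one level down, plus two further unproved ingredients: "spread over all proper clopen sets by compressibility exactly as before" now requires conjugators taken from $\Der(H)$, i.e.\ that proper clopen sets are $\Der(H)$-compressible — true, but itself needs proof (in the paper it comes from Proposition~\ref{prop:compressible_splittable}, which makes $\Der(H)$ transitive on each $H$-orbit of rigid stabilisers); and "the corresponding fragmentation statement inside $\Der(H)$" is not corresponding at all, since $\Der(H)$ is not piecewise full, so even $\Der(H) = \grp{\rist_{\Der(H)}(W)}$ is unclear. The paper avoids both issues by showing $A \cap \Der(H)$ is non-abelian (Lemma~\ref{lem:leafless}(iv)), bounding every non-trivial normal subgroup of $\Der(H)$ below by the groups $\Der(A \cap \Der(H))$, and observing that their intersection $M^*$ is characteristic in $\Der(H)$, hence normal in $G$, hence equal to the monolith $\Der(H)$. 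So what you have written is a correct reduction of Theorem~\ref{thm:Matui} to the paper's general criterion (Theorem~\ref{thm:compressible_splittable_union} applied via Lemmas~\ref{lem:pw_full:split} and~\ref{lem:pw_full} and Corollary~\ref{cor:micro-supported_compression}), which is indeed how the paper proceeds, but it is not a self-contained proof: the two fragmentation claims and the compressibility of the $\Der(H)$-action carry all the real content.
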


In other words, in this case the quotient $\Full(G)/\Al(G)$ is abelian.  It is also easy to see that $\Full(G)$ is generated by elements of $G$-compressible support, so criteria (a)--(c) are satisfied.  Note, however, that in \cite[Theorem 4.1]{Nekra} the full strength of (c) cannot be assumed because one of the main goals of the work \cite{Nekra} is to show that there are groups $G$ for which  $\Al(G)$ is amenable and it is known that minimal fully compressible actions are never amenable.

\subsection{Compression families of subgroups}

In this article we develop some general commutator arguments for proving that a group is almost simple.  To start, let us generalise the criteria (a)--(c) above to properties of a certain family of subgroups that will occur in a fully compressible action.

\begin{defn}\label{def:msc:subgroups}
	Let $G$ be a non-trivial group.  Let $\mc{C}$ be a family of subgroups of $G$ that is invariant under conjugation in $G$ and let $H \le G$.  Consider the following conditions:
	\begin{enumerate}[label=(\Alph*)]
		\item Every $A \in \mc{C}$ is non-abelian. \label{item:nonabelian}
		\item For every $g \in H \setminus \{1\}$, there is $A \in \mc{C}$ such that $[A,gAg\inv]=\triv$. \label{item:disjointtranslates}
		\item For all $A,B \in \mc{C}$ there is $g \in H$ such that $gAg\inv \le B$. \label{item:semitransitive}
		\item Given $A,B,C \in \mc{C}$ such that $[A,C]=\triv$ and $C \nleq B$, there exists $D \in \mc{C}$ such that $\grp{A,B} \le D$.\label{item:joinable}
		\item For every $A\in \mc{C}$ there is $g\in H$ and a homomorphism $(-)^\bullet: \prod_{i \ge 1}A \rightarrow H$ such that
		\[
		\forall a \in A: (a,1,1,\dots)^\bullet = a
		\] 
		and
		\[
		\forall (a_i)_{i \ge 1} \in \prod_{i \ge 1}A: \; g(a_1,a_2,a_3,\dots)^\bullet g^{-1} = (1,a_1,a_2,\dots)^\bullet.
		\]
		 \label{item:translation}
		 
	\end{enumerate}
	We say $\mc{C}$ is \defbold{nomadic} for $H$ if conditions (A)--(B) are satisfied; that $H$ is \defbold{semi-transitive} on $\mc{C}$ if condition (C) is satisfied; that $\mc{C}$ is \defbold{joinable} over $H$ if condition (D) is satisfied; and that $\mc{C}$ is \defbold{shiftable} over $H$ if condition (E) is satisfied.
	We say $\mc{C}$ is a \defbold{compression family} for $H$ if it satisfies (A)--(C), a \defbold{joinable compression family} for $H$ if it satisfies (A)--(D) and a \defbold{shift-joinable compression family} for $H$ if it satisfies (A)--(E).
\end{defn}

The properties listed are derived from the typical case of interest, where $\mc{C} = \{\rist_G(Y) \mid Y \in \mc{C}^*\}$, where $\mc{C}^*$ is a $G$-invariant set of subsets of some set $X$ on which $G$ acts faithfully and compressibly.
Condition (D) is needed to reflect the fact that if the union of two subsets in $\mc{C}^*$ is not $X$ then it is an element of $\mc{C}^*$.
 See Lemma~\ref{lem:micro-supported_family} and Corollary~\ref{cor:micro-supported_compression} for sufficient conditions to obtain a (joinable) compression family this way.

 Conditions (A)--(D) imply a weak version of Condition (E): for every $A\in \mc{C}$ there is $g\in H$ such that $g^nAg^{-n}$ commutes with $A$ for all $n>0$ (see Lemma~\ref{lem:leafless}).  
 Condition (E) is inspired by the proof of Tits' simplicty theorem for groups acting on trees, where $H$ contains an infinite direct \emph{product} (as opposed to sum) of conjugates of $A\in\mc{C}$.

\

We now come to our first main theorem.

\begin{thm}[{See Section~\ref{sec:compressible_splittable}}]\label{thm:compressible_splittable_union}
Let $G$ be a group, $\mc{C}$ a joinable compression family for $G$, and $H = \grp{\mc{C}}$.
\begin{enumerate}[(i)]
	\item The group $\Der(H)$ is the monolith of $G$.
	\item If $H$ is semi-transitive on $\mc{C}$, then $\Der(H)$ is simple.
	\item\label{item:joinable_with_translation_implies_perfect} If $\mc{C}$ is a shift-joinable compression family for $H$, then $\Der(H) = H$ is the monolith of $G$ and is simple.
\end{enumerate}
\end{thm}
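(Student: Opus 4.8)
The plan is to prove the three statements in sequence, using the earlier parts as building blocks. For part (i), the main work is to show that $\Der(H)$ is contained in every non-trivial normal subgroup $N \trianglelefteq G$ and that $\Der(H)$ is itself non-trivial and normal in $G$. That $\Der(H)$ is normal in $G$ is clear since $\mc{C}$ is conjugation-invariant, so $H$ is normal in $G$, hence so is $\Der(H)$. Non-triviality follows from condition \ref{item:nonabelian}: each $A \in \mc{C}$ is non-abelian, so $\Der(H) \ne \triv$. For containment, take $1 \ne n \in N$ and choose by \ref{item:disjointtranslates} some $A \in \mc{C}$ with $[A, nAn\inv] = \triv$; the standard commutator manoeuvre $[a, n] = a(na\inv n\inv) \in N$ combined with the disjointness-of-translates relation should let us recover all of $\Der(A)$ inside $N$, and then a semi-transitivity-free version of condition \ref{item:semitransitive} (we only need conjugates of one fixed $A$ to land inside arbitrary $B \in \mc{C}$, which is still \ref{item:semitransitive}) propagates $\Der(B) \le N$ for all $B$, whence $\Der(H) = \grp{\Der(B) : B \in \mc{C}} \le N$. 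Condition \ref{item:joinable} is what guarantees that the $\Der(B)$'s together generate $\Der(H)$ rather than merely a subgroup: a commutator of elements of $A$ and $B$ sits inside $\Der(D)$ for a joining $D$. This establishes (i), and part (ii) is then the observation that once $\Der(H)$ is a minimal normal subgroup of $G$ (hence characteristically simple as a group once we know it is monolithic), semi-transitivity upgrades this to genuine simplicity: any non-trivial normal subgroup of $\Der(H)$ must, by the same commutator argument applied inside $\Der(H)$, contain some $\Der(A)$ and hence all of them by \ref{item:semitransitive}, so it is everything.

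For part (iii), which is the real content here, we must show that the shift condition \ref{item:translation} forces $H$ to be perfect, i.e.\ $\Der(H) = H$; combined with (i) and (ii) this gives the full statement (note that shiftability \ref{item:translation} applied with all components equal to $1$ except one gives $g$ conjugating $A$ to a commuting copy of itself, so in particular $H$ is semi-transitive is \emph{not} automatic, but (ii)'s conclusion about simplicity of $\Der(H)$ will instead follow once we know $\Der(H) = H$ is the monolith and apply the monolith's characteristic simplicity together with a fresh semi-transitivity-like argument — actually the cleanest route is: $\Der(H) = H$ makes $H$ the monolith by (i), and a monolith that equals its own derived subgroup and in which every non-trivial normal subgroup contains some $\Der(A) = \Der(A)$... we will need a direct argument for simplicity in case (iii) that does not presuppose \ref{item:semitransitive}). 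So the crux is: given $A \in \mc{C}$, express every $a \in A$ as a product of commutators in $H$. Here is where \ref{item:translation} does its job. Fix $a \in A$ and let $(-)^\bullet \colon \prod_{i \ge 1} A \to H$ and $g \in H$ be as in \ref{item:translation}. Consider the element $p = (a, a, a, \dots)^\bullet$ (the diagonal), or rather a suitable finite-support modification. The key identity is that $g p g\inv \cdot p\inv$ equals $(1, a, a, \dots)^\bullet \cdot (a, a, a, \dots)^{\bullet} {}\inv = (a\inv, 1, 1, \dots)^\bullet = (a\inv, 1, 1, \dots)^\bullet$, which by the normalisation clause of \ref{item:translation} is exactly $a\inv$. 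Hence $a\inv = [g, p]$ up to rearrangement, exhibiting $a$ (equivalently $a\inv$) as a single commutator in $H$. Since $H = \grp{\mc{C}} = \grp{\bigcup_{A \in \mc{C}} A}$ and every generator is a commutator, $H = \Der(H)$.

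The one technical subtlety to handle carefully — and the step I expect to be the main obstacle — is the meaning of the infinite direct product $\prod_{i \ge 1} A$ and whether $(a, a, a, \dots)$ is a legitimate element of its domain: as an abstract group the unrestricted direct product is fine and the diagonal is a genuine element, so $(-)^\bullet$ applies to it, but one must make sure the homomorphism $(-)^\bullet$ really is defined on the \emph{full} product (not just the direct sum) — and indeed the statement of \ref{item:translation} writes $\prod_{i \ge 1} A$, the unrestricted product, precisely so that diagonal-type elements are available. Granting that, the computation $g(a,a,a,\dots)^\bullet g\inv \cdot ((a,a,a,\dots)^\bullet)\inv = (1,a,a,\dots)^\bullet \cdot ((a,a,a,\dots)^\bullet)\inv = (a\inv,1,1,\dots)^\bullet = a\inv$ is immediate from the two displayed relations in \ref{item:translation} and functoriality of $(-)^\bullet$, so $[g^{-1}, (a,a,\dots)^\bullet] = a$ after renaming, and perfectness follows. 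Then $\Der(H) = H$, which is the monolith of $G$ by part (i); and for simplicity of $H$ in case (iii) we re-run the argument of part (ii) but using \ref{item:translation} itself (which supplies, for each $A$, an element $g$ with $[A, g^n A g^{-n}] = \triv$ for all $n$, via Lemma~\ref{lem:leafless}, giving enough room for the commutator calculus to show any non-trivial normal subgroup of $H$ contains $\Der(A) = A$ for some — hence, via \ref{item:joinable} and the shift — every $A$). This completes the three parts.
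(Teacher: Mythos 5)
Your part (iii) is essentially the paper's own argument: the diagonal element $b=(a,a,a,\dots)^\bullet$ with $a=bgb\inv g\inv$ is exactly the computation used there, and your worry about semi-transitivity is unfounded, since a shift-joinable compression family for $H$ satisfies (A)--(E) \emph{for $H$} by definition, so condition \ref{item:semitransitive} holds for $H$ and simplicity in (iii) follows directly from (ii); your sketched substitute (in particular the claim $\Der(A)=A$) is neither needed nor correct. The genuine gaps are in (i) and (ii). In (i), the crucial containment $\Der(H)\le N$ rests on your assertion that a commutator of elements of $A$ and $B$ lies in $\Der(D)$ for ``a joining $D$''. But condition \ref{item:joinable} is conditional: it produces $D\supseteq\grp{A,B}$ only when some $C\in\mc{C}$ commutes with $A$ and $C\nleq B$, and such a join need not exist for arbitrary $A,B$ (in the motivating model, rigid stabilisers of two open sets with dense union have no join in $\mc{C}$). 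The paper supplies the missing mechanism: writing $M$ for the monolith, if $A\nleq B$ one first takes $h\in M$ with $[A,hAh\inv]=\triv$ (Lemma~\ref{lem:double_commutator:subgroups:bis}(ii)), applies \ref{item:joinable} to the triple $(hAh\inv,B,A)$ to get $\grp{hAh\inv,B}\le D$ and hence $[hah\inv,b]\in\Der(D)\le M$, and then removes $h$ using $[h^{\pm1},b]\in M$ and Lemma~\ref{lem:commutator_product}(ii). Without a correction step of this kind your argument only establishes $\grp{\Der(A)\mid A\in\mc{C}}\le N$, which is Lemma~\ref{lem:double_commutator:subgroups:bis}(ii), not $\Der(H)\le N$; the identity $\Der(H)=\grp{\Der(A)\mid A\in\mc{C}}$ is precisely what is at stake and cannot be quoted as if it were immediate from (D).

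Part (ii) has a similar hole at its final step, ``so it is everything''. Running the commutator manoeuvre inside $\Der(H)$ against a non-trivial $N\trianglelefteq\Der(H)$ yields only $\Der(A\cap\Der(H))\le N$, not $\Der(A)\le N$, because $A$ need not normalise $N$; and one must separately know that $A\cap\Der(H)$ is non-abelian, which requires Proposition~\ref{prop:compressible_splittable} (to see $\Der(H)$ still acts semi-transitively on $\mc{C}$) together with Lemma~\ref{lem:leafless}(iv). Even after propagating to all $A\in\mc{C}$, the subgroup $\grp{\Der(A\cap\Der(H))\mid A\in\mc{C}}$ is not obviously all of $\Der(H)$. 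The paper closes this by a different device: the intersection $M^*$ of all non-trivial normal subgroups of $\Der(H)$ is non-trivial by the above, characteristic in $\Der(H)$, hence normal in $G$, and therefore contains the monolith $\Der(H)$ from part (i), forcing $M^*=\Der(H)$ and hence simplicity. You need this (or an equivalent) closing argument; minimality of $\Der(H)$ as a normal subgroup of $G$ plus characteristic simplicity, which is what your parenthetical gestures at, does not by itself rule out proper non-trivial normal subgroups of $\Der(H)$.
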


Theorem~\ref{thm:compressible_splittable_union} yields a statement about compressible micro-supported actions in the topological sense, as follows.

\begin{cor}[{Compare \cite[Proposition~4.6]{LeBoudecURS}}]\label{cor:fully_compressible_splittable}
Let $X$ be an infinite Hausdorff space, let $G \le \Homeo(X)$, let $\mc{C}$ be a $G$-invariant collection of non-empty open subsets of $X$ and let $H = \grp{\rist_G(Y) \mid Y \in \mc{C}}$.  Suppose that the elements of $\mc{C}$ are all $H$-compressible, and that given $Y,Z \in \mc{C}$, either $Y \cup Z$ is dense or $Y \cup Z \subseteq W$ for some $W \in \mc{C}$.  Then $G$ is almost simple with monolith $\Der(H)$, and the action of $\Der(H)$ is fully compressible.
\end{cor}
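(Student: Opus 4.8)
The plan is to deduce the corollary from Theorem~\ref{thm:compressible_splittable_union} applied to the family of rigid stabilisers $\mc{D} := \{\rist_G(Y) \mid Y \in \mc{C}\}$ (or, where forced, a suitably adjusted family): since $g\,\rist_G(Y)\,g^{-1} = \rist_G(gY)$ and $\mc{C}$ is $G$-invariant, $\mc{D}$ is invariant under conjugation in $G$, and $\grp{\mc{D}} = H$. First one disposes of degenerate cases: if $\mc{C} = \emptyset$ then $H = \triv$ and the hypotheses cannot hold, and every $Y$ with $\rist_G(Y) = \triv$ may be deleted from $\mc{C}$ without changing $H$ or invalidating any hypothesis (using $\rist_G(Y) \le \rist_G(W)$ whenever $Y \subseteq W$); so we may assume $\mc{C} \neq \emptyset$ and $\rist_G(Y) \neq \triv$ for every $Y \in \mc{C}$.

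The substance of the proof is to verify that $\mc{D}$ is a joinable compression family for $G$ and that $H$ is semi-transitive on $\mc{D}$. Conditions \ref{item:nonabelian}--\ref{item:semitransitive} are the soft part. Condition~\ref{item:semitransitive}: for $Y,Z \in \mc{C}$, $H$-compressibility of $Y$ gives $h \in H$ with $hY \subseteq Z$, so $h\,\rist_G(Y)\,h^{-1} = \rist_G(hY) \le \rist_G(Z)$, which simultaneously shows $H$ is semi-transitive on $\mc{D}$. Condition~\ref{item:disjointtranslates}: for $g \neq 1$, faithfulness yields $x$ with $gx \neq x$ and Hausdorffness an open $O \ni x$ with $gO \cap O = \emptyset$; compressing any $Y_0 \in \mc{C}$ into $O$ gives $Y := hY_0 \in \mc{C}$ with $Y \cap gY = \emptyset$, so $\rist_G(Y)$ commutes with $g\,\rist_G(Y)\,g^{-1} = \rist_G(gY)$. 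Condition~\ref{item:nonabelian}: a non-trivial element of $\rist_G(Y)$ moves a point of $Y$, so $Y$ contains two disjoint non-empty open sets $Y_1,Y_2$; compressing $Y$ into each gives commuting, trivially-intersecting conjugates of $\rist_G(Y)$ lying inside $\rist_G(Y)$, i.e.\ a copy of $\rist_G(Y) \times \rist_G(Y)$ inside $\rist_G(Y)$, which no non-trivial abelian group admits, so $\rist_G(Y)$ is non-abelian.

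Condition~\ref{item:joinable} is the main obstacle. Given $A = \rist_G(Y_A)$, $B = \rist_G(Y_B)$, $C = \rist_G(Y_C)$ with $[A,C] = \triv$ and $C \nleq B$, the plan is: (i) show $Y_A \cap Y_C = \emptyset$ --- else compressing $Y_C$ into the non-empty open set $Y_A \cap Y_C$ would place a non-trivial conjugate of $\rist_G(Y_C)$ inside $\rist_G(Y_A)$ while commuting with all of $\rist_G(Y_A)$, forcing it to be abelian and contradicting~\ref{item:nonabelian}; (ii) use $C \nleq B$ and the $H$-compressibility of the members of $\mc{C}$ to produce a non-empty open subset of $Y_C$ disjoint from $Y_B$ as well, so that $Y_A \cup Y_B$ is not dense; (iii) invoke the hypothesis to get $W \in \mc{C}$ with $Y_A \cup Y_B \subseteq W$ and take $D := \rist_G(W)$. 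Step (ii) is where I expect the real work to be: turning the purely group-theoretic failure $C \nleq B$ into an honest open witness outside $Y_B$ (and not merely outside the regularisation of $Y_B$) is exactly what the compressibility hypothesis is for, and making the bookkeeping go through may force one to pass from $\mc{C}$ to a better-behaved equivalent family before applying Theorem~\ref{thm:compressible_splittable_union}. Granting \ref{item:nonabelian}--\ref{item:joinable} and the semi-transitivity of $H$, parts (i) and (ii) of Theorem~\ref{thm:compressible_splittable_union} give that $\Der(H)$ is the monolith of $G$ and is simple.

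Finally, $G$ is almost simple once the monolith $\Der(H)$ is seen to be non-abelian: a simple abelian group is cyclic of prime order, but $\Der(\rist_G(Y)) \le \Der(H)$ is non-trivial for each $Y$ (by~\ref{item:nonabelian}), so it would then equal $\Der(H)$, whence $\Der(H) = \Der(\rist_G(Y)) \le \rist_G(Y)$ for every $Y \in \mc{C}$; taking $Y$ and a disjoint $H$-translate $gY$ (as in~\ref{item:disjointtranslates}), $\Der(H) \le \rist_G(Y) \cap \rist_G(gY)$, which is trivial since $Y \cap gY = \emptyset$ --- a contradiction. For the last assertion, $\Der(H) \trianglelefteq G$ acts faithfully, and a standard commutator argument (cf.\ \cite{MatuiSimple}, \cite{Nekra}, \cite{LeBoudecURS}), exploiting that $\Der(H)$ is normal and contains each $\Der(\rist_G(Y))$, upgrades $H$-compressibility of the members of $\mc{C}$ to $\Der(H)$-compressibility, from which the stated compressibility of the $\Der(H)$-action follows.
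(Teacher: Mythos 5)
Your overall route is the same as the paper's: check that $\mc{D}=\{\rist_G(Y)\mid Y\in\mc{C}\}$ is a joinable compression family for $G$ with $H$ semi-transitive, then quote Theorem~\ref{thm:compressible_splittable_union}(i)--(ii) (the paper does the verification once and for all in Lemma~\ref{lem:micro-supported_family} and Corollary~\ref{cor:micro-supported_compression} and simply cites them). But two of your verifications do not stand as written. First, your argument for \ref{item:nonabelian} rests on the assertion that no non-trivial abelian group contains a copy of its own square; this is false (e.g.\ $\bigoplus_{n\in\Nb}\Zb$ does), so exhibiting commuting, trivially-intersecting conjugates of $\rist_G(Y)$ inside $\rist_G(Y)$ proves nothing. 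The correct argument (the one behind Lemma~\ref{lem:micro-supported_family}) fixes $1\neq a\in\rist_G(Y)$, a point $x\in Y$ with $ax\neq x$ and an open $U\subseteq Y$ containing $x$ with $aU\cap U=\emptyset$, and compresses $Y$ into $U$: the resulting non-trivial $B\le\rist_G(Y)$ has support disjoint from that of $aBa\inv$, so $a$ commutes with no non-trivial element of $B$; hence $\rist_G(Y)$ has trivial centre and is non-abelian. (Also, your claim that $\mc{C}=\emptyset$ contradicts the hypotheses is backwards --- they then hold vacuously; non-emptiness is an implicit assumption --- and no pruning is needed: once some $\rist_G(Y_0)$ is non-trivial, compressing $Y_0$ into any $Y\in\mc{C}$ shows every $\rist_G(Y)$ is non-trivial.)

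The more serious gap is \ref{item:joinable}, which is the only condition with real content here and which you explicitly leave as a plan: you never produce, from $C\nleq B$, the witness that $Y_A\cup Y_B$ is not dense. The paper fills exactly this step (Lemma~\ref{lem:micro-supported_family}, hypotheses (iv)--(v), verified in Corollary~\ref{cor:micro-supported_compression}), with no need to change the family: since $\rist_G(Y_C)\nleq\rist_G(Y_B)$, some $r\in\rist_G(Y_C)$ moves a point $x\in Y_C\setminus Y_B$; one takes a neighbourhood $Y_0\subseteq Y_C$ of $x$ with $Y_0\cap Y_B=\emptyset$ and $rY_0\cap Y_0=\emptyset$, and compresses $Y_C$ into $Y_0$ to obtain $W\in\mc{C}$ with $W\subseteq Y_C$ and $W\cap Y_B=\emptyset$. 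Then $\rist_G(Y_A\cap W)$ lies in both $A$ and $\rist_G(W)\le C$, hence is abelian since $[A,C]=\triv$; if $Y_A\cap W$ were non-empty (it is open), compressing $Y_A$ into it would place a non-abelian conjugate of $A$ inside it --- your own step (i) trick --- a contradiction. So the non-empty open set $W$ misses $Y_A\cup Y_B$, the union is not dense, and the union hypothesis supplies $W_2\in\mc{C}$ with $Y_A\cup Y_B\subseteq W_2$; take $D=\rist_G(W_2)$. The regularity issue you flag (choosing $x$ off $\ol{Y_B}$, not merely off $Y_B$) is indeed the delicate point of this step, and it is precisely what you would have to settle rather than defer or dodge by modifying $\mc{C}$. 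Finally, your last paragraph does not establish the closing clause: even granting that members of $\mc{C}$ become $\Der(H)$-compressible, full compressibility concerns \emph{all} non-dense open subsets of $X$, and nothing in your sketch bridges that jump (the paper's one-line proof is silent on it as well); as it stands this part of your proposal is an appeal to authority rather than an argument.
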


The following special cases recover some of the prior results from the literature.

\begin{cor}[{See Section~\ref{sec:tree}}]\label{cor:tree}
	Let $T$ be a tree and let $G \le \Aut(T)$ act geometrically densely on $T$; suppose $G^{++}$ is nontrivial.  Then $G$ is almost simple with monolith $\Der(G^{++})$; the action of $\Der(G^{++})$ is also geometrically dense.
\end{cor}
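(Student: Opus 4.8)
The plan is to deduce the corollary from Theorem~\ref{thm:compressible_splittable_union}, applied to the $G$-invariant family $\mc{C} := \{\rist_G(\mathfrak{h}) \mid \mathfrak{h}\text{ a half-tree of }T\}$, for which $\grp{\mc{C}} = G^{++}$ by definition. (One could instead pass to the action of $G$ on the space of ends of $T$ and apply Corollary~\ref{cor:fully_compressible_splittable}, but the abstract theorem makes transparent that the relevant group is exactly $G^{++}$.) Two reductions are forced by the hypotheses: since $G^{++}\ne\triv$ and the action is geometrically dense, $T$ has no leaves --- otherwise the convex hull of the non-leaf vertices would be a proper $\Aut(T)$-invariant subtree, unless $T$ is a point or a single edge, in which cases $G^{++}=\triv$ --- and $T$ is not a line, since rigid stabilisers of half-trees of a line are trivial. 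It then remains to check that $\mc{C}$ is a joinable compression family for $G$ (conditions (A)--(D) of Definition~\ref{def:msc:subgroups}) and that $G^{++}=\grp{\mc{C}}$ is semi-transitive on $\mc{C}$; parts (i)--(ii) of Theorem~\ref{thm:compressible_splittable_union} then give that $\Der(G^{++})$ is the simple monolith of $G$, and since geometric density makes each $\rist_G(\mathfrak{h})$ contain a direct product of two non-trivial members of $\mc{C}$, condition~(A) forbids the monolith from having prime order, so it is non-abelian and $G$ is almost simple.

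Conditions (B) and (C) are routine. For (C): by Lemma~\ref{lem:tree:geom_dense}, given half-trees $\mathfrak{h}_1,\mathfrak{h}_2$ there is $g\in G$ with $g\mathfrak{h}_1\subseteq\mathfrak{h}_2$, whence $g\rist_G(\mathfrak{h}_1)g^{-1}=\rist_G(g\mathfrak{h}_1)\le\rist_G(\mathfrak{h}_2)$. For (B): every $g\in G\setminus\{1\}$ moves some half-tree off itself --- for elliptic $g$ one takes a half-tree at a $g$-moved vertex having a $g$-fixed neighbour (or one sitting across a $g$-inverted edge); for hyperbolic $g$ one takes a half-tree hanging off the axis at a vertex of degree $\ge 3$, which exists since $T$ is not a line --- and for such $\mathfrak{h}$ the subgroups $\rist_G(\mathfrak{h})$ and $g\rist_G(\mathfrak{h})g^{-1}=\rist_G(g\mathfrak{h})$ have disjoint supports, hence commute. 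For semi-transitivity of $G^{++}$ one observes that $G^{++}\normal G$ and that a non-trivial normal subgroup $N$ of a group acting geometrically densely on $T$ acts geometrically densely itself: otherwise the $G$-invariant set of $N$-fixed ends or the (unique, hence $G$-invariant) minimal $N$-invariant subtree would supply $G$ with a fixed end, an invariant line, or a proper invariant subtree. Lemma~\ref{lem:tree:geom_dense} applied to $G^{++}$ then yields (C) with conjugating elements in $G^{++}$, and the same observation applied to the monolith $\Der(G^{++})\normal G$ (non-trivial by Theorem~\ref{thm:compressible_splittable_union}(i)) yields the final assertion that the action of $\Der(G^{++})$ is geometrically dense.

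The substantive work is condition~(A) and the use of~(A) in verifying~(D): both rest on the rigid stabilisers $\rist_G(\mathfrak{h})$ being non-degenerate, and establishing this is, I expect, the main obstacle --- it is the only place where $G^{++}\ne\triv$ is needed beyond geometric density. Since $G^{++}\ne\triv$, some $\rist_G(\mathfrak{k})$ is non-trivial, so by Lemma~\ref{lem:tree:geom_dense} and conjugation every $\rist_G(\mathfrak{h})$ is non-trivial; the aim is to strengthen this to: each $\rist_G(\mathfrak{h})$ contains an element carrying a prescribed sub-half-tree of $\mathfrak{h}$ off itself. From this, (A) follows (if $a\in\rist_G(\mathfrak{h})$ moves a sub-half-tree $\mathfrak{h}'$ off itself then $a$ fails to commute with any $1\ne c\in\rist_G(\mathfrak{h}')$, since their supports become disjoint while $\supp(c)\ne\emptyset$), and more generally two rigid stabilisers of half-trees with non-empty intersection fail to commute. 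Granting the latter, (D) reduces to a dichotomy: for $\mathfrak{h}_1,\mathfrak{h}_2,\mathfrak{h}_3$ as in the hypothesis of~(D), the relation $[\rist_G(\mathfrak{h}_1),\rist_G(\mathfrak{h}_3)]=\triv$ forces $\mathfrak{h}_1\cap\mathfrak{h}_3=\emptyset$, hence $\mathfrak{h}_3\subseteq\mathfrak{h}_1^c$; if $\mathfrak{h}_1^c\cap\mathfrak{h}_2^c$ contains a half-tree $\mathfrak{k}$ then $D:=\rist_G(\mathfrak{k}^c)$ contains both $\rist_G(\mathfrak{h}_1)$ and $\rist_G(\mathfrak{h}_2)$ and we are done; and if not, then $\mathfrak{h}_1^c\cap\mathfrak{h}_2^c$ is empty or a path of degree-$2$ vertices, which forces every half-tree contained in $\mathfrak{h}_1^c$ but not in $\mathfrak{h}_2$ --- in particular $\mathfrak{h}_3$ --- to have its rigid stabiliser inside $\rist_G(\mathfrak{h}_2)$ (an automorphism fixing $\mathfrak{h}_1$ pointwise is forced to fix such a thin complement pointwise), contradicting $\rist_G(\mathfrak{h}_3)\nleq\rist_G(\mathfrak{h}_2)$, so the hypothesis of~(D) is vacuous.
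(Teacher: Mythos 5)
Your overall strategy---verify conditions (A)--(D) and semi-transitivity directly for the family of rigid stabilisers of half-trees and invoke Theorem~\ref{thm:compressible_splittable_union}---is viable, and your treatment of (B), (C), the reductions, and geometric density of non-trivial normal subgroups is fine. The gap is exactly where you flag it: the non-degeneracy claim on which your (A) and (D) rest is not proved, and in the strong form you state it is actually false. Take the Burger--Mozes group $U(F)$ on the $4$-regular tree with $F=\grp{(1\,2\,3\,4),(1\,3)}\cong D_4$: this is geometrically dense with $G^{++}\neq\triv$, but every element of the rigid stabiliser of the half-tree away from a colour-$1$ edge at its root $r$ has local action at $r$ in the stabiliser of $1$ in $F$, namely $\{\mathrm{id},(2\,4)\}$, which fixes $3$; so no element of $\rist_G(\mathfrak{h})$ can move the prescribed sub-half-tree hanging off $r$ in direction $3$ off itself. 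Worse, the consequence you actually use in verifying (D)---that $[\rist_G(\mathfrak{h}_1),\rist_G(\mathfrak{h}_3)]=\triv$ forces $\mathfrak{h}_1\cap\mathfrak{h}_3=\emptyset$---is also false: subdivide every edge of the $4$-regular tree, let $m$ be a subdivision vertex with neighbours $u,v$, and let $\mathfrak{h}_1,\mathfrak{h}_3$ be the half-trees at $(u,m)$ and $(m,v)$ each containing $m$; any element fixing the $u$-side pointwise must also fix $m$ and $v$, so the two rigid stabilisers have disjoint supports and commute, although $\mathfrak{h}_1\cap\mathfrak{h}_3=\{m\}$. Thus the first step of your (D) argument is unjustified: commuting together with (A) only yields that $\mathfrak{h}_1\cap\mathfrak{h}_3$ contains no half-tree, and your case analysis would need to be redone around such degree-two corridors. (Condition (A) itself is true, but via the weaker statement that every non-trivial $a\in\rist_G(\mathfrak{h})$, being elliptic, moves \emph{some} half-tree contained in $\mathfrak{h}$ off itself, so that $\rist_G(\mathfrak{h})$ has trivial centre; the ``prescribed'' version is not needed for (A) and is not available.)

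For comparison, the paper avoids these pathologies by working on $\partial T$ rather than on $T$: geometric density with $G^{++}\neq\triv$ forces $T$ to be leafless with no isolated ends, so the rigid stabiliser of a half-tree coincides with the rigid stabiliser of its set of ends, the boundaries of half-trees form a base of clopen subsets of $\partial T$, and Corollary~\ref{cor:micro-supported_compression} together with Lemma~\ref{lem:tree:geom_dense} then delivers a joinable compression family at once; joinability is checked at the level of ends, where the thin overlaps above disappear (the boundaries in both counterexamples are disjoint or coincide with those of smaller half-trees). If you want to stay on the tree, you must either prove (D) allowing for paths of degree-two vertices in the intersections and complements, or pass to end-boundaries of half-trees as the paper does.
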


M\"{o}ller and Vonk in \cite{MollerVonk} study groups $G\leq \Aut(T)$ such that $G^{++}$ is non-trivial, and they call this latter condition having \emph{Property H} (for Half-trees). 
Theorem 6 of that paper states that a closed subgroup $G\leq \Aut(T)$ with Property H that acts geometrically densely has $\ol{G^{++}}$ as the closed monolith; in particular, $\ol{G^{++}}$ is topologically simple. 
Assuming that $G$ is locally compact (this is automatic if $T$ is locally finite) we can strengthen the conclusion of that theorem and of Corollary~\ref{cor:tree}.

\begin{cor}[{See Section~\ref{sec:open_monolith}}]\label{cor:tree:simpleG++}
Let $T$ be a tree and let $G \le \Aut(T)$ be closed, locally compact and act geometrically densely on $T$.  Then $G^{++}$ is trivial or abstractly simple.
\end{cor}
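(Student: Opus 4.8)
The plan is to deduce Corollary~\ref{cor:tree:simpleG++} from the earlier (abstract) Corollary~\ref{cor:tree} together with an open-monolith result for locally compact groups --- morally, the one alluded to in the abstract and presumably proved in Section~\ref{sec:open_monolith}. Concretely, suppose $G^{++}$ is non-trivial. Then the hypotheses of Corollary~\ref{cor:tree} are met, so $G$ is almost simple with monolith $M := \Der(G^{++})$, and the action of $M$ on $T$ is geometrically dense. The goal is to show $M = G^{++}$ and that $G^{++}$ is (abstractly) simple; since $M$ is already the monolith, it suffices to show $M$ is \emph{open} in $G$, because an open normal subgroup of a topologically simple-by-anything situation forces the monolith to coincide with $G^{++}$ and to be abstractly simple. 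Let me expand on why openness is the crux.

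First I would recall the general principle (the ``simple-by-discrete'' mechanism): if $G$ is a locally compact group, $M \normal G$ is the monolith, and one can show $M$ is open, then $M$ itself is abstractly simple. Indeed, a normal subgroup $N$ of $M$ that is normal in $G$ must contain $M$ (by minimality) or be trivial; but a priori $N$ need only be normal in $M$. Here one uses that an open subgroup of a t.d.l.c.\ group is again t.d.l.c., and that for micro-supported/geometrically dense actions the rigid stabilisers of half-trees inside $M$ generate $M$ and interact well with normality --- this is exactly the content packaged into Theorem~\ref{thm:compressible_splittable_union}(ii), which already gives $\Der(H)$ \emph{simple} for $H = \grp{\mathcal C}$ when $H$ is semi-transitive. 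For the tree case, $G^{++}$ is semi-transitive on the family $\{\rist_G(\text{half-tree})\}$ precisely by geometric density (Lemma~\ref{lem:tree:geom_dense}), so $\Der(G^{++})$ is simple by that theorem. So the only thing left to upgrade Corollary~\ref{cor:tree} to Corollary~\ref{cor:tree:simpleG++} is to show $G^{++} = \Der(G^{++})$, i.e.\ that $G^{++}$ is perfect, or equivalently that the monolith equals all of $G^{++}$.

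The mechanism for that, when $G$ is locally compact, is to show $M = \Der(G^{++})$ is \emph{open} in $G$: then $G^{++}/M$ is discrete; but $G^{++}$ is generated by rigid stabilisers of half-trees, and for each half-tree $\mathfrak h$ the subgroup $\rist_G(\mathfrak h)$ is, modulo $M$, both open (since $M$ is open) and --- because $M$ already contains $\Der(\rist_G(\mathfrak h))$ and acts geometrically densely --- one can push any element of $\rist_G(\mathfrak h)$ into $M$ by a commutator trick using a second half-tree disjoint from $\mathfrak h$, exploiting property (D) (joinability) and the shift/leafless Lemma~\ref{lem:leafless}. This is where part (iii) of Theorem~\ref{thm:compressible_splittable_union} enters: if one can verify that $\{\rist_G(\mathfrak h)\}$ is a \emph{shift-joinable} compression family for $G^{++}$ --- and on a tree this is exactly Tits' observation that an element translating along an axis conjugates a half-tree rigid stabiliser to a properly nested one, yielding the infinite-\emph{product} homomorphism $(-)^\bullet$ of condition~\ref{item:translation} --- then $\Der(G^{++}) = G^{++}$ outright, and it is the simple monolith. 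The openness of $M$ in the locally compact setting is then what promotes ``$G^{++}$ equals its own derived subgroup and is the monolith'' to ``$G^{++}$ is abstractly simple'' rather than merely topologically simple, closing the gap with \cite{MollerVonk}.

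The main obstacle I anticipate is verifying condition~\ref{item:translation} (shiftability) on a general, possibly non-locally-finite tree: one needs, for each half-tree $\mathfrak h$, an element $g \in G^{++}$ (not merely in $G$) acting as a hyperbolic translation whose axis ``points into'' $\mathfrak h$, so that the nested sequence $g^n \mathfrak h g^{-n}$ is eventually strictly decreasing and the rigid stabilisers along it commute pairwise, giving the direct-product embedding. Producing such a $g$ \emph{inside $G^{++}$} --- rather than inside $G$ --- is the delicate point; geometric density gives hyperbolic elements in $G$, and one must combine these with half-tree rigid stabilisers (which lie in $G^{++}$ by definition) to manufacture the required translation in $G^{++}$, or else argue directly with the weak version of (E) from Lemma~\ref{lem:leafless} and part (ii) of Theorem~\ref{thm:compressible_splittable_union} plus an independent argument that openness of $M$ forces $G^{++}$ perfect. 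If the shiftable route is obstructed, the fallback is: apply Corollary~\ref{cor:tree} to get the simple monolith $M = \Der(G^{++})$, invoke the Section~\ref{sec:open_monolith} theorem to conclude $M$ is open in $G$ (this is where local compactness is essential and is presumably the substantive input), and then a short argument shows a perfect open normal subgroup generated by half-tree rigid stabilisers must exhaust $G^{++}$, whence $G^{++} = M$ is abstractly simple; if instead $G^{++}$ is trivial we are in the other case of the dichotomy.
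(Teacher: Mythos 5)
Your reduction via Corollary~\ref{cor:tree} matches the paper's first step: assume $G^{++}\neq\triv$, obtain the simple monolith $\Der(G^{++})$ acting geometrically densely, and reduce to showing $G^{++}$ is perfect. But both mechanisms you propose for that last step have genuine gaps. For the shiftable route, nesting is not enough: if $g$ translates a half-tree $T_a$ properly into itself, the conjugates $g^nAg^{-n}$ of $A=\rist_G(T_a)$ are supported on \emph{nested} half-trees and do not commute, so they cannot furnish the homomorphism from the unrestricted product $\prod_{i\ge 1}A$ required by condition~\ref{item:translation}; as the paper itself notes, that condition is in essence Tits' independence property (P), which is precisely the hypothesis this corollary is designed to avoid, and you never indicate where closedness or local compactness would enter to rescue it (one would at least need disjointly supported translates, convergence of infinite products inside $G$, and an argument that the limits lie in $G^{++}$ rather than merely in $\ol{G^{++}}$). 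For the fallback route, Theorem~\ref{thm:loc_decomp_compressible} is not applicable: it requires a faithful, minimal, \emph{locally decomposable} action on a compact zero-dimensional space, and no such action is supplied by the tree hypotheses (local decomposability of the boundary action is again an independence-type condition). Moreover, even granting that $M=\Der(G^{++})$ were open, your claim that openness forces each half-tree rigid stabiliser into $M$ (hence $G^{++}$ perfect) is asserted rather than proved: openness only makes $M\cap\rist_G(T_a)$ open in $\rist_G(T_a)$, which does not give containment.

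The missing idea is the contraction-group mechanism, which is where local compactness actually enters and which the paper uses in place of any openness statement. By Lemma~\ref{lem:contraction}, a normal (not necessarily closed) subgroup $A$ of a \tdlc group satisfies $\ol{\con_G(a)}\le A$ for every $a\in A$. Given a half-tree $T_a$, geometric density of $\Der(G^{++})$ together with Lemma~\ref{lem:tree:geom_dense}(iii) produces a translation $g\in\Der(G^{++})$ whose axis contains the defining edge and which moves $T_a$ properly inside itself, so $\bigcap_{n\ge 0}g^n(T_a)=\emptyset$; since $G$ carries the permutation topology, $\rist_G(T_a)\le\con_G(g)$, whence $\rist_G(T_a)\le\Der(G^{++})$ by Lemma~\ref{lem:contraction}. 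As the half-tree was arbitrary, $G^{++}\le\Der(G^{++})$, and simplicity follows from Corollary~\ref{cor:tree}. Note that the paper never shows, and does not need, that the monolith is open in this setting.
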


\begin{cor}[{See Section~\ref{sec:pw_full}}]\label{cor:Matui}
	Let $X$ be an  infinite zero-dimensional Hausdorff space and let $G$ be the piecewise full group of some group of homeomorphisms of $X$.  Suppose that the action of $G$ is compressible and let $H$ be the group generated by rigid stabilisers of $G$-compressible clopen sets.   Then $G$ is almost simple with monolith $\Der(H)$.  If $X$ is compact and $G$ is minimal, then $G = H$ and the action of $\Der(G)$ is fully compressible.
\end{cor}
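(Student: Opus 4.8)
The plan is to reduce the statement to Corollary~\ref{cor:fully_compressible_splittable}. By compressibility of the action and zero-dimensionality we may fix a non-empty $G$-compressible clopen set $U$, and we take $\mc{C}$ to be the $G$-invariant family of all non-empty $G$-compressible clopen subsets of $X$; then $H=\grp{\rist_G(Y)\mid Y\in\mc{C}}$ is exactly the group in the statement. The first thing to record is that a non-empty clopen set is $G$-compressible if and only if it is contained in some $G$-translate $gU$ of $U$: one implication follows by compressing into $U$, the other by subsequently compressing $U$ into an arbitrary non-empty open set. In particular $\bigcup_{g\in G}gU$ is dense and $\mc{C}$ is closed under passing to non-empty clopen subsets.

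To invoke Corollary~\ref{cor:fully_compressible_splittable} we must verify three things: (i) each $\rist_G(Y)$ with $Y\in\mc{C}$ is non-abelian; (ii) each member of $\mc{C}$ is $H$-compressible; and (iii) for $Y,Z\in\mc{C}$, either $Y\cup Z$ is dense, or $Y\cup Z\subseteq W$ for some $W\in\mc{C}$. Item (i) is easy: since $G$ is a piecewise full group, $\rist_G(Y)$ is the piecewise full group of the induced action on $Y$, and because $Y$ is infinite it contains two non-commuting ``swaps'' of disjoint clopen subsets of $Y$. Item (iii) is the crux. Given $Y\subseteq aU$ and $Z\subseteq bU$ with $Y\cup Z$ not dense, we want $Y\cup Z$ to be $G$-compressible, equivalently to lie inside a translate of $U$. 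After replacing $Z$ by $Z\setminus Y$ we may assume $Y\cap Z=\emptyset$. Using that $Z$ is $G$-compressible, and using the non-density of $Y\cup Z$ to guarantee there is room, we relocate $Z$ by some $g\in G$ to a clopen subset of $aU$ disjoint from $Y$, so that $Y\cup gZ\subseteq aU$ is $G$-compressible; a suitable composition of piecewise full swaps then transports $Y\cup Z$ onto a subset of $Y\cup gZ$, so $Y\cup Z$ is $G$-compressible as well. Granting (iii), item (ii) follows: a $G$-compression of $Y\in\mc{C}$ into a non-empty clopen set $O$ can be realised as a composition of two piecewise full swaps, the first moving $Y$ onto a small disjoint copy of itself and the second moving that copy into $O$; by (iii) each swap is supported on a $G$-compressible clopen set and hence lies in $H$.

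The difficulty is concentrated in (iii): the swaps must be made well defined, which requires some care to handle the various overlaps among the clopen sets and their translates, and this is where zero-dimensionality is used essentially. (It is also here that one needs a mild non-degeneracy assumption on the action — for instance that every non-empty open set is infinite, which both makes the infiniteness of members of $\mc{C}$ used in~(i) automatic and leaves enough room for the constructions; any $G$-compressible clopen set with trivial rigid stabiliser may in any case be deleted from $\mc{C}$ without affecting $H$.) With (i)--(iii) established, Corollary~\ref{cor:fully_compressible_splittable} gives that $G$ is almost simple with monolith $\Der(H)$.

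Finally suppose that $X$ is compact and $G$ is minimal. Then $\{gU\mid g\in G\}$ is an open cover of $X$, which by compactness has a finite subcover; refining this to a clopen partition $X=V_1\sqcup\cdots\sqcup V_n$ with each $V_i\subseteq g_iU$ shows $V_i\in\mc{C}$ for all $i$. For any proper non-empty clopen $Y$, each $Y\cap V_i$ lies in $\mc{C}$, and applying (iii) repeatedly (every partial union is contained in the proper clopen set $Y$, hence is not dense) shows that $Y=\bigcup_i(Y\cap V_i)$ lies in $\mc{C}$. So $\mc{C}$ consists of exactly the proper non-empty clopen subsets of $X$; since a piecewise full group on a zero-dimensional space is generated by the rigid stabilisers of its proper clopen subsets (decompose an element along a clopen partition into pieces realised by such rigid stabilisers), it follows that $H=G$. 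Corollary~\ref{cor:fully_compressible_splittable}, applied with this $\mc{C}$, now also yields that the action of $\Der(H)=\Der(G)$ is fully compressible, completing the proof.
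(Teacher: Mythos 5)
Your overall strategy is the paper's own: you verify precisely the three facts proved in Lemma~\ref{lem:pw_full} (the union of two $G$-compressible clopen sets is everything or compressible; compressible clopen sets are $H$-compressible via piecewise swaps; compactness plus minimality gives full compressibility and $G=H$) and then invoke Corollary~\ref{cor:fully_compressible_splittable}. The problem is that the step you yourself call the crux, your item (iii), has a genuine gap as outlined. You relocate $Z$ ``to a clopen subset of $aU$ disjoint from $Y$'', justified by the non-density of $Y\cup Z$. But non-density of $Y\cup Z$ only guarantees room inside $(Y\cup Z)^c$; it says nothing about $aU\setminus Y$, which may well be empty (for instance $Y=aU$), and the existence of a translate of $U$ containing $Y$ with spare room for a disjoint copy of $Z$ is essentially the statement you are trying to prove, so this cannot be fixed by re-choosing $a$. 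The paper's proof of Lemma~\ref{lem:pw_full}(i) builds the room in the complement instead: choose $g_1$ with $g_1Y\subseteq (Y\cup Z)^c$, then $g_2,g_3$ compressing $Y$ and $Z$ into \emph{disjoint} clopen subsets of $g_1Y$; the element $h=\sigma(g_2,Y)\sigma(g_3,Z)$ is then well defined (each swap has source and target disjoint by construction, and the supports do not interfere), carries $Y\cup Z$ into $g_1Y$, and $g_1\inv h$ carries it into $Y$, so $Y\cup Z$ is compressible. Your deferred ``care with overlaps'' is therefore not mere bookkeeping: without this rearrangement the construction you describe does not go through.

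Two smaller points. Your justification that a piecewise full group is generated by rigid stabilisers of proper clopen sets (``decompose an element along a clopen partition into pieces'') is invalid as stated, since the restriction of $g$ to a piece, extended by the identity, is generally not a homeomorphism; the correct argument is the swap factorisation of Lemma~\ref{lem:pw_full:split}: for $g\neq 1$ pick clopen $Y$ with $Y\cap gY=\emptyset$ and $Y\cup gY\neq X$ and write $g=(g\sigma(g,Y)\inv)\,\sigma(g,Y)$, both factors lying in rigid stabilisers of proper clopen sets. Also, your item (i) is not among the hypotheses of Corollary~\ref{cor:fully_compressible_splittable}, and in any case ``$Y$ is infinite'' alone does not produce non-commuting swaps inside $Y$; what makes rigid stabilisers non-abelian here is compressibility, which lets $G$ move clopen pieces of $Y$ within $Y$ (this is how Lemma~\ref{lem:micro-supported_family} argues). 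Your non-degeneracy caveat (compressible clopen sets should contain two disjoint non-empty clopen subsets) is a fair observation --- the paper's construction tacitly uses the same thing --- but note that deleting compressible clopen sets with trivial rigid stabiliser from $\mc{C}$ does not by itself repair the argument in such degenerate situations.
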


	The abelianisation of $G$ as above is only known for certain cases and is usually and \emph{ad hoc} argument.
	For example, when $G$ is the full group of a shift of finite type, a formula for the abelianisation is given in \cite{MatuiSimple}. 
	For the non-discrete analogues studied in \cite{Lederle}, it is shown there that the abelianisation is the image of that of the full group of the corresponding shift of finite type. This image may be proper, since all Neretin groups are simple (\cite{Kapoudjian}) while not all the corresponding Higman--Thompson groups are simple. 
	Theorem~\ref{thm:compressible_splittable_union}\ref{item:joinable_with_translation_implies_perfect} cannot be used as is in these cases because  condition \ref{item:translation} may not be satisfied for the family of subgroups used to prove Corollary \ref{cor:Matui}.

In the absence of joinability, a compression family still puts restrictions on the normal subgroup structure of $G$: see Lemma~\ref{lem:double_commutator:subgroups:bis} and Proposition~\ref{prop:compressible_splittable}.  However, under these more general hypotheses it is not clear when $G$ has a simple normal subgroup. Indeed, the authors do not know the answer to the following:

\begin{que}
Let $X$ be an infinite compact zero-dimensional space, let $G \le \Homeo(X)$ act minimally, and suppose that $G$ is generated by elements of $G$-compressible support.  Is it always true that $\Der(G)$ is simple?
\end{que}

\subsection{Isometric actions and number of ends}

There is a notable difference in the large-scale geometry of the groups satisfying the hypotheses of Corollary~\ref{cor:Matui}, as opposed to those occurring in Corollary~\ref{cor:tree}.  While the latter clearly have a faithful action of general type on a hyperbolic space (namely, on the tree $T$), we find that in the case of piecewise full groups, there are significant obstructions to having interesting actions on hyperbolic spaces.  A group $G$ has \defbold{property $(\mathrm{NGT})$} if no action of $G$ by isometries on a hyperbolic space is of general type; see Definition~\ref{def:hyperbolic} for details.

By adapting arguments of Balasubramanya, Fournier-Facio and Genevois (\cite{BFG}, \cite{Genevois}), we obtain a sufficient condition in terms of strong compression families for $G$ to have property $(\mathrm{NGT})$.  Here is a special case for piecewise full groups.

\begin{thm}[{See Corollary~\ref{cor:alternatable_NGT}}]\label{thm:alternatable_NGT}
	Let $X$ be a perfect zero-dimensional compact Hausdorff space, let $G$ be a piecewise full group of homeomorphisms of $X$, and let $\Der(G) \le G_0 \le G$.  Then $G_0$ has property $(\mathrm{NGT})$. 
\end{thm}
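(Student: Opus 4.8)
The plan is to reduce to proving that $D:=\Der(G)$ has property $(\mathrm{NGT})$, to exhibit an alternatable (``strong'') compression family $\mc{C}$ with $\grp{\mc{C}}=D$, and then to argue along the lines of Balasubramanya--Fournier-Facio--Genevois. For the reduction, suppose $\Der(G)\le G_0\le G$ and $G_0$ acts of general type on a hyperbolic space $Y$. Choose independent loxodromics $a,b\in G_0$; after replacing them by suitable powers, $F:=\grp{a,b}$ is free of rank two via ping-pong on $\partial Y$, so $\Der(F)$ contains two independent loxodromic elements for the action on $Y$ (coming from cyclically reduced words such as commutators of the free generators). As $F\le G_0\le G$ we have $\Der(F)\le\Der(G_0)\le\Der(G)=D$, and as $D\le G_0$ the $G_0$-action on $Y$ restricts to $D$; hence $D$ acts of general type on $Y$. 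So it suffices to prove that $D$ has $(\mathrm{NGT})$.

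To construct the family one first needs a lemma --- necessary because, in contrast with Corollary~\ref{cor:Matui}, minimality and compressibility of the $G$-action are not assumed --- that for a piecewise full group $G$ over a perfect zero-dimensional compact space, $D$ is generated by ``alternating-type'' elements supported on clopen sets; concretely, let $\mc{C}$ be the family of alternating full subgroups $A_Y\le D$ supported on clopen sets $Y$ that $G$ partitions into several $G$-translates of a common clopen set, so that $\grp{\mc{C}}=D$ and $\mc{C}$ is conjugation-invariant in $G$ (this should follow from the local, groupoid-theoretic description of piecewise full groups and of their derived subgroups). One then checks the compression-family axioms: each $A_Y$ is perfect, since over a perfect space the number of pieces can be taken arbitrarily large and the relevant ``infinite-alternating-like'' full groups are perfect; $\mc{C}$ is joinable, since a union of two clopen sets not equal to $X$ is clopen and $A_{Y\cup Z}\ge\grp{A_Y,A_Z}$; and $\mc{C}$ is alternatable/shiftable as in \ref{item:translation}, since, $X$ being perfect, any clopen $Y$ admits infinitely many pairwise disjoint clopen copies carried onto it by elements of $G$, and stringing the associated alternating moves together produces $g\in D$ and a homomorphism $(-)^\bullet\colon\prod_{i\ge1}A_Y\to D$ realising the shift by conjugation by $g$.

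The core step is the hyperbolic-action argument: assume $D=\grp{\mc{C}}$ acts on a hyperbolic space $Y$ of general type and seek a contradiction. Fix $A=A_Y\in\mc{C}$ and its shift datum; let $P\le D$ be the image of $(-)^\bullet$, so that $A$ (factor $1$) commutes with each $g^nAg^{-n}$ (factor $n+1$, $n\ge1$) and $gPg^{-1}\le P$. Claim: $A$ is elliptic on $Y$. Since $A$ commutes with its conjugate $gAg^{-1}$, and two commuting subgroups in a hyperbolic-space action cannot both be non-elementary (independent loxodromics from the two factors would be forced onto a common axis), both are elementary, so the limit set $\Lambda(A)$ has at most two points; it cannot have exactly two, since then $A$ would contain a loxodromic element and hence, being perfect, admit a nonzero homomorphism to $\mathbb{R}$, namely translation length along its axis. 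If $\Lambda(A)=\emptyset$ then $A$ is elliptic; if $\Lambda(A)=\{\xi\}$, the Busemann homomorphism $A\to\mathbb{R}$ at $\xi$ is zero by perfectness, so $A$ is horocyclic at $\xi$, and one uses the shift again ($g$, and hence $P$, also fix $\xi$, and the constraints imposed by the unrestricted product $\prod_{i\ge1}A$ together with the structure of the alternating full groups force bounded orbits) to conclude $A$ is elliptic after all. Finally, joinability propagates this: by \ref{item:joinable} and the join-and-commute machinery behind Theorem~\ref{thm:compressible_splittable_union}, any finitely many members of $\mc{C}$ lie in a single member of $\mc{C}$, which is elliptic; so $D=\grp{\mc{C}}$ is elliptic, contradicting generality of type. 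Together with the reduction, this proves $(\mathrm{NGT})$ for every $G_0$ in the range.

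The step I expect to be the main obstacle is the elimination of the horocyclic case in the claim that $A$ is elliptic --- the analogue, in this axiomatic setting, of the Balasubramanya--Fournier-Facio--Genevois lemma on commuting conjugate families: a perfect group sitting in $D$ as the first factor of a conjugation-shiftable unrestricted direct product must act elliptically in every hyperbolic action, the delicate point being precisely the focal/horocyclic configuration (aggravated by the possible non-properness of $Y$, so that limit sets need not be compact and the usual compactness arguments are unavailable). A secondary obstacle is the opening lemma that $D$ genuinely is generated by an alternatable family of this kind for an arbitrary piecewise full group over a perfect zero-dimensional compact space, in the absence of minimality; I would isolate and prove this as a statement about the derived subgroups of piecewise full groups.
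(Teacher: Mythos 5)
There is a genuine gap, and it sits exactly where you would need it not to be: in the final propagation step of your ``core step''. First, the claim that joinability lets you place any finitely many members of $\mc{C}$ inside a single member is false: condition \ref{item:joinable} only produces a join of $A$ and $B$ when there is a witness $C$ with $[A,C]=\triv$ and $C \nleq B$, and no such join exists once the supports cover $X$ (e.g.\ $\rist_G(U)$ and $\rist_G(U^c)$ lie in no common rigid stabiliser of a proper clopen set). Second, and more fundamentally, even granting that every member of $\mc{C}$ acts elliptically, you cannot conclude that $D=\grp{\mc{C}}$ has no loxodromics or is not of general type: a product of elliptic isometries can be loxodromic (already two involutions in the infinite dihedral group), so ``generated by elliptic subgroups'' proves nothing by itself. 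This is precisely the difficulty that the quantitative hypotheses in the Balasubramanya--Fournier-Facio--Genevois approach are designed to overcome, and it is how the paper proceeds: it never analyses a hyperbolic action directly, but instead verifies the purely algebraic hypotheses of Genevois' criterion (\cite[Theorem~1.1]{Genevois}) via Theorem~\ref{thm:NGT} --- every element of $G_0$ is a product of three elements of $S\cap G_0$ (Claim 2, using the family $\{\rist_G(gU^c)\mid g\in G\}$ and a finite ``cover'' family $\mc{S}$), commuting-conjugate chains from Lemma~\ref{lem:leafless}, and the $(\mathbf{3T})$-type condition.

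Two further points. Your elimination of the non-elliptic elementary cases is also incomplete: perfectness of $A$ does not rule out a lineal action, because translation length/Busemann along a quasi-line is in general only a homogeneous quasimorphism, not a homomorphism to $\Rb$; and you concede yourself that the horocyclic case is unresolved --- in the paper this whole analysis is outsourced to Genevois' theorem, whose hypotheses are what Theorem~\ref{thm:NGT} checks. Finally, your ``opening lemma'' (that $\Der(G)$ of an arbitrary piecewise full group over a perfect space is generated by a shiftable family of alternating-type subgroups, without any compressibility assumption) is a substantial unproved claim; the paper's actual proof (Corollary~\ref{cor:alternatable_NGT}) assumes the action is compressible and uses Lemma~\ref{lem:pw_full} to get full compressibility, which is what makes the transitivity conditions $(\mathbf{2T})$, $(\mathbf{3T})$ and the construction of the finite set $\mc{S}$ go through. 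The reduction from $G_0$ to $\Der(G)$ in your first paragraph is fine but unnecessary in the paper's approach, which handles all $G_0$ with $\Der(G)\le G_0\le G$ simultaneously.
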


For compactly generated locally compact groups, including finitely generated discrete groups, having more than one end is associated with certain loxodromic actions on trees.  In particular, we see the following.

\begin{cor}\label{cor:alternatable_NGT:ends}
Let $G$ and $G_0$ be as in Theorem~\ref{thm:alternatable_NGT}, such that $G_0$ is equipped with a locally compact group topology.  Suppose $G_0$ is compactly generated with no non-trivial continuous homomorphism to $\Rb$.  Then $G_0$ has at most one end.
\end{cor}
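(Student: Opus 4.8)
The plan is to suppose, for contradiction, that $G_0$ has more than one end and to pit property $(\mathrm{NGT})$ (Theorem~\ref{thm:alternatable_NGT}) against the structure theory of ends of compactly generated locally compact groups. Such a group has $0$, $1$, $2$ or infinitely many ends, so it is enough to rule out two ends and infinitely many ends. At two points I will use that $\Der(G)$ is perfect, which for a piecewise full group of a perfect zero-dimensional compact space is a general structural feature (and, in the compressible case, is immediate from Corollary~\ref{cor:Matui}).

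\emph{Infinitely many ends.} By the locally compact analogue of Stallings's theorem (Abels; see also Cornulier--de la Harpe), an infinitely-ended compactly generated locally compact group splits essentially over a compact subgroup, so it acts continuously on a simplicial tree $T$ by an action of general type: no fixed vertex, no fixed end, and no invariant pair of ends (equivalently, $G_0$ contains a ping-pong pair of hyperbolic isometries of $T$). As a tree is $0$-hyperbolic, this contradicts $(\mathrm{NGT})$.

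\emph{Two ends.} Then $G_0$ admits a continuous, proper, cocompact action by isometries on the real line, and so a continuous homomorphism $\rho\colon G_0 \to \mathrm{Isom}(\Rb)$ whose image acts cocompactly on $\Rb$; in particular $\rho$ is non-trivial. The group $\mathrm{Isom}(\Rb)\cong \Rb\rtimes\Zb/2\Zb$ has abelian derived subgroup, hence has no non-trivial perfect subgroup, so $\rho(\Der(G))=1$; therefore $\ker\rho\supseteq\Der(G)\supseteq\Der(G_0)$ and $\rho(G_0)$ is abelian. But an abelian subgroup of $\mathrm{Isom}(\Rb)$ with unbounded orbits on $\Rb$ must consist of translations --- two distinct reflections, and likewise a reflection together with a non-trivial translation, generate a non-abelian group --- so $\rho$ is a non-trivial continuous homomorphism $G_0\to\Rb$, contradicting the hypothesis. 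Hence $G_0$ has at most one end.

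The delicate point is the appeal to perfectness of $\Der(G)$: property $(\mathrm{NGT})$ by itself cannot exclude the two-ended ``infinite dihedral'' possibility, since a lineal action on $\Rb$ is never of general type and an infinite dihedral group has no homomorphism to $\Rb$; it is perfectness of $\Der(G)$ --- which forces $\rho(G_0)$ to be abelian and hence to act by translations --- that closes this gap.
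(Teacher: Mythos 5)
Your two-ended analysis is fine and is essentially the same reduction the paper makes (there it is packaged as: $G_0/\ol{\Der(G)}$ is abelian with no non-trivial continuous homomorphism to $\Rb$, hence has at most one end). The genuine gap is in the infinitely-ended case. It is not true for locally compact groups that infinitely many ends forces an action on a tree of general type. The locally compact analogue of Stallings' theorem (Abels) does give a splitting over a compact open subgroup, but unlike the discrete case the HNN splitting may be \emph{ascending} over a compact open subgroup, and then the action on the Bass--Serre tree is focal, not of general type. A concrete counterexample to your claim is $\Qb_p \rtimes_p \Zb$ (ascending HNN extension of $\Zb_p$ along $x \mapsto px$): it is compactly generated, totally disconnected, quasi-isometric to the $(p+1)$-regular tree and so has infinitely many ends, yet it is amenable and hence has property $(\mathrm{NGT})$ --- every loxodromic in any of its hyperbolic actions shares the fixed end. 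So $(\mathrm{NGT})$ alone cannot contradict ``infinitely many ends'', and your argument never uses the hypothesis on homomorphisms to $\Rb$ in this branch, which is a sign something is missing.

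The paper closes exactly this gap: after using Houghton's results (together with the fact that $\ol{\Der(G)}$ is non-compact and $G_0$ has no non-trivial compact or abelian normal subgroup, which also yields total disconnectedness --- a point you would need anyway before invoking Abels-type splittings) to reduce to the infinitely-ended totally disconnected case, it invokes the dichotomy of \cite[Proposition~3.6]{CMR}: such a group admits a continuous action on a locally finite tree that is either of general type or \emph{focal}. Property $(\mathrm{NGT})$ from Theorem~\ref{thm:alternatable_NGT} kills the general type case, and the focal case is excluded because a focal action yields a continuous surjection onto $\Zb$, i.e.\ a non-trivial continuous homomorphism to $\Rb$, contradicting the hypothesis. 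To repair your proof you must replace ``splits, hence acts with general type'' by this general-type/focal dichotomy and rule out the focal branch with the homomorphism hypothesis; as written, the step fails.
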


\subsection{A sufficient condition for a normal subgroup to be open}

The second main theorem concerns group topology, in particular the structure of \tdlc groups.  Given a topological group $G$ acting continuously on a compact zero-dimensional space $X$, we say the action is \defbold{locally decomposable} if for every partition $\mc{P}$ of $X$, the subgroup $\grp{\rist_G(P) \mid P \in \mc{P}}$ is open in $G$.
	
\begin{thm}[{See Section~\ref{sec:open_monolith}}]\label{thm:loc_decomp_compressible}
	Let $G$ be a \tdlc group acting faithfully, minimally and locally decomposably on a compact zero-dimensional space $X$, and let $A$ be a normal subgroup of $G$ (not assumed to be closed).  Suppose that the action of $A$ on $X$ is compressible.  Then $A$ is open in $G$ and the action of $A$ on $X$ is minimal and locally decomposable.
\end{thm}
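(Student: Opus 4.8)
The plan is to reduce everything to a micro-support statement for $A$ and then use local decomposability to upgrade it to openness. First some harmless reductions: if $X$ is finite then $G$ is finite and the statement is trivial, so we may assume $X$ is infinite and hence (being a minimal compact space) perfect; and if $G$ is discrete then $A$ is automatically open and the local decomposability conclusion for $A$ is vacuous, so for the genuinely topological parts we may assume $G$ is non-discrete. Write $\mc{C}^{\ast}$ for the family of non-empty $A$-compressible clopen subsets of $X$. Since $A\normal G$, any $G$-translate of an $A$-compressible set is $A$-compressible, so $\mc{C}^{\ast}$ is $G$-invariant; it is obviously closed under passing to non-empty clopen subsets; and since $G$ acts minimally and $X$ is compact, $\mc{C}^{\ast}$ covers $X$ (the union of the $G$-translates of any member is a non-empty $G$-invariant open set, hence equals $X$; now use compactness). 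In particular $X$ admits a finite clopen partition all of whose parts lie in $\mc{C}^{\ast}$.

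I would then establish two micro-support facts. The action of $G$ is micro-supported: if $\rist_G(P)=\{1\}$ for some proper non-empty clopen $P$, then by minimality finitely many $G$-translates of $P$ cover $X$, and refining them to a partition $\mc{Q}$ gives $\rist_G(Q)=\{1\}$ for all $Q\in\mc{Q}$, so local decomposability forces $\grp{\rist_G(Q)\mid Q\in\mc{Q}}=\{1\}$ to be open in $G$, contradicting non-discreteness. The action of $A$ is micro-supported, and this is the combinatorial core: given $P\in\mc{C}^{\ast}$, use that $P$ is $A$-compressible and $X$ is perfect to choose $a\in A$ with $aP$ contained in a proper non-empty clopen subset of $P$, so that $R:=P\setminus aP$ is a proper non-empty clopen set; pick $1\neq h\in\rist_G(R)$ (possible since $G$ is micro-supported). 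Then $\supp(aha^{-1})\subseteq aP$ and $\supp(h)\subseteq R$ are disjoint, so $[a,h]=(aha^{-1})h^{-1}$ is a product of two commuting non-trivial elements, hence non-trivial; it is supported in $aP\cup R=P$, and $[a,h]=a(ha^{-1}h^{-1})\in A$ because $A\normal G$. Thus $\rist_A(P)\neq\{1\}$ for every $P\in\mc{C}^{\ast}$, and since $\mc{C}^{\ast}$ covers $X$ and is downward closed, $\rist_A(Q)\neq\{1\}$ for every proper non-empty clopen $Q$.

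Minimality of $A$ follows at once: if $\ol{Ax}\neq X$ for some $x$, choose a non-empty clopen $O$ disjoint from $\ol{Ax}$ and a $W\in\mc{C}^{\ast}$ with $x\in W$; compressing $W$ into $O$ by some $a\in A$ gives $ax\in O$, contradicting $O\cap Ax=\emptyset$. For openness and local decomposability of $A$, fix any clopen partition $\mc{P}$ and set $D_{\mc{P}}=\grp{\rist_G(P)\mid P\in\mc{P}}$ and $\mathfrak{D}_{\mc{P}}=\grp{\rist_A(P)\mid P\in\mc{P}}\leq A$. The subgroups $\rist_G(P)$ ($P\in\mc{P}$) pairwise commute (disjoint supports) and meet trivially (an element of $\rist_G(P)\cap\rist_G(P')$ fixes $P^{c}$ and $(P')^{c}$, hence fixes $P^{c}\cup(P')^{c}=X$), so $D_{\mc{P}}$ is the internal direct product $\prod_{P\in\mc{P}}\rist_G(P)$, and likewise $\mathfrak{D}_{\mc{P}}=\prod_{P\in\mc{P}}\rist_A(P)$ sitting inside it coordinatewise; moreover (e.g.\ via the open mapping theorem) the subspace topology on $D_{\mc{P}}$ inherited from $G$ agrees with the product topology. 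Hence $\mathfrak{D}_{\mc{P}}$ is open in $D_{\mc{P}}$ --- which is itself open in $G$ by local decomposability --- if and only if $\rist_A(P)$ is open in $\rist_G(P)$ for each $P$. Taking $\mc{P}$ with all parts in $\mc{C}^{\ast}$ then exhibits an open subgroup of $G$ inside $A$, so $A$ is open; running the same argument for an arbitrary $\mc{P}$ then yields local decomposability of $A$ (once $A$ is known open, ``open in $A$'' and ``open in $G$'' coincide).

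Thus everything reduces to the claim that $\rist_A(P)$ is open in $\rist_G(P)$ for $P\in\mc{C}^{\ast}$, and this is where the main difficulty lies. It is really an internal instance of the theorem: $\rist_A(P)$ is a non-trivial normal subgroup of $\rist_G(P)$ acting on the compact zero-dimensional space $P$, the $A$-compressible clopen subsets of $P$ still cover $P$ and witness a compressibility property of $\rist_A(P)$ there, and the restriction to $P$ of the local decomposability of $G$ survives in a suitable form; one verifies that $\{\rist_A(Y)\mid Y\in\mc{C}^{\ast},\ Y\subseteq P\}$ is a compression family for $\rist_A(P)$ in the sense of Definition~\ref{def:msc:subgroups}, which connects the situation to, and is ultimately resolved by, the results of Section~\ref{sec:compressible_splittable}. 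The main obstacle is precisely this last step: the micro-support of $A$ is algebraic largeness and comes cheaply from the commutator trick above, whereas openness is topological largeness, and extracting it requires using compressibility much more delicately, in tandem with local decomposability, to pin down an actual open subgroup of $\rist_G(P)$ inside $\rist_A(P)$.
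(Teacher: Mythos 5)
There is a genuine gap, and you have located it yourself: the entire content of the theorem is the step you leave unproved, namely that $A$ (equivalently, $\rist_A(P)$ inside $\rist_G(P)$) contains an \emph{open} subgroup. Your reductions, the commutator trick giving $\rist_A(P)\neq\triv$, and the minimality argument are all correct but only deliver algebraic non-triviality, which is much weaker than openness and is not where the difficulty of the theorem lies. The suggested resolution --- recasting the situation as ``an internal instance of the theorem'' and invoking a compression family together with the results of Section~\ref{sec:compressible_splittable} --- cannot close this gap: Theorem~\ref{thm:compressible_splittable_union}, Proposition~\ref{prop:compressible_splittable} and Lemma~\ref{lem:double_commutator:subgroups:bis} are purely algebraic statements about monoliths and derived subgroups and contain no mechanism for producing open subgroups; moreover, reducing the theorem to a localised copy of itself with no new input is circular.

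The missing idea is the contraction-group argument, which is the one genuinely topological tool available here. Pick, by compressibility, $g\in A$ with $gZ$ properly contained in $Z$, where $Z$ is a non-empty $A$-compressible clopen set. Lemma~\ref{lem:contraction} (valid for normal subgroups that are not assumed closed) gives $\con_G(g)\le A$, and since the action is locally (weakly) decomposable, \cite[Proposition~6.14]{CRW-Part2} shows that $L^\ast=\con_G(g)\cap L$ is open in $L=\rist_G(Z\setminus gZ)$. This is exactly the conversion of the single compressing element $g\in A$ into topological largeness of $A$ inside a rigid stabiliser, which your proposal never achieves. One then covers $X$ by finitely many $A$-translates $a_i(Z\setminus gZ)$ (using minimality of $A$, which you did prove) and applies local decomposability, together with the non-discreteness of rigid stabilisers, to see that $\grp{a_iL^\ast a_i\inv}$ is open in $G$ and contained in $A$; openness of $A$, and hence its local decomposability, follow. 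Without Lemma~\ref{lem:contraction} and the cited proposition (or a substitute for them), your outline does not prove the theorem.
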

	
	Theorem~\ref{thm:loc_decomp_compressible} applies to many almost simple \tdlc groups.
	For instance, in \cite{GR-generaltheory}, it is shown that a \tdlc group $G$ of homeomorphisms of a compact zero-dimensional space $X$, induces a unique \tdlc topology of its piecewise full group $\Full(G)$ if $G$ is also locally decomposable. 
	If $G$ is compactly generated and acts compressibly and minimally on $X$ then the monolith $\Al(G)$ is compactly generated (and simple), but it is not \emph{a priori} a closed subgroup of $\Full(G)$. 
	Theorem~\ref{thm:loc_decomp_compressible} guarantees that $\Al(G)$ is in fact open in $\Full(G)$ and in particular a \tdlc group that is compactly generated and abstractly simple. 
	If $G$ is further assumed non-discrete then $\Al(G)$ is a group in the class $\ms{S}$ of non-discrete \tdlc groups that are compactly generated and topologically simple. 
	This class is important in the general theory of \tdlc groups (see \cite{CRW-Part2}) and further consequences of the above result for this theory are elaborated in \cite{GR-generaltheory}, some of which require the next result.
	  We recall the class $\ms{R}$ introduced in \cite{CRW-DenseLC}.
		
	\begin{defn}
	We say a \tdlc group $G$ is \defbold{regionally expansive} if there is a compactly generated open subgroup $H$ of $G$ and an identity neighbourhood $U$ in $H$ such that $\bigcap_{h \in H}hUh\inv = \triv$.
	
	Write $\TMon(G)$ for the intersection of all non-trivial closed normal subgroups of $G$.  A \tdlc group $G$ is \defbold{robustly monolithic} if $\TMon(G)$ is non-discrete, regionally expansive, and topologically simple.
	
	Write $\ms{R}$ for the class of robustly monolithic groups.
	\end{defn}
	
	The class $\ms{R}$ in particular includes any \tdlc group $G$ with a closed normal subgroup $S$, such that $\CC_G(S) = \triv$ and $S$ is non-discrete, compactly generated and topologically simple.
	
	\begin{cor}[{See Section~\ref{sec:robustly_monolithic}}]\label{cor:R_open_monolith}
	Let $G \in \ms{R}$ and suppose $G$ has an open subgroup of the form $A \times B$ where $A$ and $B$ are non-trivial closed subgroups.  Then $\TMon(G)$ is open in $G$ and abstractly simple.
	\end{cor}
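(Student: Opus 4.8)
Write $M := \TMon(G)$. By definition of $\ms{R}$, $M$ is non-discrete, regionally expansive and topologically simple; being non-discrete and topologically simple it is non-abelian, so $\CC_G(M)$ is a closed normal subgroup of $G$ meeting $M$ trivially, whence $\CC_G(M) = \triv$, $G$ is monolithic, and (a standard feature of groups in $\ms{R}$) $\QZ(G) = \triv$. The hypothesis that $G$ has an open subgroup $A \times B$ with $A, B$ non-trivial closed says precisely that $G$ has non-trivial local decomposition; by the Caprace--Reid--Wesolek structure theory this makes the centralizer lattice $\mc{LC}(G)$ non-trivial, so we may take $\Omega$ to be its Stone space equipped with the canonical $G$-action. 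Since $\QZ(G)=\triv$ this action is faithful, and by construction it is locally decomposable and micro-supported; in particular $\Omega$ is a perfect, infinite, compact zero-dimensional space. The plan is to realise $M$ as the monolith of a fully compressible micro-supported action on $\Omega$ and to combine Theorems~\ref{thm:loc_decomp_compressible} and~\ref{thm:compressible_splittable_union}.

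The technical core is to show that $M$ acts on $\Omega$ minimally and fully compressibly --- equivalently, that the non-dense clopen subsets of $\Omega$ furnish a joinable compression family for $M$ via Lemma~\ref{lem:micro-supported_family} and Corollary~\ref{cor:micro-supported_compression}. Minimality is part of the structure theory of $\ms{R}$; one can also argue directly from topological simplicity: if $Z \subsetneq \Omega$ were a non-empty closed $M$-invariant set, then $\rist_G(\Omega \setminus Z) \ne \triv$ by micro-supportedness, $M$ normalises it, and $\CC_G(M)=\triv$ gives $\rist_M(\Omega\setminus Z) = M \cap \rist_G(\Omega\setminus Z) \ne \triv$; this is a non-trivial normal subgroup of $M$ fixing $Z$ pointwise, so its closure, being all of $M$, fixes $Z$ pointwise, which is incompatible with $M$ being micro-supported on $\Omega$. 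The full compressibility of the $M$-action is the analogue, for centralizer-lattice actions, of the fact that a geometrically dense action on a tree induces an extremely proximal boundary action; I expect this to be the main obstacle, and would establish it from topological simplicity of $M$ together with micro-supportedness and regional expansiveness (or invoke it from the structure theory of robustly monolithic groups).

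Granting this, Theorem~\ref{thm:loc_decomp_compressible} applies with the normal subgroup taken to be $M$: $G$ acts faithfully, minimally and locally decomposably on $\Omega$ and $M$ acts compressibly, so $M$ is open in $G$ and the $M$-action on $\Omega$ is itself minimal and locally decomposable. In particular $M$ is a \tdlc group, closed in $G$, with $\mc{LC}(M) \cong \mc{LC}(G)$, and it acts faithfully, minimally, locally decomposably, micro-supportedly and fully compressibly on $\Omega$.

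To conclude that $M$ is abstractly simple, set $\mc{C} := \{\rist_M(Y) \mid Y \text{ a non-dense clopen subset of } \Omega\}$, a conjugation-invariant family of subgroups of $M$, and $H := \grp{\mc{C}}$; by Lemma~\ref{lem:micro-supported_family} and Corollary~\ref{cor:micro-supported_compression}, $\mc{C}$ is a joinable compression family for $M$. Since $H$ contains the rigid stabilisers of a finite clopen partition of $\Omega$ into at least two (necessarily non-dense) parts, and these generate an open subgroup of $M$ by local decomposability, $H$ is open, hence closed, hence $H = M$ by topological simplicity. Theorem~\ref{thm:compressible_splittable_union}(i) then identifies $\Der(M) = \Der(H)$ with the monolith of $M$. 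To upgrade this to $\Der(M) = M$ and simplicity I verify the shift condition~\ref{item:translation}: for $\rist_M(Y) \in \mc{C}$, Lemma~\ref{lem:leafless} supplies $g \in H = M$ such that the conjugates $g^{n}\rist_M(Y)g^{-n}$ $(n \ge 0)$ pairwise commute, hence have pairwise disjoint supports in the micro-supported action; since $\Omega$ is compact these supports shrink, so $\prod_{n \ge 0} g^{n}\rist_M(Y)g^{-n}$ is a Cauchy product in the \tdlc group $G$, converges there, and lies in the closed subgroup $M$, giving a homomorphism $(-)^{\bullet}\colon \prod_{i \ge 1}\rist_M(Y) \to M$ with $(a,1,1,\dots)^{\bullet}=a$ and $g(a_1,a_2,\dots)^{\bullet}g^{-1}=(1,a_1,a_2,\dots)^{\bullet}$. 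Thus $\mc{C}$ is a shift-joinable compression family for $M = H$, and Theorem~\ref{thm:compressible_splittable_union}\ref{item:joinable_with_translation_implies_perfect} gives $\Der(H) = H = M$ equal to its own monolith and simple; that is, $M = \TMon(G)$ is perfect and abstractly simple. Combined with the openness from the previous paragraph, this proves the corollary.
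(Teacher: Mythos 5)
Two steps in your argument fail as written. First, you take $\Omega$ to be the Stone space of $\mathrm{LC}(G)$ and assert that the $G$-action on it is ``by construction \dots locally decomposable''. It is not, in general: what comes for free on $\Omega_G$ is micro-supportedness (local \emph{weak} decomposability), whereas local decomposability --- rigid stabilisers of every clopen partition generating an open subgroup --- corresponds to the subalgebra $\mathrm{LD}(G)$ of Lemma~\ref{lem:simple_dynamics}(ii), which is in general a proper subalgebra of $\mathrm{LC}(G)$. The hypothesis that $G$ has an open subgroup $A\times B$ gives exactly that $\mathrm{LD}(G)$ is non-trivial, and the paper therefore works on $X=\mf{S}(\mathrm{LD}(G))$, a non-trivial quotient $G$-space of $\Omega_G$ on which the action genuinely is faithful, minimal and locally decomposable, with the $\TMon(G)$-action compressible by Lemma~\ref{lem:simple_dynamics}(iii); only then can Theorem~\ref{thm:loc_decomp_compressible} be applied. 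As stated, your application of that theorem on $\Omega$ is unjustified.

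Second, your route to abstract simplicity assumes the $\TMon(G)$-action on $\Omega$ is \emph{fully} compressible; you yourself flag this as ``the main obstacle'' and propose to invoke it from the structure theory, but no such result exists --- the paper records at the end of Section~\ref{sec:robustly_monolithic} that full compressibility of the action on $\Omega_G$ is unknown for general $G\in\ms{R}$. Only compressibility is available, and that suffices: Theorem~\ref{thm:loc_decomp_compressible} gives that $M=\TMon(G)$ is open and acts minimally and locally decomposably on $X$; then for any non-trivial normal subgroup $N$ of $M$, topological simplicity makes $N$ dense in $M$, density together with compactness of a compressible clopen set makes the $N$-action on $X$ compressible (the set of elements mapping that clopen set into a given open set is open), and a second application of Theorem~\ref{thm:loc_decomp_compressible} makes $N$ open in $M$, hence $N=M$. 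Your alternative ending via condition~\ref{item:translation} and Theorem~\ref{thm:compressible_splittable_union}\ref{item:joinable_with_translation_implies_perfect} has a further gap: pairwise disjoint supports of the conjugates $g^{n}\rist_M(Y)g^{-n}$ do not make the partial products Cauchy in the group topology (the factors need not tend to $1$), so the homomorphism $(-)^{\bullet}$ is not obtained this way. The repair is to replace $\Omega$ by $\mf{S}(\mathrm{LD}(G))$ and to drop full compressibility in favour of the double application of Theorem~\ref{thm:loc_decomp_compressible}.
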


\section{Compression families and normal subgroups}

In this section we obtain results on the normal subgroup structure of groups with compressible action, including Theorem~\ref{thm:compressible_splittable_union}.

\subsection{Preliminaries}

First let us note some equivalent formulations of almost simplicity that are easily verified and that we will use without further comment.

\begin{lem}
Let $G$ be a group.  The following are equivalent:
\begin{enumerate}[(i)]
\item $G$ has a non-abelian simple monolith $M$;
\item $G$ has a simple normal subgroup $M$ with $\CC_G(M) = \triv$;
\item $G$ has a non-abelian normal subgroup $M$, such that every non-trivial subgroup of $G$ that is normalised by $M$, contains $M$.
\end{enumerate}
\end{lem}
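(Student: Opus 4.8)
The plan is to prove the cycle of implications (i) $\Rightarrow$ (ii) $\Rightarrow$ (iii) $\Rightarrow$ (i), using throughout only two elementary facts: the centraliser of a normal subgroup is normal, and $[N,M] \le N \cap M$ whenever $N$ and $M$ normalise one another. For (i) $\Rightarrow$ (ii): the monolith $M$ is normal by construction and simple by hypothesis, so it remains only to check $\CC_G(M) = \triv$. If not, $\CC_G(M)$ is a non-trivial normal subgroup of $G$, hence contains the monolith $M$; but $M \le \CC_G(M)$ forces $M$ abelian, contradicting (i).

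For (ii) $\Rightarrow$ (iii): first note $M$ is non-abelian, since otherwise $M \le \CC_G(M) = \triv$, contradicting the simplicity (in particular non-triviality) of $M$. Now take any non-trivial $N \le G$ normalised by $M$. Since $M$ normalises both $N$ and $M$, the subgroup $N \cap M$ is normal in $M$, so by simplicity either $N \cap M = M$ — whence $M \le N$, as wanted — or $N \cap M = \triv$. In the latter case $[N,M] \le N \cap M = \triv$, so $N \le \CC_G(M) = \triv$, contradicting the choice of $N$.

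For (iii) $\Rightarrow$ (i): applying the hypothesis to an arbitrary non-trivial normal subgroup of $M$ shows that $M$ has no proper non-trivial normal subgroup, so $M$ is simple (and non-trivial, being non-abelian). Any non-trivial normal subgroup $N$ of $G$ is in particular normalised by $M$, hence contains $M$; since $M$ is itself a non-trivial normal subgroup of $G$, it is therefore the monolith, and it is non-abelian and simple, giving (i).

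I do not expect a genuine obstacle: the proof is short and purely formal. The only points needing slight care are the commutator inclusion $[N,M] \le N \cap M$ used in the second step, and the degenerate cases in which $G$ or $M$ might be trivial, which are dispatched by observing that each of (i)--(iii) forces $G$ to possess a non-trivial (indeed non-abelian) normal subgroup.
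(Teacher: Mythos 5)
Your proof is correct; the paper states this lemma without proof (it is described as ``easily verified''), and your cycle (i)~$\Rightarrow$~(ii)~$\Rightarrow$~(iii)~$\Rightarrow$~(i), using normality of $\CC_G(M)$, the inclusion $[N,M]\le N\cap M$ for mutually normalising subgroups, and the simplicity dichotomy for $N\cap M\normal M$, is exactly the standard argument the authors intend. All the delicate points (why $N\cap M$ is normal in $M$, why $N$ normalises $M$, and the non-triviality conventions) are handled correctly, so nothing needs to be added.
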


Here are some well-known facts about commutators and normalisers that we will use later.

\begin{lem}\label{lem:commutator_product}
Let $G$ be a group, let $H,K \le G$ with $H$ normalizing $K$ and let $h_1,h_2 \in H$.
\begin{enumerate}[(i)]
\item Suppose there is $k \in K$ such that $kh\inv_1 k\inv$ commutes with $h_2$.  Then $[h_1,h_2] \in K$.
\item Let $g \in G$.  If $[h_1,g],[h_2,g] \in K$, then $[h_1h_2,g] \in K$.
\end{enumerate}
\end{lem}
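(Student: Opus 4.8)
The plan is to prove both parts by elementary commutator calculus, using only that $H$ normalises $K$; in particular $K$ is not assumed normal in $G$, so one has to be slightly careful about which quotients exist. Throughout I use the convention $[a,b] = aba\inv b\inv$.

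For part (i), I would pass to a quotient. Since $h_1,h_2 \in H$ and $H$ normalises $K$, the subset $HK$ is a subgroup of $G$ in which $K$ is normal, so there is a quotient homomorphism $\pi \colon HK \to HK/K$ with kernel $K$. Applying $\pi$ to the hypothesis that $kh_1\inv k\inv$ commutes with $h_2$, and using $\pi(k)=1$, we find that $\pi(h_1)\inv$ commutes with $\pi(h_2)$, hence so do $\pi(h_1)$ and $\pi(h_2)$; thus $\pi([h_1,h_2]) = 1$, i.e.\ $[h_1,h_2] \in K$. If one wishes to avoid the quotient, the same conclusion follows by a direct computation: substitute $h_1\inv = k\inv(kh_1\inv k\inv)k$ into $[h_1,h_2]=h_1h_2h_1\inv h_2\inv$, use that $kh_1\inv k\inv$ commutes with $h_2$ to move a factor past $h_2$, and then collect the remaining terms into $K$ using that $h_1$ and $h_2$ normalise $K$.

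For part (ii), I would apply the standard identity $[h_1h_2,g] = h_1[h_2,g]h_1\inv \cdot [h_1,g]$ (a routine verification with the chosen convention). By hypothesis $[h_2,g] \in K$, and since $h_1 \in H$ normalises $K$ we get $h_1[h_2,g]h_1\inv \in K$; as $[h_1,g] \in K$ as well, the product lies in $K$. Here the quotient trick of part (i) is not available, since $g$ need not normalise $K$, so the identity-based argument is the natural route.

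I do not expect a genuine obstacle, since both statements are formal. The only points requiring (minor) care are to check that $HK$ really is a subgroup with $K$ normal in it before forming $\pi$ in part (i), and to ensure that every assertion of the form ``this element lies in $K$'' in part (ii) is justified by $H$ normalising $K$ rather than by any normality of $K$ in $G$ that is not assumed.
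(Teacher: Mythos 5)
Your proof is correct. Part (ii) is exactly the paper's argument: the identity $[h_1h_2,g]=h_1[h_2,g]h_1^{-1}\,[h_1,g]$ together with the fact that $h_1$ normalises $K$. For part (i) you take a slightly different, more structural route: you form the subgroup $HK$ (legitimate, since $H$ normalises $K$, and $K$ is indeed normal in $HK$) and pass to the quotient $HK/K$, where the hypothesis says the images of $h_1$ and $h_2$ commute, so $[h_1,h_2]\in K$. The paper instead stays inside $G$ and uses the one-line identity $[h_1,h_2]=[h_1(kh_1^{-1}k^{-1}),h_2]=[[h_1,k],h_2]$, then notes $[h_1,k]\in K$ and that commutators of elements of $K$ with $h_2$ land in $K$. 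The two arguments carry the same content — your quotient map is just a packaging of the paper's observation that $kh_1^{-1}k^{-1}\equiv h_1^{-1} \pmod K$ — and your fallback ``direct computation'' is essentially the paper's identity; the quotient phrasing is perhaps cleaner conceptually, while the paper's version avoids introducing $HK$ and works verbatim inside $G$. Your care about not assuming $K\trianglelefteq G$, and about why the quotient trick is unavailable in (ii), is exactly the right level of caution.
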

	
\begin{proof}
(i)
We see that $[h_1,h_2] = [h_1(kh\inv_1 k\inv),h_2] = [[h_1,k],h_2]$.  Since $K$ is normalised by $h_1$ and $h_2$, we have $[h_1,k] \in K$ and hence $[[h_1,k],h_2] \in K$, so $[h_1,h_2] \in K$ as required.

(ii)
Let $h = h_1h_2$.  We have
\[
[h,g] = [h_1h_2,g] = h_1[h_2,g]h\inv_1 [h_1,g];
\]
if $[h_1,g],[h_2,g] \in K$, we conclude that $[h,g] \in K$.
\end{proof}

We collect together some properties of compression families of subgroups that will be useful later.

\begin{lem}\label{lem:nondegen}
Let $G$ be a non-trivial group and let $\mc{C}$ be a family of subgroups of $G$ that is invariant under conjugation and satisfies conditions \ref{item:nonabelian} and \ref{item:disjointtranslates} for $H = G$.  Then there exist distinct $A,B,C \in \mc{C}$ such that $[A,C]=\triv$ and $C \nleq B$.
\end{lem}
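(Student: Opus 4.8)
The plan is to produce the three subgroups one at a time: the commuting pair $A, C$ comes from a single application of condition \ref{item:disjointtranslates}, the third subgroup $B$ from a second application, and condition \ref{item:nonabelian} is used at each stage to rule out the degenerate possibilities (a subgroup coinciding with one of its conjugates, or two of the three subgroups coinciding).

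First I would use non-triviality of $G$ to fix some $g \in G \setminus \{1\}$. Condition \ref{item:disjointtranslates} for $H = G$ then yields $A \in \mc{C}$ with $[A, gAg\inv] = \triv$, and I set $C := gAg\inv$, which lies in $\mc{C}$ because $\mc{C}$ is conjugation-invariant. Thus $[A,C] = \triv$. If $A = C$ then $A = gAg\inv$ and so $[A,A] = [A,C] = \triv$, i.e.\ $A$ is abelian, contradicting \ref{item:nonabelian}; hence $A \ne C$.

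For the third subgroup: by \ref{item:nonabelian} the group $C$ is non-abelian, so in particular $C \ne \triv$ and I may choose $c \in C \setminus \{1\}$. Applying \ref{item:disjointtranslates} to $c$ gives $B \in \mc{C}$ with $[B, cBc\inv] = \triv$. If $C \le B$ then $c \in B$, whence $cBc\inv = B$ and $[B,B] = \triv$, contradicting \ref{item:nonabelian}; so $C \nleq B$, which already forces $B \ne C$. The one point requiring a little care is to rule out $B = A$: were $B = A$, then since $[A,C] = \triv$ the element $c$ would centralise $A$ pointwise, so $cAc\inv = A$, and therefore $[B, cBc\inv] = [A,A] \ne \triv$ by \ref{item:nonabelian}, contradicting the choice of $B$. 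Hence $A, B, C$ are pairwise distinct with $[A,C] = \triv$ and $C \nleq B$, as required. I do not expect a genuine obstacle here: the argument is bookkeeping of non-degeneracy conditions, and the only slightly subtle step is the verification that the second application of \ref{item:disjointtranslates} returns a subgroup genuinely different from $A$, which is precisely where the hypothesis $[A,C] = \triv$ is reused.
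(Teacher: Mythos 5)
Your proof is correct and uses essentially the same ingredients as the paper: two applications of condition \ref{item:disjointtranslates} (first to an arbitrary $g\neq 1$, then to a non-trivial element of one of the resulting commuting pair) together with \ref{item:nonabelian} to rule out the degenerate coincidences. The paper packages this as an argument by contradiction with the negated conclusion, whereas you construct the triple $A$, $C=gAg\inv$, $B$ directly, but the underlying mechanism -- that $B$ cannot contain a non-abelian member of $\mc{C}$ normalised by the element to which \ref{item:disjointtranslates} is applied -- is the same.
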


\begin{proof}
We suppose for a contradiction that for all distinct $A,B,C \in \mc{C}$, if $[A,C]=\triv$ then $C \le B$.  

Let $g \in G \setminus \{1\}$ and let $A \in \mc{C}$ such that $[A,gAg\inv]=\triv$.  Since $A$ is not abelian we have $A \neq gAg\inv$, so by hypothesis, for all $B \in \mc{C} \setminus \{A,gAg\inv\}$ we have $gAg\inv \le B$.  Now let $a \in A \setminus \{1\}$ and $B \in \mc{C}$ such that $[B,aBa\inv]=\triv$.  Since $a$ normalises both $A$ and $gAg\inv$ we have $B \in \mc{C} \setminus \{A,gAg\inv\}$.  Then $gAg\inv \le B$, and hence also $gAg\inv \le aBa\inv$.  But then $B$ cannot commute with $aBa\inv$ since $gAg\inv$ is non-abelian, giving the desired contradiction.
\end{proof}

\begin{lem}\label{lem:leafless}
Let $G$ be a non-trivial group, let $\mc{C}$ be a joinable compression family of subgroups of $G$ and let $H$ be a normal subgroup of $G$ that is semi-transitive on $\mc{C}$.
\begin{enumerate}[(i)]
\item For all $A \in \mc{C}$ there exist $B_1,B_2 \in \mc{C}$ such that $B_1,B_2 \le A$ and $[B_1,B_2]=\triv$.
\item For all $A \in \mc{C}$ there exists $g \in H$ such that $[g^mAg^{-m},g^nAg^{-n}] = \triv$ for all distinct $m,n \in \Zb$.

\item Let $A_1,A_2 \in \mc{C}$.  Then there exist $B_1,B_2 \in \mc{C}$ such that
\[
[A_1,B_1] = [B_1,B_2] = [B_2,A_2] = \triv.
\]
\item Given $A \in \mc{C}$ there is $B \in \mc{C}$ such that $A \le B$ and $\Der(A) \le \Der(B \cap H)$.  Consequently, $A \cap H$ is never abelian for $A \in \mc{C}$.
\end{enumerate}
\end{lem}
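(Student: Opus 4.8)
The plan is to reduce the whole lemma to two preliminary facts. \emph{First}, condition \ref{item:disjointtranslates} upgrades to hold for \emph{every} $A\in\mc C$: note $H\neq\triv$ (else \ref{item:semitransitive} forces $B\le A$ for all $A,B\in\mc C$, against Lemma~\ref{lem:nondegen}); pick $g_0\in H\setminus\{1\}$ and $A_0\in\mc C$ with $[A_0,g_0A_0g_0\inv]=\triv$, then $k\in H$ with $kAk\inv\le A_0$ by \ref{item:semitransitive}, and check that $g:=k\inv g_0k\in H$ satisfies $[A,gAg\inv]=\triv$, with $g\neq1$ since $A$ is non-abelian. Call such a $g$ a \emph{shift} of $A$. \emph{Second}, if $g$ is a shift of $A$ then $A$ and $gAg\inv$ lie in a common $D\in\mc C$: conjugating the relation gives $[A,g\inv Ag]=\triv$ and $[gAg\inv,g^2Ag^{-2}]=\triv$, and unless $g^2$ normalises $A$ (a degenerate situation that is easily circumvented) one of $g\inv Ag$, $g^2Ag^{-2}$ is not contained in $gAg\inv$, $A$ respectively, so feeding it as the auxiliary subgroup $C$ into \ref{item:joinable} produces $D$ with $\grp{A,gAg\inv}\le D$. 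The same device, used twice, joins $g\inv Ag$, $A$, $gAg\inv$ into one member of $\mc C$ whenever these pairwise commute.

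Given these, part (i) is immediate: with $g$ a shift of $A$ and $D\ni A,gAg\inv$, use \ref{item:semitransitive} to get $h\in H$ with $hDh\inv\le A$; then $B_1:=hAh\inv$ and $B_2:=h(gAg\inv)h\inv$ lie in $\mc C$, are contained in $A$, and satisfy $[B_1,B_2]=h[A,gAg\inv]h\inv=\triv$. For part (iii), given $A_1,A_2$ I would apply (i) inside each to obtain commuting $P_i,Q_i\le A_i$ in $\mc C$, slide $A_2$ inside $Q_1$ via \ref{item:semitransitive}, and use that commuting relations pass to subgroups to read off a commuting path of length three from $A_1$ to $A_2$; some bookkeeping (conjugating back so that $A_2$ itself, not merely a conjugate, appears at the end) is needed, but no new idea.

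Part (ii) is the technical core: one wants the shift $g$ chosen so that \emph{all} the conjugates $g^kAg^{-k}$, $k\in\Zb$, pairwise commute, not just consecutive ones. The plan is a ping-pong construction: using the two preliminaries together with \ref{item:semitransitive} and \ref{item:joinable} repeatedly, arrange $D,D'\in\mc C$ and $g\in H$ with $[A,D]=[A,D']=[D,D']=\triv$, $gAg\inv\le D$, $g\inv Ag\le D'$, $gDg\inv\le D$ and $g\inv D'g\le D'$. Then induction places $g^kAg^{-k}$ in $D$ for $k\ge1$ and in $D'$ for $k\le-1$, and every required commutator $[g^mAg^{-m},g^nAg^{-n}]$ vanishes after conjugating one of the three relations $[A,D]=[A,D']=[D,D']=\triv$. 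Setting up the self-similar nesting $gDg\inv\le D$ compatibly with the commuting constraints is where I expect the real work; this is essentially the ``weak version of \ref{item:translation}'' flagged before the lemma.

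Finally, for part (iv): by part (ii) choose a shift $g$ of $A$ with $A$, $gAg\inv$, $g\inv Ag$ pairwise commuting, and by the second preliminary pick $B\in\mc C$ containing all three. For $a,b\in A$ set $\xi:=[a,g]$ and $\eta:=[b,g\inv]$. Both lie in $H$ (since $H\normal G$ and $g\in H$) and in $B$ (as $\xi=a\,(ga\inv g\inv)\in A\cdot gAg\inv\le B$ and $\eta=b\,(g\inv b\inv g)\in A\cdot g\inv Ag\le B$), hence in $B\cap H$. Since $ga\inv g\inv$ commutes with all of $A$ and with $g\inv Ag$, while $g\inv b\inv g$ commutes with all of $A$ and with $gAg\inv$, expanding $[\xi,\eta]=[a\,(ga\inv g\inv),\,b\,(g\inv b\inv g)]$ and cancelling collapses it to exactly $[a,b]$; hence $\Der(A)\le\Der(B\cap H)$. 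For the concluding assertion, $\Der(A)\neq\triv$ gives $\Der(B\cap H)\neq\triv$, so $B\cap H$ is non-abelian; and for any $A'\in\mc C$, \ref{item:semitransitive} yields $g\in H$ with $gBg\inv\le A'$, so $g(B\cap H)g\inv=gBg\inv\cap H$ is a non-abelian subgroup of $A'\cap H$, proving $A'\cap H$ non-abelian. The main obstacles overall are the ping-pong construction in (ii) and the joining/degenerate-case bookkeeping in the second preliminary.
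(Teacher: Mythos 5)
Your two ``preliminaries'' and the deductions from them contain the real content, and this is where the gaps are. First, part (ii) is only a plan: you yourself flag the ping-pong construction ($D,D'$ with $gDg\inv\le D$, $g\inv D'g\le D'$, plus the commuting constraints) as ``where I expect the real work'', and you never carry it out, so the technical core of the lemma is unproved. It is also the wrong kind of work: once (i) is available, (ii) follows with no nesting at all. Pick any $A_0\in\mc{C}$ and commuting $B_1,B_2\le A_0$ from (i), use \ref{item:semitransitive} to find $f,h\in H$ with $fAf\inv\le B_1$ and $hA_0h\inv\le B_2$, and set $g=f\inv hf$; since $hA_0h\inv\le B_2\le A_0$, induction gives $h^k(fAf\inv)h^{-k}\le B_2$ for all $k\ge 1$, so $[fAf\inv,h^k(fAf\inv)h^{-k}]\le[B_1,B_2]=\triv$, which, after conjugating by $f\inv$ and by powers of $g$, is exactly (ii). This is the paper's route.

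Second, the steps you wave through do not stand as stated. In the ``second preliminary'', the degenerate case $g^2\in\N_G(A)$ is not easily circumvented: since $gAg\inv\le\CC_G(A)\le\N_G(A)$, perturbing $g$ by elements of $A$ or of $gAg\inv$ preserves the degeneracy, and the genuine fix is the paper's detour --- first show, via Lemma~\ref{lem:nondegen}, \ref{item:joinable} and \ref{item:semitransitive}, that every member of $\mc{C}$ properly contains a member of $\mc{C}$, then join $A$ with $gBg\inv$ for a proper $B<A$ (so the auxiliary $gAg\inv$ is automatically not contained in $gBg\inv$); note this produces commuting $B,gBg\inv$ in a common member, not $A,gAg\inv$ themselves, which is all that (i) needs. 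More seriously, your claim that ``the same device, used twice'' puts $g\inv Ag$, $A$, $gAg\inv$ inside one member of $\mc{C}$ (the input to your (iv)) does not go through: after a first join $D_1\ge\grp{A,gAg\inv}$, a second application of \ref{item:joinable} needs some $C\in\mc{C}$ commuting with $g\inv Ag$ and \emph{not contained in} $D_1$, and nothing rules out that every available candidate lies in $D_1$. (With the explicit $g=f\inv hf$ above this issue evaporates, since $g^kAg^{-k}\le f\inv A_0f$ for all $k\ge 0$, so one works with $A, gAg\inv, g^2Ag^{-2}$ and the identity $[a,b]=[[a,g],[b,g^2]]$; but your construction does not provide this.) Finally, your sketch of (iii) does not close: $P_1,Q_1\le A_1$ do not commute with $A_1$, and conjugating back so that $A_2$ itself appears at one end destroys the commutation you need at the other end. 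This is not bookkeeping --- the paper needs an extra idea here, namely the dichotomy ``either every $C\in\mc{C}$ commuting with $A_1$ lies in $A_2$, or some such $C$ lets \ref{item:joinable} place $A_1,A_2$ in a single $D$'', combined with (ii). Your part (iv) computation (the collapse of $[\xi,\eta]$ to $[a,b]$, membership in $B\cap H$, and the transport by semi-transitivity) is correct, but it rests on the unproved (ii) and the unproved three-into-one containment.
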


\begin{proof}
We use Lemma~\ref{lem:nondegen} to obtain distinct $A,B,C \in \mc{C}$ such that $[A,C]=\triv$ and $C \nleq B$.  Since $\mc{C}$ is joinable, there exists $D \in \mc{C}$ such that $\grp{A,B} \le D$.  In particular, we have $A < D$ or $B < D$ (or both).  By semi-transitivity of the action on $\mc{C}$, it follows that every $A \in \mc{C}$ properly contains an element of $\mc{C}$.

Next, let $g \in G \setminus \{1\}$ and $A \in \mc{C}$ such that $[A,gAg\inv]=\triv$, and let $B \in \mc{C}$ such that $B < A$.  Then $gBg\inv$ does not contain $gAg\inv$, which commutes with $A$; hence by joinability again, there is $C \in \mc{C}$ such that $\grp{gBg\inv,A} \le C$.  In particular, $C$ contains both $B$ and $gBg\inv$, and we have $[B,gBg\inv]=\triv$.  By semi-transitivity, every $A \in \mc{C}$ contains a conjugate of $C$, and part (i) follows.

For (ii), let $A\in \mc{C}$ and pick $A_0\in \mc{C}$. 
By part (i), there are $B_1, B_2\in \mc{C}$ that commute and are contained in $A_0$.
By \ref{item:semitransitive}, there are $f, h\in H$ such that $fAf^{-1}\leq B_1$ and $hA_0h^{-1}\leq B_2$. 
Taking $g=f^{-1}hf$, we have that $[g^nAg^{-n},g^mAg^{-m}]=1$ for every distinct $m,n\in \Zb$.

For (iii), suppose first that for all $C \in \mc{C}$ that commute with $A_1$, we have $C \le A_2$.
Then by (ii) there are conjugates $B_i\in \mc{C}$ of $A_i$ that commute with $A_i$ for $i=1,2$.
Since $B_1$ commutes with $A_1$, it is contained in $A_2$, so $[B_1,B_2]\leq [A_2,B_2]=\triv$.
Now suppose instead that there is $C\in \mc{C}$ that commutes with $A_1$ but  $C\nleq A_2$. 
Then by joinability there is $D\in \mc{C}$ such that $\grp{A_1, A_2}\le D$.
Now, by (ii) there are conjugates $B_1,B_2\in \mc{C}$ of $D$ that commute with each other and with $D$, and these verify the statement.

For (iv), take $A\in \mc{C}$ and $g \in H$ as in part (ii) and write $A_i = g^iAg^{-i}$.  Applying \ref{item:joinable} twice, there is $B \in \mc{C}$ such that $\grp{A_0,A_1,A_2} \le B$.  Given $a,b \in A$, we have
\[
[a,b] = [[a,g],[b,g^2]],
\]
showing that $[a,b]$ belongs to the derived group of $B \cap H$.
Since $A$ is not abelian we deduce that $B \cap H$ is not abelian; by semi-transitivity it follows that $A \cap H$ is non-abelian for all $A \in \mc{C}$.
\end{proof}

\subsection{Micro-supported actions}

Micro-supported actions are natural sources of examples of families of subgroups as in Definition~\ref{def:msc:subgroups}. 

\begin{defn}
Let $G$ be a (topological) group acting faithfully on a Hausdorff space $X$.  We say the action is \defbold{micro-supported} if for all non-empty open $Y \subseteq X$, the rigid stabiliser $\rist_G(Y)$ is non-trivial.
\end{defn}

The next lemma gives sufficient conditions to achieve the conditions in Definition~\ref{def:msc:subgroups} from a permutational perspective.

\begin{lem}\label{lem:micro-supported_family}
Let $G$ be a group acting faithfully on a set $X$ and $H\le G$.
 Let $\mc{C}^*$ be a $G$-invariant set of subsets of $X$
 and $\mc{C} = \{\rist_G(Y) \mid Y \in \mc{C}^*\}$.  
 Consider the following properties:
\begin{enumerate}[(i)]
		\item For all $Y \in \mc{C}^*$ the rigid stabiliser $\rist_G(Y)$ is non-trivial.
		\item For every $g \in G \setminus \{1\}$ there exists $Z \in \mc{C}^*$ such that $Z \cap gZ = \emptyset$.
		\item For all $Y,Z \in \mc{C}^*$ there exists $g \in H$ such that $gY \subseteq Z$.
		\item For all $Y,Z \in \mc{C}^*$, if $\rist_G(Y) \nleq \rist_G(Z)$ then there exists $W \in \mc{C}^*$ such that $W \subseteq Y$ and $\rist_G(Z \cap W)=\triv$.
		\item For all $Y,Z \in \mc{C}^*$, if $\rist_G(Y \cap W_1) = \rist_G(Z \cap W_1) = \triv$ for some $W_1 \in \mc{C}^*$, then $Y \cup Z \subseteq W_2$ for some $W_2 \in \mc{C}^*$.
\end{enumerate}
If (i)--(ii) hold then $\mc{C}$ is a nomadic family of subgroups of $G$. 
If (i)--(iii) hold then $\mc{C}$ is a compression family for $H$.
If (i)--(v) hold then $\mc{C}$ is a joinable compression family for $H$.
\end{lem}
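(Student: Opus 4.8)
The plan is to verify the conditions \ref{item:nonabelian}--\ref{item:joinable} of Definition~\ref{def:msc:subgroups} for the family $\mc{C} = \{\rist_G(Y) \mid Y \in \mc{C}^*\}$ one at a time, translating each permutational hypothesis (i)--(v) into the corresponding group-theoretic statement. The first thing I would do is record the two elementary facts about rigid stabilisers that make everything work: first, $g\,\rist_G(Y)\,g\inv = \rist_G(gY)$ for all $g \in G$ and $Y \subseteq X$, so that the $G$-invariance of $\mc{C}^*$ gives conjugation-invariance of $\mc{C}$; and second, if $Y \cap Z = \emptyset$ then $[\rist_G(Y),\rist_G(Z)] = \triv$, since elements supported inside disjoint sets commute. (More care is needed here because ``support'' is only well-behaved for nice spaces; the correct statement is simply that if $Y \cap Z = \emptyset$ then every element of $\rist_G(Y)$ fixes $Z$ pointwise and vice versa, and two permutations each fixing the support of the other commute — this is purely set-theoretic and needs no topology.)

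With those in hand the verification is quick. For \ref{item:nonabelian}: hypotheses (i) and (ii) let us apply Lemma~\ref{lem:nondegen}? No — rather, we argue directly that each $\rist_G(Y)$ is non-abelian, but actually the cleanest route is: by (i) each $\rist_G(Y)$ is non-trivial, and to see non-abelianness I would note that $Y$ must contain two ``disjoint pieces'' — but $\mc{C}^*$ need not be closed under passing to subsets, so this is the subtle point. I expect the intended argument is: by (ii) applied to a non-trivial $g \in \rist_G(Y)$, there is $Z \in \mc{C}^*$ with $Z \cap gZ = \emptyset$; since $\mc{C}$ is a nomadic family once (i)--(ii) hold, and nomadic means (A)--(B), the non-abelianness (A) is part of what ``nomadic'' already packages — so in fact I would prove the ``nomadic'' claim first and get (A)--(B) together. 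For (B): given $1 \neq g \in H$, hypothesis (ii) gives $Z \in \mc{C}^*$ with $Z \cap gZ = \emptyset$, hence $[\rist_G(Z), g\,\rist_G(Z)\,g\inv] = [\rist_G(Z),\rist_G(gZ)] = \triv$, which is exactly (B) with $A = \rist_G(Z)$. For (A), one leverages (i) together with (ii): non-triviality plus the disjointness mechanism forces $\rist_G(Z)$ to be non-abelian, using that $\rist_G(Z \cap gZ)$-type arguments produce two non-commuting elements — I would look to mirror the argument inside Lemma~\ref{lem:leafless}(i)/(iv), or simply cite that the paper has set things up so (i)--(ii) $\Rightarrow$ nomadic.

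For \ref{item:semitransitive}: given $A = \rist_G(Y)$, $B = \rist_G(Z)$ with $Y,Z \in \mc{C}^*$, hypothesis (iii) gives $g \in H$ with $gY \subseteq Z$; then $gAg\inv = \rist_G(gY) \le \rist_G(Z) = B$ since $\rist$ is monotone in its argument. For \ref{item:joinable}: suppose $A = \rist_G(Y)$, $B = \rist_G(Z)$, $C = \rist_G(W')$ (with $Y,Z,W' \in \mc{C}^*$), $[A,C] = \triv$ and $C \nleq B$. From $C \nleq B$, i.e. $\rist_G(W') \nleq \rist_G(Z)$, hypothesis (iv) produces $W \in \mc{C}^*$ with $W \subseteq W'$ and $\rist_G(Z \cap W) = \triv$; meanwhile $[A,C] = \triv$ with $C = \rist_G(W') \geq \rist_G(W)$ gives $[\rist_G(Y),\rist_G(W)] = \triv$, and one then wants to deduce $\rist_G(Y \cap W) = \triv$ — this requires knowing that two commuting rigid stabilisers have trivial rigid stabiliser of the intersection, which is where faithfulness and a ``no common support'' argument enter. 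Then hypothesis (v) applied to $Y, Z$ with this $W$ (renamed $W_1$) yields $W_2 \in \mc{C}^*$ with $Y \cup Z \subseteq W_2$, whence $\grp{A,B} = \grp{\rist_G(Y),\rist_G(Z)} \le \rist_G(Y \cup Z) \le \rist_G(W_2) =: D \in \mc{C}$, giving (D).

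The main obstacle, I expect, is precisely the passage between the \emph{set-theoretic} disjointness of elements of $\mc{C}^*$ and the \emph{group-theoretic} commutation/triviality of the corresponding rigid stabilisers — in particular, the implication ``$[\rist_G(Y),\rist_G(W)] = \triv \Rightarrow \rist_G(Y \cap W) = \triv$'' needed for (D). This is not formally true for arbitrary actions (one needs enough elements to separate points, which faithfulness on a Hausdorff space — or the structure of $\mc{C}^*$ — should supply), so the real work is isolating the correct elementary lemma: if $g$ fixes the complement of $Y \cap W$ pointwise and $\rist_G(Y), \rist_G(W)$ commute elementwise, then $g$ acts trivially, hence $g = 1$ by faithfulness. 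Once that lemma is pinned down, the rest is the bookkeeping of monotonicity and conjugation of $\rist$ sketched above.
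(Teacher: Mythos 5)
There is a genuine gap, and it occurs at exactly the two places you flag but do not resolve: condition \ref{item:nonabelian} (non-abelianness of the members of $\mc{C}$) and, in the joinability step, the triviality of $\rist_G(Y_1\cap W)$. For \ref{item:nonabelian} you end by proposing to ``cite that the paper has set things up so (i)--(ii) $\Rightarrow$ nomadic'', which is circular: that implication is precisely part of the statement being proved. The missing observation, which the paper's proof makes and which settles both points at once, is this: if $V\subseteq X$ and $1\neq a\in\rist_G(V)$, then the set $Z\in\mc{C}^*$ supplied by (ii) for $a$ satisfies $Z\subseteq V$, because $a$ moves every point of $Z$ (as $Z\cap aZ=\emptyset$) while $a$ fixes $V^c$ pointwise. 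Hence $\rist_G(Z)$ is a subgroup of $\rist_G(V)$, non-trivial by (i), with $\rist_G(Z)\cap a\rist_G(Z)a\inv\le\rist_G(Z)\cap\rist_G(aZ)=\triv$; if $a$ were central in $\rist_G(V)$ it would normalise $\rist_G(Z)$ elementwise, forcing $\rist_G(Z)=\triv$, a contradiction. Thus under (i)--(ii) \emph{every} non-trivial rigid stabiliser (of any subset, not only members of $\mc{C}^*$) is non-abelian, which gives \ref{item:nonabelian} directly.

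The same fact is what closes your gap in \ref{item:joinable}. The implication you isolate, ``$[\rist_G(Y_1),\rist_G(Y_3)]=\triv\Rightarrow\rist_G(Y_1\cap W)=\triv$'', is not obtained from faithfulness or point-separation (there is no Hausdorff space here, only a set action, and your proposed elementary lemma is false without further hypotheses: a faithful abelian action gives commuting non-trivial rigid stabilisers with non-trivial intersection stabiliser). The correct route is: $\rist_G(Y_1\cap W)$ lies in both $\rist_G(Y_1)$ and $\rist_G(W)\le\rist_G(Y_3)$, which commute elementwise, so $\rist_G(Y_1\cap W)$ is abelian; by the observation above it is therefore trivial. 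With that in hand, your application of (iv) and then (v) (with $W_1=W$) and the monotonicity $\grp{\rist_G(Y_1),\rist_G(Y_2)}\le\rist_G(W_2)$ is exactly the paper's argument, and your treatment of \ref{item:disjointtranslates} and \ref{item:semitransitive} is correct. So the skeleton is right, but the proposal as written leaves the lemma's two non-routine steps unproven, and the substitute mechanism you suggest for the second one does not work.
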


\begin{proof}
We may suppose that hypotheses (i)--(ii) hold.

Given $g \in G \setminus \{1\}$ and $Z \in \mc{C}^*$ such that $Z \cap gZ = \emptyset$, then
\[
B \cap gBg\inv=[B,gBg\inv]=\triv
\]
where $B = \rist_G(Z)$. 

Moreover, given $1\neq a\in A=\rist_G(Y)$, there is $B=\rist_G(Z)$ such that $Z\cap aZ=\emptyset$. 
Thus $Z\subset Y$ and $B< A$, which means that $a$ is not central in $A$, so $A$ is not abelian. 
We deduce that $\mc{C}$ is nomadic for $G$.
It is clear that if (iii) holds, then $H$ is semi-transitive on $\mc{C}$.

Finally, suppose that (i)--(v) hold and that  $Y_1,Y_2,Y_3 \in \mc{C}^*$ are such that $\rist_G(Y_1)$ commutes with $\rist_G(Y_3)$ and $\rist_G(Y_3) \nleq \rist_G(Y_2)$.  By (iv) there exists $W \in \mc{C}^*$ such that $W \subseteq Y_3$ and $\rist_G(Y_2 \cap W)=\triv$. 
As $\rist_G(Y_1)$ commutes with $\rist_G(Y_3)$, the rigid stabiliser $\rist_G(Y_1 \cap W)$ is also abelian, hence trivial. 
Thus (v) ensures that there exists $W_2 \in \mc{C}^*$ such that $Y_1 \cup Y_2 \subseteq W_2$, from which it follows that $\grp{\rist_G(Y_1),\rist_G(Y_2)} \le \rist_G(W_2)$. 
 Thus $\mc{C}$ is joinable.
\end{proof}

The following special cases appear in examples of interest. 
		
\begin{cor}\label{cor:micro-supported_compression}
Let $X$ be a Hausdorff space and let $G \le \Homeo(X)$.  Let $\mc{C}^*$ be a $G$-invariant set of non-empty subsets of $X$, such that for all $Y \in \mc{C}^*$ and every non-empty open $Z \subseteq X$, there is $g \in G$ such that $gY \subseteq Z$.  Suppose that $\rist_G(Y) \neq \triv$ for every $Y \in \mc{C}^*$.
\begin{enumerate}[(i)]
\item The set $\mc{C}^*$ satisfies hypotheses (i)--(iv) of Lemma~\ref{lem:micro-supported_family}.  Consequently, $\mc{C} = \{\rist_G(Y) \mid Y \in \mc{C}^*\}$ is a compression family of subgroups of $G$.
\item Suppose in addition that for all $Y,Z \in \mc{C}^*$, either the union of the interiors of $Y$ and $Z$ is dense, or $Y \cup Z$ is contained in an element of $\mc{C}^*$.  Then $\mc{C}$ is a joinable compression family of subgroups of $G$.
\end{enumerate}
\end{cor}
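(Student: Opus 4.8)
The plan is to verify hypotheses (i)--(v) of Lemma~\ref{lem:micro-supported_family} for $\mc{C}^*$ with $H = G$; parts (i) and (ii) of the corollary then follow by quoting that lemma (hypotheses (i)--(iii) give the compression family, and (i)--(v) give joinability). Throughout I would use two facts. First, since $G$ acts faithfully by homeomorphisms on a Hausdorff space, for each $g \in G \setminus \{1\}$ the set $\supp(g) := \{x \in X : gx \neq x\}$ is non-empty (by faithfulness) and open (its complement is the equaliser of the continuous maps $g$ and $\id$, hence closed as $X$ is Hausdorff). Second -- the key dictionary -- under the standing hypotheses one has, for \emph{every} $A \subseteq X$,
\[
\rist_G(A) \neq \triv \iff \mathrm{int}(A) \neq \emptyset .
\]
The forward direction is immediate from the first fact: a non-trivial $h \in \rist_G(A)$ has $\supp(h)$ open, non-empty and contained in $A$. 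For the converse, fix some $Y_0 \in \mc{C}^*$ (we may assume $\mc{C}^* \neq \emptyset$); if $\mathrm{int}(A) \neq \emptyset$, then the compressibility hypothesis provides $g \in G$ with $gY_0 \subseteq \mathrm{int}(A) \subseteq A$, and then $\triv \neq \rist_G(gY_0) \leq \rist_G(A)$, using $G$-invariance of $\mc{C}^*$ and monotonicity of $\rist_G$. I would also use $\rist_G(A \cap B) = \rist_G(A) \cap \rist_G(B)$ freely.

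Hypotheses (i)--(iii) are then short. (i) is the standing assumption $\rist_G(Y) \neq \triv$. For (ii): given $g \in G \setminus \{1\}$, choose $x \in \supp(g)$ and, using that $X$ is Hausdorff, a non-empty open $O \ni x$ with $gO \cap O = \emptyset$ (intersect a neighbourhood of $x$ with the $g^{-1}$-image of a disjoint neighbourhood of $gx$); compressing $Y_0$ into $O$ gives $Z \in \mc{C}^*$ with $Z \cap gZ \subseteq O \cap gO = \emptyset$. For (iii): given $Y, Z \in \mc{C}^*$, the dictionary gives $\mathrm{int}(Z) \neq \emptyset$, so the compressibility hypothesis gives $g \in G$ with $gY \subseteq \mathrm{int}(Z) \subseteq Z$. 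By Lemma~\ref{lem:micro-supported_family}, $\mc{C}$ is already a compression family for $G$; it remains to verify (iv) (to complete part (i) as stated) and (v) (for part (ii)).

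The hard part will be hypothesis (iv). Suppose $Y, Z \in \mc{C}^*$ with $\rist_G(Y) \nleq \rist_G(Z)$, and pick $h \in \rist_G(Y) \setminus \rist_G(Z)$. Then $\supp(h)$ is a non-empty open subset of $\mathrm{int}(Y)$, and $\supp(h) \nsubseteq Z$ because $h$ does not fix $X \setminus Z$ pointwise. The aim is to produce a non-empty open $V \subseteq \mathrm{int}(Y)$ with $V \cap \mathrm{int}(Z) = \emptyset$: granting this, compressing $Y_0$ into $V$ yields $W \in \mc{C}^*$ with $W \subseteq V \subseteq Y$ and $\mathrm{int}(Z \cap W) = \mathrm{int}(Z) \cap \mathrm{int}(W) \subseteq \mathrm{int}(Z) \cap V = \emptyset$, so $\rist_G(Z \cap W) = \triv$ by the dictionary -- which is exactly (iv). Producing $V$ is the delicate step: the natural candidate is $V = \supp(h) \setminus \overline{Z}$, which does the job as soon as it is non-empty, and the effort would go into the remaining configuration, where $\supp(h)$ meets $X \setminus Z$ but is contained in $\overline{Z}$, checking that the hypotheses on $\mc{C}^*$ either exclude it or let one relocate $h$ (or $W$) appropriately.

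Finally, for part (ii) I would add the extra hypothesis (for all $Y, Z \in \mc{C}^*$, either the union of the interiors of $Y$ and $Z$ is dense, or $Y \cup Z$ lies in an element of $\mc{C}^*$) and verify (v). If $W_1 \in \mc{C}^*$ satisfies $\rist_G(Y \cap W_1) = \rist_G(Z \cap W_1) = \triv$, the dictionary gives $\mathrm{int}(Y) \cap \mathrm{int}(W_1) = \mathrm{int}(Z) \cap \mathrm{int}(W_1) = \emptyset$, so the non-empty open set $\mathrm{int}(W_1)$ is disjoint from $\mathrm{int}(Y) \cup \mathrm{int}(Z)$; hence this union is not dense, and the extra hypothesis forces $Y \cup Z \subseteq W_2$ for some $W_2 \in \mc{C}^*$. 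That is (v), so Lemma~\ref{lem:micro-supported_family} now gives that $\mc{C}$ is a joinable compression family, completing part (ii).
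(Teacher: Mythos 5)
Your overall strategy is the paper's own: reduce to Lemma~\ref{lem:micro-supported_family} and verify its hypotheses, using the observation that $\supp(g)$ is open in a Hausdorff space, so that (given the compressibility hypothesis) $\rist_G(A)\neq\triv$ exactly when $A$ has non-empty interior. Your verifications of hypotheses (i)--(iii) and (v) are correct and essentially identical to the argument in the paper.

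The problem is hypothesis (iv), which you do not actually prove. You correctly reduce it to producing a non-empty open $V\subseteq Y$ with $V\cap Z^o=\emptyset$, note that $V=\supp(h)\setminus\overline{Z}$ works when it is non-empty, and then leave the ``remaining configuration'' ($\supp(h)$ meets $X\setminus Z$ but lies in $\overline{Z}$) as something to be excluded or worked around later. That residual case is precisely where the content of (iv) sits, and it cannot be excluded from the stated hypotheses alone. For instance, take $X$ the Cantor space, $G=\Homeo(X)$, and $\mc{C}^*$ the set of all proper non-empty clopen subsets together with all sets of the form (proper non-empty clopen set) minus one point; all hypotheses of the corollary hold, yet for $Y=K$ clopen and $Z=K\setminus\{p\}$ one has $\rist_G(Y)\nleq\rist_G(Z)$ while every non-empty open subset of $Y$ meets $Z^o$, so no $W$ as in (iv) exists. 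The paper's proof dispatches this step in one line: it picks $r\in\rist_G(Y)$ and $x\in Y\setminus Z$ with $rx\neq x$, takes a neighbourhood $Y_0$ of $x$ with $Y_0\cap Z^o=\emptyset$ and $rY_0\cap Y_0=\emptyset$ (the latter forcing $Y_0\subseteq Y$), and compresses $Y$ into $Y_0$; this implicitly uses that the moved point can be separated from $Z^o$, i.e.\ $x\notin\overline{Z^o}$, which is immediate when the members of $\mc{C}^*$ are clopen (or contain the closures of their interiors), as in all the applications in the paper, but is exactly the point your ``remaining configuration'' isolates. So as written your proposal is incomplete: to finish it you must either supply that separation step (justifying why a moved point outside $Z$ avoids $\overline{Z^o}$ in your setting, e.g.\ under a regularity assumption on the members of $\mc{C}^*$) or follow the paper in taking it for granted; it is not, as the example shows, a formal consequence of the corollary's hypotheses, so the hoped-for ``exclusion or relocation'' cannot be carried out in general.
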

\begin{proof}
	
	(i). Note that $X$ being Hausdorff implies that $\supp(g)=\{x\in X\mid g(x)\neq x\}$ is open for every $g\in G$ and so $\rist_G(Y)=\rist_G(Y^o)$ for every $Y\in \mc{C^*}$. 
	In particular, $\rist_G(Y)\neq \triv$ for all $Y\in \mc{C}^*$ implies that every $Y$ has non-empty interior. 
	This together with the other assumption in the statement easily implies hypotheses (i)--(iii) of Lemma \ref{lem:micro-supported_family}.
	Moreover, since every $Y\in \mc{C}^*$ is mapped into every open subset of $X$, no element of $\mc{C}^*$ can be dense in $X$.  
	To see that (iv) holds, suppose that $\rist_G(Y)\nleq\rist_G(Z)$ for some $Y, Z\in\mc{C}^*$. 
	Then there is some $r \in \rist_G(Y)$ and $x \in Y \setminus Z^o$ such that $rx \neq x$.  We see that indeed there is a neighbourhood $Y_0$ of $x$ such that $Y_0 \cap Z^o = \emptyset$ and $rY_0 \cap Y_0 = \emptyset$, ensuring $Y_0 \subseteq Y$.  Then there is $g \in G$ such that $g(Y) \subseteq Y_0$.
	Taking $W:=g(Y)$ verifies hypothesis (iv).
	
	(ii). To see that hypothesis (v) is verified, let $Y, Z\in\mc{C}^*$ such that $\rist_G(Y\cap W_1)=\rist_G(Z\cap W_1)$ for some $W_1\in \mc{C}^*$. 
	If $Y\cap W_1$ has non-empty interior, there is some $g\in G$ taking $Y$ into $Y\cap W_1$, so $1\neq g\rist_G(Y)g^{-1}\leq \rist_G(Y\cap W_1)$, a contradiction. 
	Thus $Y\cap W_1$ and similarly $Z\cap W_1$ have empty interior, so $Y^o\cup Z^o$ cannot be dense in $X$. 
\end{proof}

\subsection{Compression families and the monolith}\label{sec:compressible_splittable}

We now establish some consequences of the existence of a compression family of subgroups for normal subgroup structure.
	
\begin{lem}\label{lem:double_commutator:subgroups:bis}
	Let $G$ be a group and let $\mc{C}$ be a compression family of subgroups of $G$.
	\begin{enumerate}[(i)]
		\item Given $A,B \in \mc{C}$ there is $g \in G$ such that $[A,gBg\inv]=\triv$.
		\item Let $M$ be the monolith of $G$.  Then for all $B \in \mc{C}$, we have $\Der(B) \le M$ and there is $g \in M$ such that $[B,gBg\inv]=\triv$.  In particular, $M$ is the non-abelian group
		\[
		M = \grp{\Der(A) \mid A \in \mc{C}}.
		\]
	\end{enumerate}
\end{lem}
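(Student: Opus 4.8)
\emph{Strategy, and part (i).} Everything rests on a single ``sandwiching'' observation. By \ref{item:disjointtranslates}, for any $x \in G \setminus \{1\}$ there is $A_\ast \in \mc{C}$ with $[A_\ast, xA_\ast x\inv] = \triv$, and $xA_\ast x\inv$ again lies in $\mc{C}$ since $\mc{C}$ is conjugation-invariant; meanwhile \ref{item:semitransitive} lets us conjugate any prescribed member of $\mc{C}$ inside $A_\ast$ and any other inside $xA_\ast x\inv$. So for part~(i), given $A,B \in \mc{C}$ I fix such a pair $A_\ast, xA_\ast x\inv$, choose $g_1,g_2 \in G$ with $g_1 A g_1\inv \le A_\ast$ and $g_2 B g_2\inv \le xA_\ast x\inv$, and note $[g_1 A g_1\inv, g_2 B g_2\inv] \le [A_\ast, xA_\ast x\inv] = \triv$; conjugating by $g_1\inv$ gives $[A, gBg\inv] = \triv$ with $g := g_1\inv g_2$. (If $G = \{1\}$ the statement is degenerate, so I assume $G \neq \{1\}$, whence $\mc{C} \neq \emptyset$ by \ref{item:disjointtranslates}.)

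\emph{The monolith and the shifting element.} The core of part~(ii) is the claim that $\Der(A) \le K$ for every non-trivial normal subgroup $K \normal G$ and every $A \in \mc{C}$. To prove it I pick $k \in K \setminus \{1\}$, use \ref{item:disjointtranslates} to find $A_\ast \in \mc{C}$ with $[A_\ast, kA_\ast k\inv] = \triv$, and observe that for $a,b \in A_\ast$ the element $ka\inv k\inv \in kA_\ast k\inv$ commutes with $b$; Lemma~\ref{lem:commutator_product}(i), applied with $H = A_\ast$ (which normalises the normal subgroup $K$), then yields $[a,b] \in K$, so $\Der(A_\ast) \le K$. For general $A \in \mc{C}$, choosing $f$ with $fAf\inv \le A_\ast$ gives $f\Der(A)f\inv = \Der(fAf\inv) \le \Der(A_\ast) \le K$, hence $\Der(A) \le K$ by normality of $K$. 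Consequently $N := \grp{\Der(A) \mid A \in \mc{C}}$ is normal (as $\mc{C}$ is conjugation-invariant) and non-trivial (each $A$ is non-abelian by \ref{item:nonabelian}), and is contained in every non-trivial normal subgroup; so $N$ is the monolith $M$, giving $\Der(B) \le M$ for all $B \in \mc{C}$ and $M = \grp{\Der(A) \mid A \in \mc{C}}$. For the shifting element: given $B \in \mc{C}$, pick $m \in M \setminus \{1\}$, take $A_\ast \in \mc{C}$ with $[A_\ast, mA_\ast m\inv] = \triv$ and $f$ with $fBf\inv \le A_\ast$; then $[fBf\inv,\, m\,fBf\inv\,m\inv] \le [A_\ast, mA_\ast m\inv] = \triv$, and conjugating by $f\inv$ gives $[B,(f\inv mf)B(f\inv mf)\inv]=\triv$, with $f\inv mf \in M$ since $M \normal G$, so $g := f\inv mf$ works.

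\emph{Non-abelianness --- the main obstacle.} I expect the assertion that $M$ is non-abelian to require the most care, and would argue by contradiction. Suppose $M$ is abelian. First $M \nleq Z(G)$: otherwise \ref{item:disjointtranslates} applied to some $m \in M\setminus\{1\} \subseteq Z(G)$ produces $A_\ast \in \mc{C}$ with $[A_\ast,A_\ast]=[A_\ast,mA_\ast m\inv]=\triv$, contradicting \ref{item:nonabelian}. Next, pick $m \in M \setminus \{1\}$ and $A \in \mc{C}$ with $[A, mAm\inv]=\triv$. For $b \in A$, set $m^b := bmb\inv \in M$ (using $M \normal G$); expanding $b(mam\inv)b\inv = mam\inv$ gives $bab\inv = vav\inv$ for all $a\in A$, where $v := (m^b)\inv m = [b,m\inv]\in M$. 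Hence $v\inv b$ centralises $A$, so $b \in M\,\CC_G(A)$; thus $A \subseteq M\,\CC_G(A)$, whence $A/Z(A)$ embeds into the abelian group $M/(M\cap \CC_G(A))$ and $A$ is nilpotent of class $2$ --- and by \ref{item:semitransitive} so is every member of $\mc{C}$, with $\Der(A) \le Z(A)\cap M$ for all $A \in \mc{C}$. Completing the contradiction from here is the step I expect to be genuinely delicate: the natural route is to use that $M$ is a simple $\bZ G$-module (note $M^G = \triv$, since $M \nleq Z(G)$ and $M$ is minimal normal), on which every $A \in \mc{C}$ fixes the non-trivial subgroup $\Der(A)$ pointwise, and to play this off against \ref{item:disjointtranslates} and \ref{item:semitransitive}; this is the crux.
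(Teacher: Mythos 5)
Parts (i) and the main body of part (ii) of your proposal are correct and follow essentially the paper's own route: for (i) one commuting pair of conjugates from \ref{item:disjointtranslates} is combined with \ref{item:semitransitive} to place $A$ and $B$ inside it; for (ii) the double-commutator trick via Lemma~\ref{lem:commutator_product}(i) gives $\Der(A_\ast)\le K$ for one member and one non-trivial normal $K$, conjugation-invariance together with \ref{item:semitransitive} spreads this to all of $\mc{C}$, so $\grp{\Der(A)\mid A\in\mc{C}}$ is a non-trivial normal subgroup contained in every non-trivial normal subgroup and hence equals $M$; and your production of $g\in M$ with $[B,gBg\inv]=\triv$ (apply \ref{item:disjointtranslates} to some $m\in M\setminus\{1\}$ and conjugate) matches the paper's use of the fact that $M$ is itself a non-trivial normal subgroup.

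The genuine gap is the assertion, which is part of the statement, that $M$ is non-abelian: you explicitly stop short of proving it, and the closing module-theoretic remarks are a plan, not an argument. Note that your reduction (if $M$ were abelian then every $A\in\mc{C}$ is nilpotent of class two with $\Der(A)\le Z(A)\cap M$; incidentally, the set $M\,\CC_G(A)$ need not be a subgroup, but the intended homomorphism $b\mapsto v_b$ modulo $M\cap\CC_G(A)$ does give this conclusion) is essentially the same reduction that underlies the paper's treatment of this point: the paper observes that there is $g\in M$ with $[B,gBg\inv]=\triv$ and deduces non-abelianness from the claim that $\Der(B)$ is not normal in $M$ --- indeed, if $M$ were abelian then $g$ would centralise $\Der(B)\le M$, so $\Der(B)=\Der(gBg\inv)$ would centralise $B$, i.e.\ $\Der(B)\le Z(B)$, which is exactly the configuration you reach. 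So you have reproduced the reduction but not excluded the residual case; what is missing is precisely a proof that $\Der(B)$ cannot be normal (equivalently, central) in $M$, i.e.\ that the ``class-two members with derived subgroups inside an abelian monolith'' scenario is impossible under (A)--(C). Be aware that this is also the one step the paper compresses into a single unproved sentence, so you cannot close your gap merely by citation: you need to supply (or extract from the authors) the argument for that claim, and until you do, your proof of the displayed equality together with ``$M$ is non-abelian'' is incomplete.
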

	
\begin{proof}
		For (i), given $g_0 \in G \setminus \{1\}$, there is some $C \in \mc{C}$ such that $[C,g_0Cg\inv_0]=\triv$ by condition (A); clearly also $[C',g_0C'g\inv_0]=\triv$, where $C'$ is any subgroup of $C$.  By condition (B), after conjugating $g_0$ we can in fact choose $C'$ to be any element of $\mc{C}$; in particular, there is $g_1 \in G \setminus \{1\}$ such that $[A,g_1Ag\inv_1]=\triv$.  There is also $g_2 \in G$ such that $g_2Bg\inv_2 \le A$, and hence $[A,gBg\inv]=\triv$ where $g = g_1g_2$.
		
		Let $N$ be a non-trivial normal subgroup of $G$.  By Lemma~\ref{lem:commutator_product}(i) and condition (A) there exists $B \in \mc{C}$ and $g \in N$ such that $\Der(B) \le N$ and $[B,gBg\inv]=\triv$.  For a fixed normal subgroup $N$, these properties of $B$ are clearly $G$-invariant and preserved by passing to a subgroup.  Thus by condition (B), we see that for all $B \in \mc{C}$ we have $\Der(B) \le N$ and $g \in N$ such that $[B,gBg\inv]=\triv$.  Since there is no dependence on $N$, it follows that $\Der(B) \le M$ for all $B \in \mc{C}$; the other inclusion follows from the fact that  $\{D(A) \mid A\in\mc{C}\}$ is a $G$-invariant set of subgroups of $G$.
		Since $\Der(B)$ is not normal in $M$, we deduce that $M$ is non-abelian.    Since $M$ is itself a non-trivial normal subgroup, for all $B \in \mc{C}$ there exists $g \in M$ such that $[B,gBg\inv]=\triv$.  This completes the proof of (ii).
\end{proof}

If in addition the group $\grp{\mc{C}}$ generated by $\mc{C}$ also acts semi-transitively on $\mc{C}$, we can characterise $\Der(\grp{\mc{C}})$ among the normal subgroups of $G$.

\begin{prop}\label{prop:compressible_splittable}
Let $G$ be a group with a compression family $\mc{C}$ and let $H = \grp{\mc{C}}$.  Suppose that $H$ acts semi-transitively on $\mc{C}$ by conjugation.  Then $\Der(H)$ acts transitively on every $H$-orbit on $\mc{C}$.  Moreover, $K = \Der(H)$ is the smallest non-trivial $H$-invariant subgroup $K$ of $G$ with the following property:
	
	$(*)$ There is $A \in \mc{C}$, such that for all $g \in G$ there exists $k \in K$ such that $kAk\inv$ commutes with $gAg\inv$.
\end{prop}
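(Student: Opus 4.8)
The plan is to prove the transitivity assertion first, use it to check that $\Der(H)$ itself has property $(*)$, and then establish minimality. For the \emph{transitivity assertion}, I would fix $A \in \mc{C}$ and prove that $H = \Der(H)\,A$. This suffices: if $B = hAh\inv$ lies in the $H$-orbit of $A$, write $h = da$ with $d \in \Der(H)$ and $a \in A$, so $B = d(aAa\inv)d\inv = dAd\inv$. To see $H = \Der(H)\,A$ (which is a subgroup since $\Der(H) \normal H$), it is enough that every generator of $H$ lies in it, and as $H = \grp{\mc{C}}$ one only needs $x \in \Der(H)\,A$ for $x$ in an arbitrary $C \in \mc{C}$. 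By semi-transitivity of $H$ there is $g \in H$ with $gCg\inv \le A$, hence $gxg\inv \in A$; and $x$ is $H$-conjugate to $gxg\inv$, so the two have the same image in $H/\Der(H)$, giving $x \in \Der(H)\,gxg\inv \subseteq \Der(H)\,A$. Since $A$ was arbitrary this handles every $H$-orbit.

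To check that $\Der(H)$ \emph{satisfies} $(*)$: it is non-trivial (as $\Der(A) \ne \triv$ for any $A \in \mc{C}$, by \ref{item:nonabelian}) and normal in $H$, hence $H$-invariant. Given $A \in \mc{C}$ and $g \in G$, note $gAg\inv \in \mc{C}$; take a commuting pair $A_0, C_0 \in \mc{C}$ from Lemma~\ref{lem:nondegen} and, by semi-transitivity of $H$, elements $p, q \in H$ with $pAp\inv \le A_0$ and $q\,gAg\inv\,q\inv \le C_0$. Then $pAp\inv$ commutes with $q\,gAg\inv\,q\inv$, so conjugating by $q\inv$ shows that $hAh\inv$ commutes with $gAg\inv$, where $h := q\inv p \in H$. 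By the transitivity assertion there is $k \in \Der(H)$ with $kAk\inv = hAh\inv$, whence $[kAk\inv, gAg\inv] = \triv$; as $g$ is arbitrary, $(*)$ holds with witness $A$.

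For \emph{minimality}, let $K \le G$ be non-trivial, $H$-invariant and satisfy $(*)$ with witness $A$; I want $\Der(H) \le K$. Step one is $[A, C'] \subseteq K$ for every $C' \in \mc{C}$: by semi-transitivity there is $g \in H$ with $C' \le gAg\inv$, and $(*)$ gives $k \in K$ with $[kAk\inv, gAg\inv] = \triv$, so $kAk\inv$ centralises $C'$; hence for $a \in A$ and $y \in C'$ the element $ka\inv k\inv$ commutes with $y$, and Lemma~\ref{lem:commutator_product}(i) (applicable as $H$ normalises $K$) gives $[a,y] \in K$. Step two: for arbitrary $C, C' \in \mc{C}$, semi-transitivity gives $h \in H$ with $C \le hAh\inv$, and then $[C,C'] \subseteq [hAh\inv, C'] = h\,[A, h\inv C' h]\,h\inv \subseteq hKh\inv = K$, using step one, $h\inv C' h \in \mc{C}$, and the $H$-invariance of $K$. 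So $[C,C'] \subseteq K$ for all $C, C' \in \mc{C}$; since $K \cap H \normal H$ and $\mc{C}$ generates $H$, the images of the members of $\mc{C}$ generate the abelian quotient $H/(K\cap H)$, whence $\Der(H) \le K\cap H \le K$. (One may instead iterate Lemma~\ref{lem:commutator_product}(ii) to pass directly from $[A,C'] \subseteq K$ to $[H,H] \le K$.) Together with the previous paragraph this identifies $\Der(H)$ as the smallest non-trivial $H$-invariant subgroup of $G$ with property $(*)$.

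The one point requiring care — the \emph{main obstacle} — is that $K$ is assumed invariant only under $H$, not under $G$, so all the conjugations used to bring $A$ into a commuting position, or to compare $A$ with an arbitrary member of $\mc{C}$, must be effected by elements of $H$ (which is exactly why semi-transitivity of $H$, rather than merely of $G$, enters), and the resulting relations must be routed through Lemma~\ref{lem:commutator_product}, whose hypotheses are precisely about subgroups normalised by the relevant elements. With conjugation in $G$ available the single witness $A$ and its $G$-conjugates would carry all the information and the argument would be routine.
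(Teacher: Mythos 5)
Your proposal is correct, and it diverges from the paper in an interesting way on the transitivity step. The paper proves that $gAg\inv$ is a $\Der(H)$-conjugate of $A$ by an inductive commutator computation: for a generator $g$ lying in some $B\in\mc{C}$ it invokes Lemma~\ref{lem:double_commutator:subgroups:bis}(i) to find $h\in H$ with $[A,hBh\inv]=\triv$, so that $gAg\inv=[g,h]A[h,g]$, and then factors a general $g=g_1\cdots g_n$ and iterates; this uses the compression properties \ref{item:nonabelian}--\ref{item:disjointtranslates} through that lemma. You instead prove the cleaner decomposition $H=\Der(H)\,A$, using only that $H=\grp{\mc{C}}$ is semi-transitive on $\mc{C}$ and that $H$-conjugate elements agree modulo $\Der(H)$; the transitivity of $\Der(H)$ on each $H$-orbit falls out immediately, and your argument shows this part needs no compression hypotheses at all. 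Your verification that $\Der(H)$ satisfies $(*)$ (commuting pair from Lemma~\ref{lem:nondegen}, semi-transitivity of $H$ to place $A$ and $gAg\inv$ into it, then transitivity to replace the $H$-conjugator by one in $\Der(H)$) is in fact a more carefully spelled-out version of the paper's terse appeal to Lemma~\ref{lem:double_commutator:subgroups:bis}(i). The minimality argument is essentially the paper's: both reduce to showing all commutators of elements of members of $\mc{C}$ lie in $K$ via Lemma~\ref{lem:commutator_product}(i), the only cosmetic difference being that the paper first upgrades the single witness $A$ of $(*)$ to every member of $\mc{C}$ using $H$-invariance and semi-transitivity, whereas you route every pair $C,C'\in\mc{C}$ through $H$-conjugates of $A$ directly; your final passage from pairwise-commuting generators of $H/(K\cap H)$ to $\Der(H)\le K$ is equivalent to the paper's use of Lemma~\ref{lem:commutator_product}(ii).
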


\begin{proof}
Let $S = \bigcup \mc{C}$.  Given $A \in \mc{C}$, we must show that every $H$-conjugate of $A$ is a $\Der(H)$-conjugate of $A$.  First consider $g \in B \in \mc{C}$.  By Lemma~\ref{lem:double_commutator:subgroups:bis}(i) there is $h \in H$ such that $[A,hBh\inv]=\triv$; in particular, $hg\inv h\inv$ commutes with $A$, so that $[g,h]A[h,g] = gAg\inv \in \mc{C}$.  Given a general element $g \in H$, for some $n \in \Nb$ we can write $g = g_1g_2 \dots g_n$, where $g_i \in S$ for $1 \le i \le n$.  
Repeating the above inductively with $g_i\dots g_nAg_n^{-1}\dots g_i^{-1}$ in place of $A$, we obtain $h_1,h_2,\dots,h_n \in H$ such that
		\[
		g_1g_2 \dots g_nAg\inv_n \dots g\inv_2 g\inv_1 = [g_1,h_1][g_2,h_2] \dots [g_n,h_n]A[h_n,g_n] \dots [h_2,g_2][h_1,g_1],
		\]
showing that $gAg\inv$ is a $\Der(H)$-conjugate of $A$ as claimed.  Applying Lemma~\ref{lem:double_commutator:subgroups:bis}(i) again, we see that $\Der(H)$ satisfies $(*)$.

Now let $K$ be a non-trivial $H$-invariant subgroup of $G$ satisfying $(*)$.  Since $K$ is $H$-invariant, the condition put on $A$ in the statement of $(*)$ is stable under replacing $A$ with an $H$-conjugate of a subgroup of $A$; thus by condition (B), it is the case that for all $A \in \mc{C}$ and $g \in H$, there exists $k \in K$ such that $[kAk\inv,gAg\inv] = \triv$.  In turn, given $B \in \mc{C}$, then every $H$-conjugate of $B$ is contained in a $H$-conjugate of $A$; thus for all $A,B \in \mc{C}$ and $g \in H$, there exists $k \in K$ such that $[kAk\inv,gBg\inv] = \triv$.  In particular, taking $g=1$, then given $a \in A$ and $b \in B$ we have $k \in K$ such that $ka\inv k\inv$ commutes with $b$, ensuring
\[
[a,b] = [[a,k],b] \in K.
\]
Thus $\ngrp{[s,t] \mid s,t \in S} \le K$.  By Lemma~\ref{lem:commutator_product}(ii), it follows that
		\[
		\ngrp{[s,t] \mid s,t \in S} = \ngrp{[s,t] \mid s,t \in \grp{S}} = \Der(H),
		\]
so $\Der(H) \le K$, proving that $\Der(H)$ is the smallest normal subgroup of $G$ satisfying $(*)$.
\end{proof}

We now prove the first main theorem from the introduction.

\begin{proof}[Proof of Theorem~\ref{thm:compressible_splittable_union}]
It is clear that $\Der(H) \neq \triv$.
Let $M$ be the intersection of all non-trivial normal subgroups of $G$; note that $M \le \Der(H)$.  
By Lemma~\ref{lem:double_commutator:subgroups:bis}(ii) we have $\Der(A) \le M$ for all $A \in \mc{C}$, so in particular $M \neq \triv$.
		
		Similar to the proof of Proposition~\ref{prop:compressible_splittable}, to show $\Der(H) = M$ it is enough to show, given $A,B \in \mc{C}$ and $a \in A, b \in B$, that $[a,b] \in M$.  If $A \le B$ this has been shown in Lemma~\ref{lem:double_commutator:subgroups:bis}(ii), so assume $A \nleq B$.  By Lemma~\ref{lem:double_commutator:subgroups:bis}(ii) there exists $h \in M$ such that $[A,hAh\inv] = \triv$.  Then $A$ commutes with $hAh\inv$ but is not contained in $B$.  By joinability, there exists $C \in \mc{C}$ such that $\grp{hAh\inv, B} \le C$.  In particular, we see that $[hAh\inv,B] \subseteq \Der(C) \le M$, so $[hah\inv,b] \in M$ for all $a \in A$ and $b \in B$. 
		Since $[h,b], [h\inv,b]\in M$, we deduce applying Lemma~\ref{lem:commutator_product}(ii)  twice
		that $[h\inv hah\inv h, b]=[a,b]\in M$ for all $a \in A$ and $b \in B$ as claimed, and hence that $\Der(H) = M$, completing the proof of (i).
		
		For part (ii), we assume that $H$ is semi-transitive on $\mc{C}$. 
		Let $M^*$ be the intersection of all non-trivial normal subgroups  of $\Der(H)$. 
		By Proposition~\ref{prop:compressible_splittable} we know that the action of $\Der(H)$ on $\mc{C}$ is semi-transitive, and then by Lemma~\ref{lem:leafless}, the intersection $A \cap \Der(H)$ is non-abelian for all $A \in \mc{C}$.    A similar argument to Lemma~\ref{lem:double_commutator:subgroups:bis}(ii) now shows that, for all $A \in \mc{C}$, we have $\Der(A \cap \Der(H)) \le M^*$.  Thus $M^*$ is non-trivial.  By construction, $M^*$ is characteristic in $\Der(H)$, hence normal in $G$; since $\Der(H)$ is already the monolith of $G$, we deduce that $M^* = \Der(H)$, so $\Der(H)$ is simple.
		
		For (iii), it suffices to show that $\Der(H)\geq H=\grp{\mc{C}}$.
		Given any $A\in \mc{C}$, by condition \ref{item:translation} there is $g\in H$ and a homomorphism $(-)^\bullet: \prod_{i \ge 0}A \rightarrow H$, such that $A$ is the image of the first factor, and $g$ acts on the image of $(-)^\bullet$ as a shift map.  We then obtain $a$ as the commutator $bgb^{-1}g^{-1}$ where $b = (a,a,a,\dots)^\bullet$.
\end{proof}

Combining Corollary~\ref{cor:micro-supported_compression} with Theorem~\ref{thm:compressible_splittable_union} yields Corollary~\ref{cor:fully_compressible_splittable}.

\subsection{Compressible piecewise full groups}\label{sec:pw_full}

We now consider piecewise full groups.  Note that for any group $G$ of homeomorphisms we have $\Full(\Full(G)) = \Full(G)$, so in studying piecewise full groups, there is no loss of generality in assuming $G = \Full(G)$.

Given $g \in G$ and a clopen subset $Y$ of $X$ such that $Y \cap gY = \emptyset$, write $\sigma(g,Y)$ for the following homeomorphism:
	\[
	\sigma(g,Y)x = 
	\begin{cases}
		gx &\mbox{if} \;  x \in Y \\ 
		g\inv x &\mbox{if} \;  x \in gY\\
		x &\text{otherwise}
	\end{cases}.
	\]

\begin{lem}\label{lem:pw_full:split}
Let $X$ be a zero-dimensional Hausdorff space with at least three points and let $G \le \Homeo(X)$ such that $G = \Full(G)$.  Then $G = SS$ where $S$ is the union of all rigid stabilisers in $G$ of proper clopen subsets of $X$.
\end{lem}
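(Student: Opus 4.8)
The plan is to take an arbitrary $g \in G = \Full(G)$ and decompose it as a product $s_1 s_2$ with each $s_i$ supported on a proper clopen subset of $X$. The key point is that a piecewise full element is, by definition, locally given by elements of $G$, and once we have a sufficiently fine clopen partition witnessing this, we can ``fold'' the pieces into a disjoint configuration using the $\sigma(h, Y)$ construction, which trades a homeomorphism $h$ defined on $Y$ for an element supported on $Y \cup hY$.

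First I would choose, by the definition of $\Full(G)$, a finite clopen partition $\mc{P} = \{P_1, \dots, P_n\}$ of $X$ (finite refinement is possible because $X$ is compact — wait, $X$ is only assumed Hausdorff and zero-dimensional here, so I should instead argue more carefully) together with elements $h_i \in G$ such that $g$ agrees with $h_i$ on $P_i$. The subtlety is that $X$ need not be compact, so I would not assume $\mc{P}$ is finite; instead, the right move is to work directly with the graph of $g$ and exploit zero-dimensionality to find, for each point $x$, a clopen neighbourhood on which $g$ looks like a single element of $G$, and then reorganise. Concretely, I would aim to produce a single clopen set $Y$, proper in $X$, such that $g$ maps $Y$ into $Y^c$ after correcting by an element supported off $Y$; then $g = \sigma(g|_{\text{something}}, Y) \cdot (\text{remainder})$ with the remainder supported on a proper clopen set. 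The cleanest formulation: find disjoint proper clopen $Y, Z$ with $gY \cap Y = \emptyset$ and such that $g$ restricted to $Y^c$ is ``simple enough'' — here is where having at least three points is used, to guarantee room to separate the support.

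The main obstacle I anticipate is handling the part of $g$ that moves points ``globally'' — i.e., showing that after peeling off one factor supported on a proper clopen set, the residual homeomorphism is itself supported on a proper clopen set rather than all of $X$. The trick, standard in this area (cf. the treatment of such decompositions in \cite{Nekra}, \cite{MatuiSimple}), is: given the local data $(P_i, h_i)$, pick one part $P_1$ and a clopen $Q \subseteq P_1^c$ large enough to contain $g(P_1^c) \cap \text{(the bad set)}$; using $G = \Full(G)$ and zero-dimensionality one can build $s_1 \in \rist_G(P_1^c)$ (or a rigid stabiliser of a slightly smaller proper clopen set) agreeing with $g$ on $P_1^c$, so that $s_1^{-1} g$ is supported on $P_1$, which is proper. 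If $P_1^c$ itself is not proper — i.e., $P_1 = \emptyset$ — one reduces to a smaller partition. Thus $g = s_1 (s_1^{-1} g)$ with $s_1 \in \rist_G(P_1^c)$ and $s_1^{-1}g \in \rist_G(P_1)$, and since $|X| \ge 3$ we can always arrange both $P_1$ and $P_1^c$ (or the relevant supports) to be proper non-empty clopen sets. Finally I would note that $\rist_G(P_1^c)$ and $\rist_G(P_1)$ are both contained in $S$, giving $g \in SS$ and hence $G = SS$; the reverse inclusion $SS \subseteq G$ is trivial.
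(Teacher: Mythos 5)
Your concluding decomposition is where the argument breaks down. You want $s_1 \in \rist_G(P_1^c)$ (so $s_1$ fixes $P_1$ pointwise) that agrees with $g$ on $P_1^c$. But any homeomorphism fixing $P_1$ pointwise maps $P_1^c$ onto $P_1^c$, so agreement with $g$ on $P_1^c$ forces $g(P_1^c)=P_1^c$, i.e.\ $g$ must preserve the clopen partition $\{P_1,P_1^c\}$. A general element of a piecewise full group preserves no proper non-empty clopen set at all (take $G=\Full(\langle g\rangle)$ for $g$ a minimal homeomorphism of the Cantor space: the element $g$ itself is such an element), so the factorisation $g=s_1(s_1^{-1}g)$ with $s_1\in\rist_G(P_1^c)$ and $s_1^{-1}g\in\rist_G(P_1)$ simply does not exist in general. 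The auxiliary clopen $Q\subseteq P_1^c$ ``containing the bad part of $g(P_1^c)$'' cannot repair this, since the bad part lies in $P_1$, outside $P_1^c$. The worry about non-compactness and infinite partitions is a red herring: no partition of $X$ is needed anywhere.

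The idea you gesture at and then abandon --- folding via $\sigma(\cdot,Y)$, with \emph{overlapping} rather than complementary supports --- is exactly the paper's proof, and it is short: for $g\neq 1$ pick $x$ with $gx\neq x$ and, using zero-dimensionality, Hausdorffness and $|X|\ge 3$, a clopen neighbourhood $Y$ of $x$ with $Y\cap gY=\emptyset$ and $Y\cup gY\neq X$ (the third point, together with its $g$-preimage, is what you must avoid when shrinking $Y$; this is the only use of $|X|\ge3$). Then $h=\sigma(g,Y)\in G$ because $G=\Full(G)$, and $h$ lies in the rigid stabiliser of the proper clopen set $Y\cup gY$, while $gh^{-1}$ fixes $gY$ pointwise and hence lies in the rigid stabiliser of the proper clopen set $(gY)^c$. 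Thus $g=(gh^{-1})h\in SS$. The two supports overlap, which is precisely what makes the factorisation possible without $g$ preserving any clopen set; insisting on complementary supports, as your final step does, is the genuine gap.
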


\begin{proof}
Let $g \in G \setminus \triv$.  Then there exists $x \in X$ such that $gx \neq x$, and hence a clopen neighbourhood $Y$ of $x$ such that $Y \cap gY = \emptyset$.  By choosing $Y$ small enough we may ensure that $Y \cup gY \neq X$.  Let $h = \sigma(g,Y)$; since $G = \Full(G)$ we have $h \in G$. 
 We see that in fact $h \in S$ (since $Y \cup gY$ is a proper clopen subset of $X$) and also $gh\inv \in S$ (since $gh\inv$ fixes $gY$ pointwise), so $g \in SS$. 
\end{proof}

\begin{lem}\label{lem:pw_full}
Let $X$ be an infinite zero-dimensional Hausdorff space and let $G \le \Homeo(X)$ such that $G = \Full(G)$.  Suppose the action of $G$ is compressible.
\begin{enumerate}[(i)]
\item Given $G$-compressible clopen sets $Y$ and $Z$, the union $Y \cup Z$ is either the whole space or $G$-compressible.
\item Let $Y\subset X$ be a $G$-compressible clopen set, $\mc{C}=\{\rist_G(gY)\mid g\in G\}$ and $H=\grp{\mc{C}}$.
  Then $H \unlhd G$ and every $G$-translate of $Y$ is $H$-compressible.
\item \label{item:fullycomp_implies_generatedbyrists}If $X$ is compact and the action of $G$ is minimal, then the action is fully compressible and $G =\grp{\rist_G(Y)\mid Y \text{ is } G\text{-compressible}}$.
\end{enumerate}
\end{lem}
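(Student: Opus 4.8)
The plan is to establish (i), (ii), (iii) in that order, with (iii) invoking (i) and Lemma~\ref{lem:pw_full:split}. I assume throughout that $X$ is perfect — this is the case of interest, and is automatic under the hypotheses of (iii) since an infinite space carrying a minimal action has no isolated point — so that every non-empty clopen set is infinite and splits into two non-empty clopen pieces; I also use freely that a clopen subset of, or a $G$-translate of, a $G$-compressible set is again $G$-compressible. For (i): first $Y,Z\neq X$ (the whole space is never $G$-compressible), and after replacing $Z$ by $Z\setminus Y$ (there is nothing to prove if this is empty) we may assume $Y\cap Z=\emptyset$; put $W=X\setminus(Y\cup Z)$, so $W\neq\emptyset$. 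It is enough to compress $Y\cup Z$ into an arbitrary non-empty clopen $O\subsetneq X$. Split $O=O_1\sqcup O_2$ into non-empty clopen sets and pick $a\in G$ with $aY\subseteq O_1$. If $O_2\cap aW=\emptyset$, I repair $a$: choose a $G$-translate $gY$ of $Y$ with $gY\subseteq aW$ and then $f\in G$ with $f(gY)\subseteq O_2$; since $f(gY)$ is disjoint from $gY$ (it lies in $O_2$, which misses $aW\supseteq gY$), $\sigma(f,gY)\in G$ (using $G=\Full(G)$), and its support $gY\cup f(gY)$ is disjoint from $aY$, so replacing $a$ by $\sigma(f,gY)\,a$ keeps $aY\subseteq O_1$ while now $aW\cap O_2\neq\emptyset$. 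Once $O_2\cap aW\neq\emptyset$ we have $O_2\not\subseteq a(Y\cup Z)$, so there is a non-empty clopen $O_2'\subseteq O_2\setminus a(Y\cup Z)$; pick $f'\in G$ with $f'(aZ)\subseteq O_2'$. As $O_2'$ is disjoint from $aZ$, $\sigma(f',aZ)\in G$, and its support is disjoint from $aY$, so $g:=\sigma(f',aZ)\,a$ satisfies $g(Y\cup Z)=aY\cup f'(aZ)\subseteq O_1\cup O_2=O$, as needed.

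For (ii), $\mc C$ is conjugation-invariant because $g\,\rist_G(Z)\,g^{-1}=\rist_G(gZ)$ and the set of $G$-translates of $Y$ is $G$-invariant, so $H=\grp{\mc C}\unlhd G$; since $H$ is normal it suffices to prove $Y$ itself is $H$-compressible (a compression of $Y$ into $g_0^{-1}O$ then conjugates to a compression of $g_0Y$ into $O$). Given a non-empty open $O$, shrink it so that $Y\cup O\neq X$, and choose a $G$-translate $g_0Y\subseteq O$. By part (i), $Y\cup g_0Y$ is $G$-compressible, so there is $c\in G$ with $c(Y\cup g_0Y)\subseteq Y$; hence $W:=c^{-1}Y$ is a $G$-translate of $Y$ containing both $Y$ and $g_0Y$, and $\rist_G(W)\leq H$. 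It remains to move $Y$ into $g_0Y$ inside $W$: since $Y$ and $g_0Y$ are $G$-translates of $Y$ contained in $W$, one can (passing, if necessary, through a $G$-translate of $Y$ placed in $W\setminus Y$ so as to reach a disjoint configuration) write down one or two $\sigma$-maps whose supports stay inside $W$, hence lie in $\rist_G(W)\leq H$, whose product $h$ satisfies $hY\subseteq g_0Y\subseteq O$.

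For (iii), minimality and infiniteness force $X$ perfect. I first show every proper clopen $V\subsetneq X$ is $G$-compressible; since every non-dense open set is contained in a proper clopen set, this yields full compressibility. Fix a $G$-compressible clopen $Y_0\subsetneq X$ (shrink a $G$-compressible open set) and a point $p\in X\setminus V$. For each $v\in V$, minimality gives a $G$-translate $g_vY_0\ni v$, and a clopen set separating $v$ from $p$ inside $g_vY_0$ gives a $G$-compressible clopen $A_v$ with $v\in A_v$ and $p\notin A_v$. Compactness of $V$ provides a finite subcover $A_{v_1},\dots,A_{v_m}$ of $V$; their union $U$ is clopen, contains $V$, and omits $p$, so $U\neq X$, and iterating part (i) shows $U$ — hence its clopen subset $V$ — is $G$-compressible. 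Now that every proper clopen set is $G$-compressible, Lemma~\ref{lem:pw_full:split} gives $G=SS$ with $S$ the union of the rigid stabilisers of proper clopen sets, each of which lies in $\grp{\rist_G(Y)\mid Y\text{ is }G\text{-compressible}}$; therefore $G=\grp{\rist_G(Y)\mid Y\text{ is }G\text{-compressible}}$, and the assertions about $\Der(G)$ in Corollary~\ref{cor:Matui} follow by combining this with Theorem~\ref{thm:compressible_splittable_union} via Corollary~\ref{cor:micro-supported_compression}.

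I expect the repair step in (i) — arranging $O_2\cap aW\neq\emptyset$ — to be the crux: a naive ``compress $Y$ into $O_1$, then correct the position of $aZ$'' fails when $aW$ happens to avoid $O_2$, because the $\sigma$-maps at one's disposal preserve the ``size'' of a clopen set and cannot by themselves shrink $Y\cup Z$; the resolution is that $G=\Full(G)$ lets one drop a free $G$-translate of $Y$ into the otherwise uncontrolled set $W$ and slide it into $O_2$ to make room, and the same device underlies the sliding argument in (ii). Finally, the assumption that $X$ is perfect is genuinely needed for (i)--(ii): on a non-perfect space such as $\mathbb{N}\cup\{\infty\}$ with $G=\Homeo(X)$ the $G$-compressible clopen sets are all singletons and both statements fail.
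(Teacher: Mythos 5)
Your (i) and (iii) are sound (modulo the perfectness caveat below), but the proof of (ii) has a genuine gap at the only non-routine step. You take an arbitrary $c$ with $c(Y\cup g_0Y)\subseteq Y$, set $W=c^{-1}Y$, and then claim that one or two $\sigma$-maps supported in $W$ move $Y$ into $g_0Y$. Nothing in this construction prevents $W=Y$: if the shrunken $O$ lies inside $Y$, then $g_0Y\subseteq Y$, so $Y\cup g_0Y=Y$ and $c=1$ is a legitimate choice, giving $W=Y$. In that configuration no element of $\rist_G(W)=\rist_G(Y)$ can help at all, since such an element fixes $Y^c$ pointwise and therefore maps $Y$ onto $Y$, never into the proper subset $g_0Y$; in particular $W\setminus Y=\emptyset$, so there is no room to place the intermediate translate your parenthetical relies on. The repair is small but necessary: compress $Y\cup g_0Y$ into a \emph{proper} clopen piece $Y'\subsetneq Y$ (possible since $Y$ splits), so that $W\setminus(Y\cup g_0Y)\supseteq c^{-1}(Y\setminus Y')\neq\emptyset$; then place a translate $Y_1=f_1Y$ of $Y$ in this set and use $h=\sigma(f_2,Y_1)\sigma(f_1,Y)$ with $f_2Y_1\subseteq g_0Y$, which lies in $\rist_G(W)\le H$ and satisfies $hY\subseteq g_0Y$. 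The paper sidesteps the issue differently: it routes a translate $Z$ of $Y$ through a translate $fZ$ chosen disjoint from $Z\cup gZ$ with $Z\cup gZ\cup fZ\neq X$, and instead of one ambient $W$ it notes that each $\sigma$-map's support is a union of two $G$-compressible clopen sets that is not all of $X$, hence $G$-compressible by (i), hence contained in some $G$-translate of $Y$ (compress it into $Y$), so each $\sigma$-map already lies in $H$.

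On the rest: your (i) is correct but takes a different route from the paper, which places disjoint translates of $Y$ and $Z$ inside a translate $g_1Y\subseteq W$ and thereby compresses $Y\cup Z$ into $Y$ itself, finishing by compressibility of $Y$; your direct compression into a split target $O_1\sqcup O_2$ with the ``repair'' step is longer but fine. Your (iii) is essentially the paper's argument (iterate (i), use compactness, conclude with Lemma~\ref{lem:pw_full:split}), organised via the point $p\notin V$ rather than a minimal finite cover. Finally, note that you prove a variant of the statement: you assume $X$ perfect throughout, which the lemma does not. Your $\Nb\cup\{\infty\}$ example does indicate that some such hypothesis is needed for (i)--(ii) as literally stated (the corresponding step in the paper's proof, fitting disjoint translates of $Y$ and $Z$ inside $g_1Y$, likewise requires it), and perfectness is automatic under the hypotheses of (iii); but your justification there should invoke compactness, since an infinite \emph{discrete} space does admit minimal actions with every point isolated.
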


\begin{proof}
For (i), let $Y$ and $Z$ be $G$-compressible clopen sets.  By replacing $Z$ with $Z \setminus Y$, we may assume $Y$ and $Z$ are disjoint.  Let $W = (Y \cup Z)^c$; we may assume $W$ is non-empty.  Since $Y$ and $Z$ are $G$-compressible there exist $g_1,g_2,g_3 \in G$ such that $g_1Y \subseteq W$ and such that $g_2Y$ and $g_3Z$ are disjoint subsets of $g_1Y$.  Now let $g = g\inv_1h$ where 
\[
h = \sigma(g_2,Y)\sigma(g_3,Z).
\]
By construction we see that $h \in G$, so $g \in G$.  We also see that $h(Y \cup Z) \subseteq g_1Y$, so $g(Y \cup Z) \subseteq Y$.  Since $Y$ is $G$-compressible, we deduce that $Y \cup Z$ is also $G$-compressible, proving (i).

For (ii), it is clear from the definition that $H$ is normal in $G$.
By part (i) we see that if $U$ and $V$ are $G$-compressible clopen sets and $U \cup V \neq X$, then $U \cup V$ is contained in a $G$-translate of $Y$ and hence $\rist(U \cup V) \le H$. 
Let $Z$ be a $G$-translate of $Y$ and $U$ a non-empty open set. 
Pick $g \in G$ such that $gZ \subseteq U$; by choosing a smaller $U$ if necessary we may ensure $Z \cup gZ \neq X$.  Then there is $f \in G$ such that $fZ$ is disjoint from $Z \cup gZ$ and $Z \cup gZ \cup fZ \neq X$.  
Taking  $h =\sigma(gf\inv,fZ)\sigma(f,Z) \in H$, we see that $gZ = hZ$, so $Z$ is $H$-compressible.

For (iii), let $Y$ be a $G$-compressible clopen set.
Since $X$ is compact and the action is minimal, there is a cover of $X$ by finitely many $G$-translates $Y,g_1Y,\dots,g_nY$ of $Y$; suppose $n$ is minimised.  Then $Z = \bigcup^n_{i=1}g_iY$ is not dense, so by part (i), $Z$ is $G$-compressible.
Now given non-empty, non-dense clopen subsets $W_1, W_2 \subseteq X$, we see that $W_2$ contains a $G$-translate of $W_1$ as follows:  
Since $Y$ is $G$-compressible there is $g \in G$ such that $gY \subseteq X\setminus W_1$; since $X = Y \sqcup Z$, we have $W_1 \subseteq gZ$; and since $Z$ is $G$-compressible there is $k \in G$ such that
\[
kg\inv W_1 \subseteq kZ \subseteq W_2,
\]
showing that the action is fully compressible. 
Lemma~\ref{lem:pw_full:split} now implies the last claim.
\end{proof}

We deduce the following from Lemma~\ref{lem:pw_full}, Corollary~\ref{cor:micro-supported_compression} and Theorem~\ref{thm:compressible_splittable_union}.

\begin{cor}
Let $X$ be an infinite zero-dimensional Hausdorff space, let $G$ be the piecewise full group of some group of homeomorphisms of $X$, and suppose some non-empty clopen $Y \subseteq X$ is $G$-compressible; let $\mc{C} := \{\rist_G(gY) \mid g \in G\}$.
  Then $\mc{C}$ is a joinable compression family for both $G$ and $H = \grp{\mc{C}}$;
   consequently, $G$ is almost simple with monolith $\Der(H)$.
\end{cor}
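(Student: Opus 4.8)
The plan is to check that the family $\mc{C}^* := \{gY \mid g \in G\}$ of subsets of $X$ satisfies the hypotheses of Lemma~\ref{lem:micro-supported_family}, with both $H$ and $G$ in the role of its auxiliary subgroup, and then to invoke Theorem~\ref{thm:compressible_splittable_union}. As noted at the start of this subsection we may assume $G = \Full(G)$, and since $Y$ is a non-empty clopen $G$-compressible set the action of $G$ on $X$ is compressible, so that Lemma~\ref{lem:pw_full} applies. I would begin by recording the easy observations that $\mc{C}^*$ is a $G$-invariant family of non-empty clopen subsets of $X$, that each of its members is $G$-compressible (a translate of a $G$-compressible set is $G$-compressible), and that the subgroups $\rist_G(gY)$, being $G$-conjugates of $\rist_G(Y)$, are all non-trivial.

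The two substantive ingredients are the ``geometric'' statements from Lemma~\ref{lem:pw_full}: (i) the union of two $G$-compressible clopen sets is either $X$ or again $G$-compressible; and (ii) $H \unlhd G$ and every member of $\mc{C}^*$ is in fact $H$-compressible. Fact (ii) is the decisive one: it is what lets the compression hypothesis of Lemma~\ref{lem:micro-supported_family} be met by an element of $H$, rather than merely by an element of $G$.

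Next I would verify hypotheses (i)--(v) of Lemma~\ref{lem:micro-supported_family} for the pair $(\mc{C}^*, H)$. Hypothesis~(i) is the non-triviality of the $\rist_G(gY)$ recorded above; hypothesis~(iii) is immediate from fact~(ii), since a clopen set is open; and hypothesis~(ii) follows by taking, for $g \neq 1$, a point $x$ with $gx \neq x$, using zero-dimensionality and the Hausdorff property to obtain a clopen $U \ni x$ with $U \cap gU = \emptyset$, and compressing some translate of $Y$ into $U$. Hypothesis~(iv) is obtained exactly as in the proof of Corollary~\ref{cor:micro-supported_compression}(i), by compressing a translate of $Y$ into a suitable small clopen set. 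For hypothesis~(v) I would argue as in the proof of Corollary~\ref{cor:micro-supported_compression}(ii): it is enough to know that whenever $gY \cup g'Y \neq X$ this union lies inside an element of $\mc{C}^*$, and that follows from fact~(i) (the union is then $G$-compressible) together with the remark that any $G$-compressible clopen set $W$ satisfies $W \subseteq g_0^{-1}Y$ for a suitable $g_0$ (apply compressibility of $W$ to the non-empty open set $Y$). Running the same verifications with $G$ in place of $H$, for which only the $G$-compressibility of translates is used, then shows that $\mc{C} = \{\rist_G(gY) \mid g \in G\}$ is a joinable compression family for both $G$ and $H = \grp{\mc{C}}$; in particular $H$ is semi-transitive on $\mc{C}$.

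Finally, Theorem~\ref{thm:compressible_splittable_union}(i)--(ii) yields that $\Der(H)$ is the monolith of $G$ and is simple, while Lemma~\ref{lem:double_commutator:subgroups:bis}(ii) says that this monolith is non-abelian; by the definition of almost simplicity this is exactly the assertion that $G$ is almost simple with monolith $\Der(H)$. If there is a ``hard part'', it is already behind us in Lemma~\ref{lem:pw_full}(i)--(ii); in the present deduction the only point needing care is the bookkeeping over which of $G$ and $H$ plays the auxiliary role in each of the five conditions of Lemma~\ref{lem:micro-supported_family}.
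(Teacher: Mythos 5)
Your overall route is exactly the paper's: feed Lemma~\ref{lem:pw_full}(i)--(ii) into Lemma~\ref{lem:micro-supported_family}/Corollary~\ref{cor:micro-supported_compression} to get a joinable compression family (for $H$ via the $H$-compressibility of translates, for $G$ via $G$-compressibility), then apply Theorem~\ref{thm:compressible_splittable_union}(i)--(ii) and Lemma~\ref{lem:double_commutator:subgroups:bis}(ii). The verifications of hypotheses (ii)--(v) as you sketch them are fine, and your observation that only hypothesis (iii) depends on which of $G$ or $H$ plays the auxiliary role is correct.

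There is, however, one genuine gap, right at what you call an ``easy observation'': you assert that the subgroups $\rist_G(gY)$ are non-trivial ``being $G$-conjugates of $\rist_G(Y)$''. That justification is circular: conjugacy only reduces the question to $\rist_G(Y)$ itself, and the corollary does \emph{not} assume $\rist_G(Y) \neq \triv$ --- only that $Y$ is a non-empty clopen $G$-compressible set. This non-triviality is precisely hypothesis (i) of Lemma~\ref{lem:micro-supported_family}; it is also what makes condition \ref{item:nonabelian} come out of that lemma, and your own verification of hypothesis (v) (following Corollary~\ref{cor:micro-supported_compression}(ii)) quietly uses it, so without it the whole verification collapses. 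The fact is true, but it is exactly the point where piecewise fullness must be invoked: since $X$ is infinite and Hausdorff, pick pairwise disjoint non-empty open sets $V_1,V_2,V_3$ and, by compressibility of $Y$, elements $g_1,g_2 \in G$ with $g_iY \subseteq V_i$; then $h := \sigma(g_2g_1\inv, g_1Y)$ lies in $G = \Full(G)$, is non-trivial, and is supported in $g_1Y \cup g_2Y$, which misses $V_3$ and so is a proper union of two $G$-compressible clopen sets; by Lemma~\ref{lem:pw_full}(i) this union is $G$-compressible, hence contained in some translate $kY$, so $h \in \rist_G(kY) \neq \triv$, and only now does conjugacy give non-triviality of every member of $\mc{C}$. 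With this short argument supplied (it is the same use of the $\sigma(g,Y)$ construction as in Lemma~\ref{lem:pw_full:split}), the rest of your deduction is correct and agrees with the paper's intended proof.
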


Corollary~\ref{cor:Matui} follows as a special case and essentially recovers \cite[Theorem~4.16]{MatuiSimple}, as follows.  The hypothesis of \cite[Theorem~4.16]{MatuiSimple} is that $G$ is an essentially principal \'{e}tale groupoid whose unit space is the Cantor space, such that $G$ is purely infinite and minimal.  However, Matui shows (\cite[Proposition~4.11]{MatuiSimple}) that for a minimal essentially principal \'{e}tale groupoid $G$ whose unit space is the Cantor space, $G$ is purely infinite if and only if $\Full(G)$ is fully compressible.  Thus the hypothesis of \cite[Theorem~4.16]{MatuiSimple} implies that $\Full(G)$ is fully compressible, and then Corollary~\ref{cor:Matui} implies that $\Der(\Full(G))$ is simple and the monolith of $\Full(G)$.

\subsection{Compressible actions coming from automorphisms of trees}\label{sec:tree}

For another source of examples for Theorem~\ref{thm:compressible_splittable_union}, we consider actions on trees.

\begin{defn}
We say a tree $T$ is of \defbold{general type} if some vertex of $T$ has at least three neighbours; in other words, we exclude the case that $T$ embeds isometrically in $\Rb$.  A group $G$ acting by automorphisms on a tree $T$ is \defbold{geometrically dense} if $G$ does not fix any end or preserve any proper subtree of $T$.
\end{defn}

\begin{lem}\label{lem:tree:geom_dense}
Let $T$ be a tree of general type and let $G \le \Aut(T)$.
\begin{enumerate}[(i)]
\item {\cite[Lemma~2.3]{ReidSmith}} $G$ is geometrically dense if and only if every half-tree of $T$ contains a $G$-translate of every other half-tree of $T$.
\item {\cite[Lemme~4.4]{Tits70}} If $G$ is geometrically dense, then so is every non-trivial normal subgroup of $G$.
\item {\cite[Lemma~3(iii)]{MollerVonk}} If $G$ does not preserve any proper subtree of $T$ then for every edge of $T$ there is a translation in $G$ whose axis contains that edge.
\end{enumerate}
\end{lem}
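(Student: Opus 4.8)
These three statements are standard facts about geometrically dense actions on trees, and in a final write-up I would invoke the cited references; but I sketch the arguments here, both for completeness and because it is the forward implication of~(i) that supplies criterion~(c) of Definition~\ref{def:msc:subgroups} in the tree setting (via Corollary~\ref{cor:tree}). For the backward implication of~(i) I would argue by contraposition: if $G$ fixes an end $\xi$, choose an edge $e$ on a ray towards $\xi$ and label its half-trees $H^+, H^-$ so that $\xi \in \overline{H^+}$ and $\xi \notin \overline{H^-}$; then $\overline{gH^+} \ni g\xi = \xi$ for every $g \in G$, so no $G$-translate of $H^+$ lies in $H^-$. If $G$ preserves a proper subtree $T'$, a nearest-point argument produces a half-tree $H$ disjoint from $T'$, while every $G$-translate of a half-tree meeting $T'$ again meets $gT' = T'$ and so cannot lie in $H$. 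I would also note, for use below, that geometric density forbids leaves --- the $1$-core obtained by deleting all leaves is a $G$-invariant subtree, proper as soon as a leaf exists --- so in what follows $T$ has no leaves and a half-tree is determined by its set of boundary ends.

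The forward implication of~(i) is the substantive case, and the plan runs as follows. First, $G$ contains a hyperbolic element: a group all of whose elements are elliptic fixes a vertex or an end, and geometric density excludes both (a fixed vertex, or an inverted edge, yields an invariant proper subtree). Fix a hyperbolic $t$ with attracting and repelling ends $\xi^+, \xi^-$. The key input from geometric density is an orbit-transversality statement: since $G$ fixes no end, $|G\eta| \ge 2$ for $\eta \in \{\xi^+, \xi^-\}$, so $\mathrm{conv}(G\eta)$ is a non-empty $G$-invariant subtree and hence equals $T$; therefore $G\eta$ meets $\overline D$ for \emph{every} half-tree $D$, since the defining edge of $D$ lies on a geodesic between two points of $G\eta$. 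Next I would use the standard ping-pong estimate for a hyperbolic $h$ with attracting end $\zeta^+$ and repelling end $\zeta^-$: for any half-trees $D, B$ with $\zeta^- \notin \overline D$ and $\zeta^+ \in \overline B$ one has $h^n D \subseteq B$ for all large $n$, as $h^n$ drives every end other than $\zeta^-$ towards $\zeta^+$. Given half-trees $B, B'$, I would then pick $g_1 \in G$ with $g_1^{-1}\xi^+ \in \overline B$ and set $h := g_1^{-1} t g_1$, a hyperbolic element with attracting end $\zeta^+ := g_1^{-1}\xi^+ \in \overline B$ and repelling end $\zeta^- := g_1^{-1}\xi^-$; applying orbit-transversality to the orbit $G\zeta^- = G\xi^-$ and to the half-tree complementary to $B'$, pick $g_0 \in G$ with $\zeta^- \notin \overline{g_0 B'}$; then ping-pong applied to $h$, $D := g_0 B'$ and $B$ gives $h^n g_0 B' \subseteq B$ for $n$ large, so $g := h^n g_0$ maps $B'$ inside $B$. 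I expect the main obstacle to lie here --- one has to extract the whole ping-pong set-up purely from ``no invariant end, no invariant proper subtree'', and besides the orbit-transversality observation this needs the routine but slightly fiddly fact that for half-trees in a leafless tree, containment of boundary subsets in the end space implies containment of the half-trees.

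For~(ii), I would show that each way a non-trivial $N \trianglelefteq G$ could fail to be geometrically dense has a canonical, hence $G$-invariant, witness that pushes the failure up to $G$. If $N$ fixes an end, the set $E$ of $N$-fixed ends is $G$-invariant and non-empty, and $|E| = 1$, $|E| = 2$, $|E| \ge 3$ respectively force a $G$-fixed end, a $G$-invariant line, and (via the vertex median of a tripod of such ends) a vertex fixed by $N$. If $N$ preserves a proper subtree and is elliptic-only, it fixes a vertex --- so $\mathrm{Fix}(N)$ is a proper $G$-invariant subtree --- or an end, already handled; and if $N$ has a hyperbolic element, the union of the axes of the hyperbolic elements of $N$ is a non-empty $G$-invariant subtree, hence all of $T$, while any $N$-invariant subtree must contain all those axes and so also equal $T$. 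Every case contradicts geometric density of $G$, so $N$ is geometrically dense. For~(iii), if $G$ preserves no proper subtree then $G$ has a hyperbolic element (otherwise $G$ is elliptic-only and so fixes a vertex, giving an invariant singleton, or an end, and then, translating trivially along it, preserves a horoball); the union $Y$ of the axes of the hyperbolic elements of $G$ is then a non-empty $G$-invariant subtree, hence $Y = T$, i.e. every edge lies on the axis of some hyperbolic element of $G$.
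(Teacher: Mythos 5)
Your sketches are correct, but note that the paper itself offers no proof of this lemma: all three parts are imported verbatim from the cited sources (\cite{ReidSmith} for (i), \cite{Tits70} for (ii), \cite{MollerVonk} for (iii)), so there is no in-paper argument to compare against. What you write is essentially the standard proof found in those references: for the forward direction of (i), the existence of a hyperbolic element (elliptic-only groups fix a vertex, invert an edge, or fix an end, all excluded by geometric density), the observation that the convex hull of any end-orbit is a non-empty invariant subtree and hence all of $T$, so every half-tree's boundary meets the orbit, and then the ping-pong estimate $h^nD\subseteq B$ for a hyperbolic $h$ with $\xi_-(h)\notin\partial D$, $\xi_+(h)\in\partial B$; for (ii), the $N$-fixed-end set and $\mathrm{Fix}(N)$ are canonical hence $G$-invariant, and the union of axes of hyperbolic elements of $N$ is a $G$-invariant subtree contained in every $N$-invariant subtree; for (iii), the union-of-axes argument again. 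Two points you gloss are worth making explicit in a full write-up: edge inversions (pass to the barycentric subdivision, which changes neither the ends nor the existence of proper invariant subtrees) in parts (ii) and (iii), and in the $|E|\ge 3$ case of (ii) the contradiction should be routed, as in your elliptic-only case, through $\mathrm{Fix}(N)$ being a non-empty proper $G$-invariant subtree (proper because $G\le\Aut(T)$ acts faithfully and $N\neq\{1\}$). Neither is a genuine gap; in the final version it is reasonable to simply cite the references, as the paper does.
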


We now prove Corollary~\ref{cor:tree} using Theorem~\ref{thm:compressible_splittable_union} and Lemma~\ref{lem:tree:geom_dense}.

\begin{proof}[Proof of Corollary~\ref{cor:tree}]
By hypothesis $G$ is geometrically dense with $G^{++}$ non-trivial. 
This cannot occur unless $T$ is of general type, and in fact $T$ must be leafless and such that every ray of $T$ contains vertices with more than two neighbours. 
In particular, $\partial T$ is a Cantor space.
By construction, $G^{++}$ is normal in $G$ and by Lemma~\ref{lem:tree:geom_dense} (ii), the action of $G^{++}$ is geometrically dense.
Let $\mc{C}^*$ be the set of boundaries of half-trees of $T$.
Each half-tree $Y$ then has a rigid stabiliser $\rist_G(Y)$, which coincides with the rigid stabiliser of its boundary.
We can write $G^{++} = \grp{\mc{C}}$ where $\mc{C}$ is the set of rigid stabilisers of boundaries of half-trees; in particular, $\rist_G(Y)$ is non-trivial for some $Y \in \mc{C}^*$, and hence by Lemma~\ref{lem:tree:geom_dense}(i), in fact all elements of $\mc{C}$ are non-trivial.
Since $\mc{C}^*$ forms a base of clopen sets for the topology of $\partial T$, the hypotheses of Corollary \ref{cor:micro-supported_compression} hold and so $\mc{C}$ is a joinable compression family for $G^{++}$.

It follows from Theorem~\ref{thm:compressible_splittable_union}  that $G$ is almost simple with monolith $\Der(G^{++})$. 
The action of $\Der(G^{++})$ on $T$ is itself geometrically dense by Lemma~\ref{lem:tree:geom_dense}(ii).
\end{proof}

Corollary~\ref{cor:tree} immediately implies and strengthens \cite[Theorem~4.5]{LB-AP}.
It also recovers \cite[Theorem~6]{MollerVonk}, namely, the closure $\ol{G^{++}}$ of $G^{++}$ in the permutation topology is topologically simple.

The proof of the stronger Corollary \ref{cor:tree:simpleG++}  (that $G^{++}$ is perfect) is postponed to Section~\ref{sec:open_monolith}. 
This latter result recovers, in the case where the group is locally compact, Tits' simplicity theorem \cite{Tits70} and \cite[Theorem 7.3]{BanksElderWillis} where it is moreover assumed that the group of automorphisms of the tree has an \emph{independence property}, in essence, forcing \ref{item:translation} to hold for $\mc{C}$ the family of rigid stabilisers of half-trees.

Since the late 1990s, several generalisations of Tits' simplicity theorem have appeared in the literature to higher-dimensional spaces. 
They all follow Tits' blueprint and the bulk of the work in each is to show that the groups and chosen family of subgroups $\mc{C}$ satisfy conditions (A)--(E), so that an argument similar to the proof of Theorem \ref{thm:compressible_splittable_union} \ref{item:joinable_with_translation_implies_perfect} can be applied:

The first such examples appear in a paper by Haglund and Paulin \cite{HaglundPaulin}. 
It is shown that for an even polyhedral complex $P$ (a generalisation of cube complex) that is CAT(0) and Gromov-hyperbolic with at least 3 points on its boundary, if its group of automorphisms $G$ has the property that $G^{++}$ acts faithfully on $\partial P$, then $G^{++}$ is the simple monolith of $G$.
Here, $\mc{C}$ is the family of rigid stabilisers of half-spaces of $P$.

This is also the approach taken in \cite{Lazarovich}, where more general groups $G\leq \Aut(X)$, but where $X$ is a proper finite-dimensional irreducible (not a product of smaller ones) CAT(0) cube complex.
The same conclusion is arrived at, assuming that $G^{++}$ acts faithfully on $\partial X$, that there are at least 3 points on $\partial X$ that are fixed points of hyperbolic elements of $G$, that every half-space of $X$ contains $G$-orbit points arbitrarily far away from the defining hyperplane, and that $G$ has the analogue of Tits' property (P) that guarantees that \ref{item:translation} holds when taking $\mc{C}$ as the family of rigid stabilisers of half-spaces of $X$.

The results in \cite{Caprace-buildings} and \cite{deMedtsSilvaStruyve} are even stronger, for more restrictive families of groups. 
In these papers, right-angled buildings are considered; the group of all type-preserving automorphisms in the former, certain groups of type-preserving automorphisms prescribed by local actions on the panels of each type (mimicking the Burger--Mozes universal group construction for trees) in the latter. 
In both cases, if $G$ denotes the automorphism group in question, it is shown that $G^{++}$ is the simple monolith of $G$ by applying Theorem \ref{thm:compressible_splittable_union}\ref{item:joinable_with_translation_implies_perfect} where $\mc{C}$ consists of rigid stabilisers of \emph{wings} (analogues of half-trees, see \cite[Section 2.3]{deMedtsSilvaStruyve} for details). 
The even stronger fact that $G=G^{++} $ is shown in both cases, in the second paper, assuming that the local action groups are transitive and generated by point stabilisers. 
This stronger conclusion requires an ad-hoc argument that is beyond the scope of those presented here.

K.~Tent informs us that, using the techniques of \cite{Tent_simple}, it can be shown that the automorphism of a right-angled building where all panels have countably infinite thickness is boundedly simple (every element is a product of a bounded number of elements from a conjugacy class), but these techniques seem incompatible with the ones in the other papers.

\section{Hyperbolic actions and number of ends}

There is a notable difference in the large-scale geometry of the piecewise full groups appearing in Section~\ref{sec:pw_full}, as opposed to the groups acting on trees in Section~\ref{sec:tree}.  While the latter clearly act faithfully on a hyperbolic space (namely, the tree) with translations, we find that in the case of piecewise full groups, there are significant obstructions to having interesting actions on hyperbolic spaces.  In this section we adapt arguments from \cite{BFG}.

\begin{defn}\label{def:hyperbolic}
	A \defbold{hyperbolic action} of a (topological) group $G$ is a (continuous) action of $G$ by isometries on a hyperbolic space $X$.  Given such an action, then $G$ also acts on the Gromov boundary $\partial X$.  An element $g \in G$ is \defbold{loxodromic} on $X$ if $n \mapsto g^nx$ is a quasi-isometric embedding for some (hence any) $x \in X$.  A loxodromic element has exactly two fixed points $\xi_+(g),\xi_-(g)$ in $\partial X$, the \defbold{endpoints} of $g$.  We say the action of $G$ is of \defbold{general type} if there are loxodromic elements $g,h \in G$ such that $\{\xi_+(g),\xi_-(g)\}$ is disjoint from $\{\xi_+(h),\xi_-(h)\}$.
	
	A group $G$ has \defbold{property $(\mathrm{NGT})$} if there does not exist a hyperbolic action of $G$ of general type.
	
\end{defn}

In \cite{BFG}, the following theorem is proved.

\begin{thm}[{\cite[Theorem~5.1]{BFG}}]\label{thm:BFG}
Let $G$ be a discrete group acting on a compact Hausdorff space $X$.  Let $\mc{I}$ be a base of topology consisting of nondense and non-empty open subsets of $X$.  For $n \ge 1$, let $\mc{I}^{(n)}$ denote the set of $n$-tuples $(I_1,\dots,I_n)$ of elements of $\mc{I}$ such that $\bigcup^n_{i=1}I_i$ is not dense and $I_i$ and $I_j$ have disjoint closures for $i \neq j$.  Suppose that the following holds:
\begin{enumerate}[1.]
		\item $(\mathbf{C})$: For all $I \in \mc{I}$ there is $J \in \mc{I}$ such that $I^c \subseteq J$.
		\item $(\mathbf{2T})$: $G$ acts transitively on $\mc{I}^{(2)}$.
		\item $(\mathbf{3T})$: For all $g,h \in G$, there exists $(M,N,P) \in \mc{I}^{(3)}$ such that $(M,N,P)$ and $(gM,hM,P)$ are in the same $G$-orbit on $\mc{I}^{(3)}$.
		\item $(\mathbf{L})$: Let $(I,J,K) \in \mc{I}^{(3)}$ and let $g,h \in G$ such that $gI = I$ and $hJ = J$.  Then there exists $b \in G$ such that $b|_I = g|_I$, $b|_J = h|_J$ and $b|_K = \mathrm{id}|_K$.
\end{enumerate}
Then $G$ has property $(\mathrm{NGT})$.
\end{thm}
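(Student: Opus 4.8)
The plan is to argue by contradiction: suppose $G$ admits a hyperbolic action of general type on a space $Y$. By Definition~\ref{def:hyperbolic} this already supplies a loxodromic element $f\in G$, so it suffices to show that no element of $G$ can be loxodromic on $Y$. The argument splits into a geometric part, which isolates an obstruction to loxodromicity coming from commuting conjugates, and a combinatorial part, in which $(\mathbf{C})$, $(\mathbf{2T})$, $(\mathbf{3T})$ and $(\mathbf{L})$ are used to produce the relevant commuting conjugates for every element of $G$.

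For the geometric part I would establish the following: if a group $\Gamma$ acts by isometries on a hyperbolic space $Y$ and $g\in\Gamma$ admits $g_1,g_2\in\Gamma$, each $\Gamma$-conjugate to $g$, with $[g_1,g_2]=\triv$ and $g$ also $\Gamma$-conjugate to $g_1g_2$, then $g$ is not loxodromic. Indeed, if $g$ were loxodromic then so would be $g_1$, $g_2$ and $g_1g_2$, being conjugates of $g$; two commuting loxodromic isometries have the same pair of fixed points at infinity (each preserves the other's fixed-point pair, and passing to squares shows neither swaps it), so $g_1$ and $g_2$ fix each point of their common endpoint pair. On the subgroup of isometries fixing both points of such a pair there is a translation number $\tau$, a homogeneous quasimorphism with $|\tau(h)|=\ell_Y(h)$ for the stable translation length $\ell_Y$; being abelian, $\grp{g_1,g_2}$ lies in this subgroup and $\tau$ restricts to a genuine homomorphism on it. Writing $\ell=\ell_Y(g)>0$, conjugacy-invariance of $\ell_Y$ gives $\ell_Y(g_1)=\ell_Y(g_2)=\ell_Y(g_1g_2)=\ell$, so $\tau(g_1),\tau(g_2)\in\{\ell,-\ell\}$ and $\ell=|\tau(g_1g_2)|=|\tau(g_1)+\tau(g_2)|\in\{0,2\ell\}$, forcing $\ell=0$ -- a contradiction.

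For the combinatorial part I would prove that $G$ has a self-doubling property: for every $g\in G$ there are $g_1,g_2\in G$, each conjugate to $g$ in $G$, with $[g_1,g_2]=\triv$ and $g$ conjugate to $g_1g_2$ in $G$ (a short rewriting, conjugating $g_1$ back to $g$, then puts this in the exact shape used above). To construct $g_1,g_2$: choose $(I,J,K)\in\mc{I}^{(3)}$; using $(\mathbf{C})$ and $(\mathbf{2T})$ one realises $I$ and $J$ as clopen ``copies of the whole action'' and, using $(\mathbf{L})$ along the tripod $(I,J,K)$ (and $(\mathbf{C})$ to tidy up the behaviour on the complement of $I\cup J\cup K$), one obtains $g_1$ supported inside $I$ and acting there as a copy of $g$, and $g_2$ supported inside $J$ and acting there as a copy of $g$; these have disjoint supports, hence commute. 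Finally $(\mathbf{3T})$ -- which provides exactly the transitivity on configurations needed to match the dynamics of $g_1g_2$ with that of $g$, once the complement has been dealt with using $(\mathbf{C})$ -- exhibits $g_1g_2$ as a $G$-conjugate of $g$. This is a version, at the level of conjugacy classes, of the ``$G=SS$''-type splitting in Lemma~\ref{lem:pw_full:split}.

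Putting the two parts together: every $g\in G$ satisfies the hypothesis of the geometric lemma, so no element of $G$ is loxodromic on $Y$; in particular $f$ is not loxodromic, contradicting its choice. Hence $G$ admits no hyperbolic action of general type, i.e.\ $G$ has property $(\mathrm{NGT})$. I expect the main obstacle to be the combinatorial part: since $g$ need not have proper support, one cannot simply conjugate it ``into $I$'', and the delicate point is to use the transitivity in $(\mathbf{2T})$ and $(\mathbf{3T})$ to manufacture an element that simultaneously mimics $g$ on two disjoint clopen pieces and is then recognised -- again via that transitivity -- as conjugate to $g$. The geometric part, by contrast, is a routine computation with translation numbers once the commuting-conjugate configuration is in hand.
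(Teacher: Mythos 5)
The statement you are proving is quoted in the paper from \cite[Theorem~5.1]{BFG} and is not reproved there; the closest thing to ``the paper's proof'' is its generalisation, Theorem~\ref{thm:NGT}, whose argument never touches hyperbolic geometry directly. Instead it verifies the hypotheses of Genevois' criterion \cite[Theorem~1.1]{Genevois}: every element of $G_0$ is a product of three elements lying in a fixed finite family of rigid stabilisers (the analogue of Lemma~\ref{lem:pw_full:split}), conjugates of such \emph{small-support} elements admit commutation chains (Lemma~\ref{lem:leafless}), plus a technical third condition; the translation-length/quasimorphism work is entirely delegated to the cited criterion. Your plan is genuinely different: you want, for \emph{every} $g\in G$, two commuting $G$-conjugates $g_1,g_2$ of $g$ with $g_1g_2$ again conjugate to $g$, and then the (correct, standard) homogeneous-quasimorphism computation to kill all loxodromics.

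The gap is in that combinatorial ``self-doubling'' claim, and it is not a technicality you can route around with $(\mathbf{C})$, $(\mathbf{2T})$, $(\mathbf{3T})$, $(\mathbf{L})$. Condition $(\mathbf{L})$ only lets you glue restrictions of elements that already stabilise $I$ and $J$ setwise; the element $b$ it produces is a piecewise copy, not a conjugate of $g$. Conjugation is implemented by a homeomorphism of $X$: $\supp(hgh\inv)=h(\supp(g))$ and $\mathrm{Fix}(hgh\inv)=h(\mathrm{Fix}(g))$. So if $g$ has dense support (e.g.\ a fixed-point-free element, which exists in the motivating examples such as Thompson-type piecewise full groups), then \emph{no} conjugate of $g$ can be ``supported inside $I$'' for a nondense $I\in\mc{I}$, and nothing in the hypotheses produces commuting conjugates of $g$ itself, let alone ones whose product lies back in the conjugacy class of $g$. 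This is exactly why the actual arguments (in \cite{BFG} and in Theorem~\ref{thm:NGT}) work at the level of small-support elements coming from rigid stabilisers, first writing an arbitrary $g$ as a bounded product of such pieces and only then exploiting commuting-conjugate configurations; your proposal inverts this order and the key step fails. (A smaller remark: ruling out all loxodromic elements is stronger than $(\mathrm{NGT})$ needs, which is harmless here, but it also means your combinatorial claim would have to be strong enough to prove property (NL), underscoring that it cannot come cheaply from the stated hypotheses.)
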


Now let $\mc{C} = \{\rist_G(I^c) \mid I \in \mc{I}\}$.  We make the following observations on the hypotheses of Theorem~\ref{thm:BFG}.
\begin{enumerate}[1.]
\item The condition $(\mathbf{2T})$ implies that $G$ acts transitively on $\mc{I}$, so $\mc{C}$ is a single conjugacy class of subgroups of $G$.  Condition $(\mathbf{L})$ ensures that some $A \in \mc{C}$, hence every $A \in \mc{C}$, is non-trivial.
\item Combining conditions $(\mathbf{C})$ and $(\mathbf{2T})$, it is easy to see that the action is fully compressible and every non-empty open set contains $I^c$ for some $I \in \mc{I}$.  We are thus in the situation of Corollary~\ref{cor:micro-supported_compression}, so $\mc{C}$ is a joinable compression family of subgroups of $G$.
\item For the proof of Theorem~\ref{thm:BFG}, it is sufficient to replace $\mc{I}^{(3)}$ with a set $\mc{I}_3$ of triples in $\mc{I}$ such that the analogues of $(\mathbf{3T})$ and $(\mathbf{L})$ hold, and for there to exist a $G$-orbit $\mc{I}_2$ on pairs in $\mc{I}$ with suitable properties to substitute for $(\mathbf{2T})$.  Since the argument is based on pointwise fixators, there is then no loss in rephrasing these properties in terms of the sets
\[
\mc{C}_n := \{(\rist_G(I^c_1),\dots, \rist_G(I^c_n)) \mid (I_1,\dots,I_n) \in \mc{I}_n\} \quad (n=2,3).
\]
The details will become clear in the statement and proof of the next theorem.
\item For the proof it is necessary to choose a certain subset $\mc{A}$ of $\mc{I}$ forming a cover of $X$, chosen so that for every $I,J \in \mc{A}$ there exists $K \in \mc{A}$ such that $(I,K),(J,K) \in \mc{I}^{(2)}$.  We will impose analogous conditions on $\mc{S} = \{\rist_G(I^c) \mid I \in \mc{A}\}$ and $\mc{C}_2$, noting that since $\mc{A}$ is a cover, the set $\mc{S}$ will have trivial intersection.
\end{enumerate}

We are now ready to state a version of \cite[Theorem~5.1]{BFG} for joinable compression families.

\begin{thm}\label{thm:NGT}
	Let $G$ be a group and let $\mc{C}$ be a conjugacy class of subgroups of $G$; fix a $G$-orbit $\mc{C}_2$ on pairs in $\mc{C}$, a finite subset $\mc{S}$ of $\mc{C}$, and a subgroup $\Der(G) \le G_0 \le G$.  Write $\mc{C}_3$ for the set of triples $(A_1,A_2,A_3)$ of elements of $\mc{C}$ with the following property:
	\begin{enumerate}
	\item[$(\mathbf{L})$] The intersection $\bigcap^3_{i=1} A_i$ contains an element of $\mc{C}$, and given $g_i \in \N_G(A_i)$ such that $g_j \in A_j$ for at least one $j \in \{1,2,3\}$, then $\bigcap^3_{i=1}g_iA_i \neq \emptyset$.
	\end{enumerate}
	We make the following assumptions:
	\begin{enumerate}
		\item[$(\mathbf{J})$] The set $\mc{C}$ is a joinable compression family for $G$.
		\item[$(\mathbf{2T})$] Given $(A,B) \in \mc{C}_2$, there exists $(A,B,C) \in \mc{C}_3$; and given $(A,B) \in \mc{C}_2$ and $A' \in \mc{C}$ such that $A' \ge A$, then $(A',B) \in \mc{C}_2$.
		\item[$(\mathbf{3T})$] For all $g,h \in G$, there exists $(A,B,C) \in \mc{C}_3$ such that $(A,B,C)$ and $(gAg\inv,hBh\inv,C)$ are in the same $G$-orbit on $\mc{C}_3$.
		\item[$(\mathbf{S})$] The set $\mc{S}$ has trivial intersection, and for every $A,B \in \mc{S}$ there is some $C \in \mc{S}$ such that $(A,C),(B,C) \in \mc{C}_2$.
	\end{enumerate}
	Then $G_0$ has property $(\mathrm{NGT})$.
	
\end{thm}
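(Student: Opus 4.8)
The plan is to transcribe the proof of \cite[Theorem~5.1]{BFG}, that is of Theorem~\ref{thm:BFG} above, into the language of compression families, using the dictionary set out in the four remarks preceding the statement. Under that dictionary a member $I$ of a base $\mc{I}$ of nondense opens corresponds to the rigid stabiliser $A = \rist_G(I^c) \in \mc{C}$; the stabiliser condition $gI = I$ becomes $g \in \N_G(A)$; the relation ``$b$ restricts to $g$ on $I$'' becomes $b \in gA$; the configuration spaces $\mc{I}^{(2)}$ and $\mc{I}^{(3)}$ are replaced by the orbit $\mc{C}_2$ and the set $\mc{C}_3$; and the finite open cover $\mc{A}$ of $X$ used in \cite{BFG} is replaced by $\mc{S}$, whose triviality of intersection encodes the assertion that $\mc{A}$ covers $X$. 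With this in hand, hypotheses $(\mathbf{J})$, $(\mathbf{2T})$, $(\mathbf{3T})$, $(\mathbf{S})$ and the defining property $(\mathbf{L})$ of $\mc{C}_3$ become exactly the translations of hypotheses $(\mathbf{C})$, $(\mathbf{2T})$, $(\mathbf{3T})$, the cover property and $(\mathbf{L})$ of Theorem~\ref{thm:BFG}; and joinability $(\mathbf{D})$ is what supplies, in the absence of an ambient space, the purely combinatorial facts that \cite{BFG} read off from unions of subsets (that a union of two ``small'' sets is either everything or again small, and that $\mc{S}$ contains pairwise compatible configurations). So the underlying combinatorial input is available and the deduction can be copied over.

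The argument will then run as in \cite{BFG}. First note that, since $\Der(G) \le G_0$, the subgroup $G_0$ is normal in $G$, so that $[g_0, g] \in G_0$ for all $g_0 \in G_0$ and $g \in G$. Suppose for a contradiction that $G_0$ admits a hyperbolic action of general type on a hyperbolic space $Y$, and fix loxodromic $f_1, f_2 \in G_0$ with disjoint endpoint pairs. Applying $(\mathbf{3T})$ to suitable elements built from $f_1$ and $f_2$ produces a triple in $\mc{C}_3$ together with an element $t \in G$ relating it to its image; feeding this into the gluing provided by property $(\mathbf{L})$ — using $(\mathbf{2T})$ to produce the third component of a triple, and $(\mathbf{S})$ together with joinability to fit the relevant subgroups into a common member of $\mc{C}$ — yields, as in \cite{BFG}, an element witnessing that $f_1$ and $f_2$ cannot have disjoint endpoint pairs after all. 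Concretely, one obtains an element $\gamma$ which, after correcting by a commutator of $f_i$ with an element of $G$ so as to lie in $G_0$ (using the observation above), is loxodromic on $Y$ and commutes with both $f_1$ and $f_2$; since two commuting loxodromic isometries of a hyperbolic space share the same pair of endpoints on $\partial Y$, this forces $\{\xi_+(f_1), \xi_-(f_1)\} = \{\xi_+(\gamma), \xi_-(\gamma)\} = \{\xi_+(f_2), \xi_-(f_2)\}$, a contradiction. Hence $G_0$ has property $(\mathrm{NGT})$.

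The real work, and the main obstacle, is the translation rather than the dynamics, which is immediate once the element $\gamma$ has been produced. One must check that each set-theoretic step of \cite[Theorem~5.1]{BFG} can be carried out at the level of the subgroups $\rist_G(I^c)$ alone: that property $(\mathbf{L})$, phrased as nonemptiness of an intersection of cosets $\bigcap_i g_i A_i$, really does the job of ``$b|_M = g_1|_M$, $b|_N = g_2|_N$, $b|_P = \mathrm{id}$'', with the clause ``$g_j \in A_j$ for at least one $j$'' playing the role of the piece on which $b$ is required to be the identity; that joinability $(\mathbf{D})$ together with $(\mathbf{S})$ genuinely recovers the cover-theoretic input; and that, since $\mc{C}_2$ and $\mc{C}_3$ are a single fixed orbit and a single fixed set rather than ``all admissible configurations'', the enlargement clause in $(\mathbf{2T})$ and the orbit clause in $(\mathbf{3T})$ are invoked precisely where \cite{BFG} use the full configuration spaces. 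Alongside this one keeps track that every auxiliary element constructed either already lies in $G_0$ or is replaced by a commutator $[g_0, g]$ with $g_0 \in G_0$ and $g \in G$, so that the concluding dynamical step makes sense inside the $G_0$-action on $Y$. With all of this in place, the proof is a line-by-line adaptation of the argument of \cite{BFG}.
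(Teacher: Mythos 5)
Your proposal defers essentially all of the content to ``a line-by-line adaptation of \cite{BFG}'', but in this abstract setting there is nothing to transcribe line by line: the hypotheses of the theorem make no reference to an underlying space, and the entire point is to replace the set-theoretic manipulations of \cite[Theorem~5.1]{BFG} by arguments carried out purely inside the family $\mc{C}$, while additionally keeping every auxiliary element inside the subgroup $G_0$. The paper does not redo the hyperbolic geometry at all (and your sketched endgame --- producing a loxodromic $\gamma$ commuting with both $f_1$ and $f_2$ and comparing endpoint pairs --- does not reflect how either \cite{BFG} or this paper concludes); instead it invokes the criterion of \cite[Theorem~1.1]{Genevois}, by which it suffices to verify three purely algebraic properties of $S \cap G_0$ and $T \cap G_0$ (with $S$ the union of $\mc{S}$ and $T$ the union of $\mc{C}$): every element of $G_0$ is a product of three elements of $S \cap G_0$; any $t,t' \in T \cap G_0$ are linked by a chain of commuting conjugates inside $T \cap G_0$; and a uniform property involving $r$ cyclic subgroups generated by elements of $T \cap G_0$. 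Your write-up contains no verification of any statement of this kind.

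These verifications are precisely where the new work lies, and they do not follow formally from your dictionary. For the product decomposition one needs, for instance: given $(A,C),(B,C) \in \mc{C}_2$, there exist $c \in C$ and $g \in \Der(G)$ with $[c,g] \in C$ and $[c,g]A[g,c] = B$; this uses $(\mathbf{2T})$ and $(\mathbf{L})$ to turn a conjugator $h$ with $hAh\inv = B$, $hCh\inv = C$ into a representative lying in $C$, and then Proposition~\ref{prop:compressible_splittable} (transitivity of $\Der(G)$ on $\mc{C}$) to replace elements by commutators --- the device that keeps everything inside $G_0$, which you gesture at (``correcting by a commutator'') but never carry out. Iterating this, together with Lemma~\ref{lem:leafless} and joinability, gives $g = s_1s_2s_3$ with $s_i \in S \cap G_0$. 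The commuting-conjugates property is not part of any BFG translation: it comes from Lemma~\ref{lem:leafless} combined with simplicity of $\Der(G)$ (Theorem~\ref{thm:compressible_splittable_union}) once the product decomposition is known. The third property requires constructing, for given $g,h \in G_0$, an element $c$ with $cg \in A$, $ch \in B$, $c \in C$ for some $(A,B,C) \in \mc{C}_3$ via $(\mathbf{3T})$ and $(\mathbf{L})$, again corrected by a commutator with some $k$ satisfying $kCk\inv \le A \cap B \cap C$, and then finiteness of $\mc{S}$ to obtain the uniform bound $r = 3|\mc{S}|$. None of these constructions appears in your proposal, so as it stands it is a plan for a proof rather than a proof, and its concluding dynamical step is in any case not the mechanism by which the result is obtained.
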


\begin{proof}
We follow the proof strategy of \cite[Theorem~5.1]{BFG}, with some extra work to account for subgroups containing $\Der(G)$. Write $S$ for the union of all elements of $\mc{S}$, and write $T$ for the union of all elements of $\mc{C}$.  

Let $G_0$ be a subgroup of $G$ such that $\Der(G) \le G_0 \le G$.  We will not need to consider isometric actions of $G_0$ directly, as we can appeal to theorems of Genevois.  By \cite[Theorem~1.1]{Genevois}, to prove that $G_0$ has $(\mathrm{NGT})$, it is enough to prove the following properties:
	\begin{enumerate}[(1)]
		\item Every element of $G_0$ can be written as the product of $3$ elements of $S \cap G_0$.
		\item Given $t,t' \in T \cap G_0$ there exist $g,h \in G_0$ such that
		\[
		[t,gtg\inv] = [gtg\inv,hth\inv] = [hth\inv,t']=1.
		\]
		\item There exists $r \ge 1$ such that for every $g,h \in G_0$ there exist $t,t_1,\dots,t_r \in T \cap G_0$ such that for every $s \in S \cap G_0$ there exists $f \in \grp{t_1} \dots \grp{t_r}$ such that each of $fsf\inv t,fsf\inv tg,fsf\inv th$ belongs to $T \cap G_0$.
	\end{enumerate}
	
By $(\mathbf{2T})$ and $(\mathbf{3T})$, we observe that for all $(A,B) \in \mc{C}_2$ the intersection $A \cap B$ contains an element of $\mc{C}$, hence a $G$-conjugate of any given element of $\mc{C}$.  Note also that $\mc{C}_3$ is stable under permuting the entries of triples.  

	We now proceed by a series of claims.

\emph{Claim 1: Given $(A,C),(B,C) \in \mc{C}_2$, there is $c \in C$ and $g \in \Der(G)$ such that $[c,g] \in C$ and $[c,g]A[g,c] = B$.}

Since $G$ is transitive on $\mc{C}_2$, there is $h \in G$ such that $hAh\inv = B$ and $hCh\inv = C$.  By $(\mathbf{2T})$ we can take $(A,C,D) \in \mc{C}_3$, and so by $(\mathbf{L})$ there is $k \in A \cap gC$.  Then $hk\inv A kh\inv = B$ and $c:=hk\inv \in C$.  We now take $g \in \Der(G)$ such that $gCg\inv \le A \cap C$, which exists by Proposition~\ref{prop:compressible_splittable}.  Then $[c,g] \in C$ and $[c,g]A[g,c] = B$.

\emph{Claim 2: Let $\Der(G) \le G_0 \le G$ and $g \in G_0$.  Then $g = s_1s_2s_3$ for some $s_1,s_2,s_3 \in S \cap G_0$.}
	
	Let $g \in G_0$ and $A \in \mc{S}$.  Then $gAg\inv \in \mc{C}$, and by Lemma~\ref{lem:leafless}(ii), some $A_2 \in \mc{C}$ commutes with $gAg\inv$.  Since $\mc{S}$ has trivial intersection, there is some $B \in \mc{S}$ such that $A_2 \nleq B$.  By joinability there is $A' \in \mc{C}$ such that $gA'g\inv \ge \grp{gAg\inv,B}$.
	
	Let $C \in \mc{S}$ such that $(A,C),(B,C) \in \mc{C}_2$, which exists by $(\mathbf{S})$.  Since $A' \ge A$, by $(\mathbf{2T})$ we have $(A',C) \in \mc{C}_2$; thus by Claim 1 there exists $c_1 \in C \cap \Der(G)$ such that $c_1Bc\inv_1 = A'$.
	
	We now have $gc_1Bc\inv_1 g\inv = gA'g\inv$.  By Claim 1 there exists $c_2 \in C \cap \Der(G)$ such that $c_2(gA'g\inv)c\inv_2 = B$.  Now we have $(c_2gc_1)B(c_2gc_1)\inv = B$.  By $(\mathbf{2T})$ we can extend $(B,C)$ to some $(B,C,D) \in \mc{C}_3$, and then by $(\mathbf{L})$, we obtain $c_3 \in (c_2gc_1)B \cap C$.  Take $k \in G$ such that $kCk\inv \le B \cap C$ and let $c_4 = [c_3,k]$.  Then $c\inv_4 c_2 \in C \cap G_0$; writing $s_2 = c\inv_4c_2gc_1$, we also have $s_2 \in B$ (since $c_4 \in (c_2gc_1)B$) and $s_2 \in G_0$ (since $s_2$ is expressed as a product of elements of $G_0$).  We now set
	\[
	s_1 = c\inv_2 c_4, \; s_2 = c\inv_4c_2gc_1, \; s_3 = c_1\inv,
	\]
	and we see that $G$ is expressed as a product of $3$ elements of $S \cap G_0$.
\

By Claim 2, we see that property (1) is satisfied.  By Theorem~\ref{thm:compressible_splittable_union} we then immediately see that $\Der(G)$ is simple, and by Proposition~\ref{prop:compressible_splittable} the action of $\Der(G)$ on $\mc{C}$ is transitive.  In particular, we deduce property (2) from Lemma~\ref{lem:leafless}(iv).  It now remains to check property (3).
	
	\emph{Claim 3: Property (3) is satisfied for $r = 3|\mc{S}|$.}
	
	Fix $g,h \in G_0$.  By $(\mathbf{3T})$ there exist $(A,B,C) \in \mc{C}_3$ and $b_0 \in \N_G(C)$ such that $b_0g \in \N_G(A)$ and $b_0h \in \N_G(B)$.  Now we use $(\mathbf{L})$ to obtain
	\[
	c_0 \in b_0gA \cap B \cap b_0C \text{ and } c_1 \in A \cap b_0hB \cap C.
	\]
	Setting $c_2 := c\inv_1 c\inv_0 b_0$, it follows that
	\[
	c_2g \in A, \; c_2h \in B, \; c_2 \in C.
	\]
	We then choose $k \in G$ such that $kCk\inv \le A \cap B \cap C$, and set $c = [k,c\inv_2]$.  Then
	\[
	cg \in A \cap G_0, \; ch \in B \cap G_0, \; c \in C \cap G_0.
	\]
	
	Given $s \in S \cap G_0$, take $D \in \mc{S}$ such that $s \in D$.  Since $G_0$ acts transitively on $\mc{C}$ there is $f = f^{(D)} \in G_0$, depending only on $D$, such that $fDf\inv \le A \cap B \cap C$.  Then
	\[
	fsf\inv cg \in A \cap G_0, \; fsf\inv ch \in B \cap G_0, \; fsf\inv c \in C \cap G_0;
	\]
	in particular, we have $fsf\inv cg, fsf\inv ch, fsf\inv c \in T \cap G_0$.
	
	By Claim 1, there exist $s^{(D)}_1, s^{(D)}_2, s^{(D)}_3 \in S \cap G_0$ such that $f^{(D)} = s^{(D)}_1s^{(D)}_2s^{(D)}_3$; in particular, $s^{(D)}_i \in T \cap G_0$ for all $D \in \mc{S}$ and $i \in \{1,2,3\}$.  Put some order on $\mc{S}$ and let $R = \prod_{D \in \mc{S}}\prod^3_{i=1}\grp{s^{(D)}_i}$.  We see that for all $s \in S \cap G_0$ we can find $f \in R$ satisfying property (3), and hence $r = 3|\mc{S}|$ suffices for property (3), for all $g,h \in G_0$.
	
	\
	
	Now that all the properties have been shown, the fact that $G_0$ has $(\mathrm{NGT})$ follows from \cite[Theorem~1.1]{Genevois}.
\end{proof}
	
Under the hypotheses of Theorem~\ref{thm:NGT}, the analogue of Corollary~\ref{cor:alternatable_NGT:ends} is now straightforward.

\begin{cor}
Let $G$ and $G_0$ be as in Theorem~\ref{thm:NGT}, such that $G_0$ is equipped with a locally compact group topology.  Suppose $G_0$ is compactly generated with no non-trivial continuous homomorphism to $\Rb$.  Then $G_0$ has at most one end.
\end{cor}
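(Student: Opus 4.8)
The plan is to argue by contradiction: assume $G_0$ has more than one end, so $e(G_0) \in \{2,\infty\}$, and derive a contradiction from the three standing hypotheses on $G_0$ — property $(\mathrm{NGT})$, compact generation of the locally compact group $G_0$, and the absence of a non-trivial continuous homomorphism $G_0 \to \Rb$ — together with the structural information already extracted in the proof of Theorem~\ref{thm:NGT}. Recall from that proof (Claim~2 applied with $G_0 = G$) that $G = \langle \mc{C} \rangle$, so $\Der(G)$ is the monolith of $G$ and is simple by Theorem~\ref{thm:compressible_splittable_union}; it is moreover non-abelian since the members of $\mc{C}$ are, and $G_0/\Der(G) \le G/\Der(G)$ is abelian. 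Writing $N := \Der(G)$, we thus have a non-abelian simple normal subgroup $N$ of $G_0$ with $G_0/N$ abelian; in particular $N$ is not virtually cyclic.

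The one external input is the Stallings–Abels structure theorem for ends of compactly generated locally compact groups: since $e(G_0) \ge 2$, the group $G_0$ splits non-trivially over a compact open subgroup, and hence admits a continuous, cocompact, minimal action by automorphisms on a simplicial tree $T$ with compact open edge stabilisers and no global fixed point; moreover a non-trivial such splitting always supplies a hyperbolic (loxodromic) element. I would then invoke the standard trichotomy for minimal tree actions possessing a hyperbolic element — lineal, focal, or of general type — and rule out each case. In the \emph{lineal} case $G_0$ preserves a bi-infinite geodesic $L \subseteq T$, and the kernel of the induced action on $L$ is closed and contained in an edge stabiliser, hence compact; so $G_0$ maps onto a cocompact subgroup of $\mathrm{Isom}(\Rb)$, that is onto $\Zb$ or $D_\infty$, with compact kernel $K$. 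Then $N \cap K$ is normal in the simple group $N$, so equals $1$ or $N$; if $N \cap K = N$ then $N \le K$, so $G_0/K$ is a quotient of the abelian group $G_0/N$ and must be $\Zb$, and composing $G_0 \to \Zb \hookrightarrow \Rb$ contradicts the hypothesis on homomorphisms to $\Rb$; if $N \cap K = 1$ then $N$ embeds into $\Zb$ or $D_\infty$, contradicting that $N$ is non-abelian and simple. In the \emph{focal} case $G_0$ fixes a unique end $\xi$ of $T$; the associated Busemann homomorphism $\beta : G_0 \to \Zb$ (integer-valued because $T$ is simplicial) is continuous and non-trivial, since the hyperbolic element shifts the horospheres about $\xi$, and composing with $\Zb \hookrightarrow \Rb$ again contradicts the hypothesis.

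This leaves the case where the action is of general type. Then $T$, being a tree, is a hyperbolic space, and $G_0$ acts on it continuously by isometries in a manner that is of general type in the sense of Definition~\ref{def:hyperbolic}; this directly contradicts property $(\mathrm{NGT})$. As all three cases are impossible, $e(G_0) \le 1$, as required.

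The step I expect to need the most care is the invocation of the structure theorem and of the lineal/focal/general-type trichotomy in the locally compact (as opposed to discrete) setting: one must cite precisely that the tree action can be taken continuous, cocompact and minimal with compact open edge stabilisers, that a non-trivial splitting over a compact subgroup always yields a hyperbolic element (ruling out the horocyclic subcase), and that the kernel of the action on the invariant line in the lineal case is compact. Each of these is classical, but the argument should point to the exact reference used elsewhere in the paper for these facts.
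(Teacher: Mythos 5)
Your overall strategy is close in spirit to the paper's: both proofs rule out general-type tree actions via property $(\mathrm{NGT})$ and derive a contradiction by extracting a non-trivial continuous homomorphism $G_0 \to \Zb \hookrightarrow \Rb$ from the remaining (focal/lineal) possibilities, and your use of simplicity of $\Der(G)$ to kill the compact-kernel cases is sound. However, there is a genuine gap at the very first structural step. The theorem you invoke --- ``a compactly generated locally compact group with more than one end splits non-trivially over a compact \emph{open} subgroup'' --- is false for general locally compact groups: $\Rb$ (or $\mathrm{Isom}(\Rb)$, or $K \times \Rb$ with $K$ compact) is compactly generated with two ends but has no compact open subgroups at all, so no such splitting or simplicial tree with compact open edge stabilisers exists. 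Abels' Stallings-type theorem is a statement about \emph{totally disconnected} locally compact groups, and nothing in your argument establishes that $G_0$ is totally disconnected (its locally compact topology is just assumed, and a priori $G_0$ could have a non-trivial connected component, or have exactly two ends with quotient of $\Rb$-type modulo a compact normal subgroup). You flag this step as needing care, but as written the case $e(G_0)=2$ with non-discrete quotient, and more generally the non-\tdlc case, is simply not covered.

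This reduction is exactly what the paper's first paragraph supplies and what your proposal is missing: since $G_0/\ol{\Der(G)}$ is abelian with no non-trivial continuous homomorphism to $\Rb$, it has at most one end, so $\ol{\Der(G)}$ is non-compact; combined with simplicity of $\Der(G)$ and triviality of its centraliser this shows $G_0$ has no non-trivial compact (or abelian) normal subgroup, and then Houghton's theorems on ends of locally compact groups force $G_0$ to have \emph{infinitely many} ends and to be totally disconnected. Only after that reduction does the tree machinery apply; the paper then cites a result of Caprace--Cornulier--Monod--Tessera type (a compactly generated \tdlc group with infinitely many ends and no general-type action on a tree admits a focal action on a locally finite tree, hence a continuous quotient $\Zb$), whereas you re-derive the same dichotomy by hand from the Bass--Serre tree of an Abels splitting and the lineal/focal/general-type trichotomy. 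That hand-made route is perfectly viable in the \tdlc setting (and your continuity observations, via open vertex stabilisers, are correct), so if you insert the reduction to the totally disconnected, infinitely-ended case --- or alternatively treat the two-ended case by the structure theorem for two-ended locally compact groups (compact normal subgroup with quotient $\Zb$, $D_\infty$, $\Rb$ or $\mathrm{Isom}(\Rb)$) --- your argument goes through.
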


\begin{proof}
Suppose $G_0$ has more than one end.  Then $G_0/\ol{\Der(G)}$ is an abelian locally compact group that has no non-trivial continuous homomorphism to $\Rb$, from which we see that $G_0/\ol{\Der(G)}$ has at most one end.  Thus $\ol{\Der(G)}$ is not compact, and hence $G_0$ has no non-trivial compact normal subgroup; we note also that $G_0$  has no non-trivial abelian normal subgroup.  By standard results on number of ends, for instance by \cite[Theorems~3.7, 4.1, 4.2]{Houghton}, we see that $G_0$ has infinitely many ends and is totally disconnected.

By Theorem~\ref{thm:NGT}, $G_0$ has no general type actions on trees.  Since $G_0$ is also totally disconnected with infinitely many ends, it follows from \cite[Proposition~3.6]{CMR} that $G_0$ has a focal action on a locally finite tree; in particular, $G_0$ admits $\Zb$ as a continuous quotient.  This contradicts our hypothesis and we conclude that in fact $G_0$ has at most one end.
\end{proof}

We now return to the context of piecewise full groups, and find that many such groups and their derived groups have property $(\mathrm{NGT})$.  In particular, this will prove Theorem~\ref{thm:alternatable_NGT} and Corollary~\ref{cor:alternatable_NGT:ends}.

\begin{cor}\label{cor:alternatable_NGT}
Let $X$ be a perfect zero-dimensional compact Hausdorff space and let $G$ be a compressible piecewise full group of homeomorphisms of $X$, and let $\Der(G) \le G_0 \le G$.  Let $U$ be a proper non-empty clopen set, and let $\mc{C} = \{\rist_G(gU^c) \mid g \in G\}$.  Then the conditions of Theorem~\ref{thm:NGT} are satisfied, so $G_0$ has property $(\mathrm{NGT})$.
\end{cor}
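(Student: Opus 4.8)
The plan is to reduce to the case $G=\Full(G)$ (using $\Full(\Full(G))=\Full(G)$) and then verify, in turn, the hypotheses $(\mathbf{J})$, $(\mathbf{2T})$, $(\mathbf{3T})$ and $(\mathbf{S})$ of Theorem~\ref{thm:NGT} for the family $\mc{C}=\{\rist_G(gU^c)\mid g\in G\}$, a carefully chosen $G$-orbit $\mc{C}_2$ on pairs from $\mc{C}$, and a suitable finite subset $\mc{S}\subseteq\mc{C}$; the subgroup $G_0$ is the given one. The whole argument runs on one dynamical input: that a compressible piecewise full group of a perfect compact zero-dimensional space acts fully compressibly, and very transitively on its clopen sets.

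First I would nail down the dynamics. Since the action of $G$ is compressible there is a non-empty proper $G$-compressible clopen set $Y$, and squeezing $Y$ into a proper clopen subset of itself produces, via a $\sigma(\cdot,\cdot)$, a non-trivial element of $G$ supported in $Y$, so the action is micro-supported. One then checks, using $G=\Full(G)$ and compactness of $X$, that the action is minimal; by Lemma~\ref{lem:pw_full}(iii) it is therefore fully compressible, and by Lemma~\ref{lem:pw_full}(i) \emph{every} non-empty proper clopen subset of $X$ is $G$-compressible. A Cantor--Schr\"oder--Bernstein argument inside $\Full(G)$, using full compressibility to split a clopen set into infinitely many equivalent pieces (which avoids the usual non-clopen-union trouble), then shows that $G=\Full(G)$ acts transitively on $n$-tuples of pairwise disjoint non-empty clopen sets with union $\ne X$, for every $n$. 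In particular $\mc{C}=\{\rist_G(V)\mid V\text{ a non-empty proper clopen subset of }X\}$, and by Corollary~\ref{cor:micro-supported_compression} this $\mc{C}$ is a joinable compression family for $G$, which is $(\mathbf{J})$.

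Next I would fix the auxiliary data. Take $\mc{C}_2$ to be the $G$-orbit of $(\rist_G(V_1),\rist_G(V_2))$ with $\emptyset\ne V_2\subsetneq V_1\subsetneq X$ clopen; by the transitivity just established this really is a single orbit, and it is closed under enlarging the first entry within $\mc{C}$ because $V_2\subsetneq V_1\subseteq V_1'\subsetneq X$ gives $\emptyset\ne V_2\subsetneq V_1'\subsetneq X$, which is the second clause of $(\mathbf{2T})$. For the first clause, given such a pair I would adjoin a third clopen set $V_3$ meeting $V_2$ and positioned so that the complements $V_1^c,V_2^c,V_3^c$ are ``independent'' in the sense needed for $(\mathbf{L})$ — any homeomorphisms prescribed on these complements, with the compatibility forced by one of the prescriptions being the identity, glue to an element of $G=\Full(G)$; membership of $(V_1,V_2,V_3)$ in $\mc{C}_3$ is then exactly Tits' Property~(P) for piecewise full groups. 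For $\mc{S}$ I would take the rigid stabilisers of the pieces of a finite clopen partition $X=P_1\sqcup\dots\sqcup P_m$ into proper pieces (possible with $m\ge2$ since $X$ is perfect): since the $P_i$ cover $X$ we get $\bigcap_i\rist_G(P_i)=\triv$, and after refining the partition if necessary (again using that $X$ has no isolated points) one finds, for each $i,j$, a piece $C\in\mc{S}$ with $(P_i,C),(P_j,C)\in\mc{C}_2$, giving $(\mathbf{S})$.

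Finally, $(\mathbf{3T})$: given $g,h\in G$, I would use continuity of $g,h$ together with full compressibility to produce a small non-degenerate configuration — clopen sets $V_1\supsetneq V_2\supsetneq V_3$ of the type defining $\mc{C}_3$, concentrated in a small clopen set so that $gV_1$ and $hV_2$ are also small and each of $(V_1\cup V_2\cup V_3)^c$ and $(gV_1\cup hV_2\cup V_3)^c$ is non-empty and proper — and then build $k\in G=\Full(G)$ agreeing with $g$ on $V_1$, with $h$ on $V_2$, with the identity on $V_3$, and mapping $(V_1\cup V_2\cup V_3)^c$ onto $(gV_1\cup hV_2\cup V_3)^c$ by an element of the full pseudogroup (possible by transitivity of $\Full(G)$ on non-empty proper clopen sets). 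Then $k$ carries $(\rist_G(V_1),\rist_G(V_2),\rist_G(V_3))$ to $(g\rist_G(V_1)g\inv,h\rist_G(V_2)h\inv,\rist_G(V_3))$, giving $(\mathbf{3T})$, and Theorem~\ref{thm:NGT} then yields property $(\mathrm{NGT})$ for $G_0$. I expect the genuine difficulty to lie not in the dynamical reduction but in organising the configuration arguments cleanly: proving the Property~(P)-type gluing statement that makes triples land in $\mc{C}_3$ and that drives $(\mathbf{3T})$, and checking that the chosen $\mc{C}_2$ is genuinely compatible with the ``$A'\ge A$'' clause of $(\mathbf{2T})$.
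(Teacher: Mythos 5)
The central step of your argument fails: a compressible (even minimal and fully compressible) piecewise full group on a perfect compact zero-dimensional space need \emph{not} act transitively on proper non-empty clopen subsets, and no Cantor--Schr\"oder--Bernstein argument can repair this. Full compressibility only lets you map a clopen set \emph{into} any non-empty open set; mapping it \emph{onto} another clopen set is obstructed by a pseudogroup invariant (in groupoid terms, the class in $H_0$). Concretely, for the Higman--Thompson group $V_3=\Full(V_3)$ acting on $\{1,2,3\}^{\Nb}$ -- piecewise full, minimal, fully compressible -- no element maps the cylinder $[1]$ onto $[1]\cup[2]$: every element of $V_3$ carries a union of $k$ cylinders to a union of $k$ cylinders, and the number of cylinders modulo $2$ is a well-defined invariant of a clopen set. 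Hence the orbit $\mc{I}=\{gU\mid g\in G\}$ of the fixed clopen set $U$ need not exhaust the proper clopen sets (nor even be a base of the topology), and $\mc{C}=\{\rist_G(gU^c)\mid g\in G\}$ is in general a proper subfamily of the rigid stabilisers of proper clopen sets. This is exactly the point the paper is careful about ("it is not clear whether $\mc{I}$ is a base of topology"): its verification of Theorem~\ref{thm:NGT} is organised entirely around the single orbit $\mc{I}$ -- $\mc{C}_3$ is shown to contain the triples coming from translates of $U$ with pairwise disjoint closures and non-dense union, $\mc{C}_2$ is taken to be the pairs extending to such triples, and $\mc{S}$ is built from a minimal cover of $X$ by three translates of $U$ together with three further small translates, rather than from a clopen partition (whose pieces' rigid stabilisers would typically not lie in $\mc{C}$ at all).

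Everything downstream of your transitivity claim inherits the gap: that your nested pairs form a single $G$-orbit $\mc{C}_2$, the second clause of $(\mathbf{2T})$, the membership in $\mc{C}$ (hence in $\mc{S}\subseteq\mc{C}$) of the rigid stabilisers of partition pieces, and the step in your $(\mathbf{3T})$ argument where $(V_1\cup V_2\cup V_3)^c$ is mapped onto $(gV_1\cup hV_2\cup V_3)^c$ "by transitivity on proper clopen sets" all rest on the false statement. A secondary issue: minimality does not follow from compressibility alone (for instance, the stabiliser of a point in Thompson's group $V$ is piecewise full and acts compressibly but not minimally), so the opening dynamical reduction to the fully compressible case via Lemma~\ref{lem:pw_full}(iii) also needs more justification than the one line you give it.
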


\begin{proof}
By Lemma~\ref{lem:pw_full} the action of $G$ on $X$ is fully compressible and micro-supported, from which it follows via Corollary~\ref{cor:micro-supported_compression} that $\mc{C}$ is a joinable compression family for $G$, so $(\mathbf{J})$ holds.  Write $\mc{I} = \{gU \mid g \in G\}$.

We define $\mc{C}_3$ as in Theorem~\ref{thm:NGT}.  Using the fact that $G$ is piecewise full, it is straightforward to see that $\mc{C}_3$ contains the set $\mc{C}'_3$ of all triples $(\rist_G(I),\rist_G(J),\rist_G(K))$ with $I,J,K \in \mc{I}$ such that the closures of $I,J,K$ are pairwise disjoint and such that $I \cup J \cup K$ is not dense, and moreover that $G$ acts transitively on $\mc{C}'_3$.  We take $\mc{C}_2$ to be the set of all pairs $(A,B)$ such that $\mc{C}_3$ has an element of the form $(A,B,C)$.  Properties $(\mathbf{2T})$ and $(\mathbf{3T})$ are now clear.

Finally, we must obtain a set $\mc{S}$ as in Theorem~\ref{thm:NGT}; some care is needed here, as it is not clear whether $\mc{I}$ is a base of topology.

We first obtain a subset $\mc{I}_0 \subseteq \mc{I}$ of $X$ with $|\mc{I}_0| = 3$ that is minimal among covers of $X$.  Since the action is fully compressible, there are $I,J \in \mc{I}$ such that $\{I,J\}$ is a cover of $X$.  Take $K_1,K_2,K_3,L \in \mc{I}$ pairwise disjoint such that $K_1$ and $K_2$ are disjoint from $I$ and such that $L$ and $K_3$ are disjoint from $J$.  Then there is some $g \in G$ such that $gK_i = K_{i-1}$ (taking the subscript modulo $3$) and $g$ fixes all points outside of $K_1 \cup K_2 \cup K_3$.  Now write $J_1 = I, J_2 = gJ, J_3 = g^2J$ and let $\mc{I}_0 = \{J_1,J_2,J_3\}$.  We see that $J \subseteq gJ \cup g^2J$, so $\mc{I}_0$ covers $X$.  We see also that each of $J_1,J_2,J_3$ is essential in any subcover of $\mc{I}_0$: specifically, $I$ is the only element to intersect $L$, $gJ$ is the only element to intersect $K_1$ and $g^2J$ is the only element to intersect $K_2$.  Thus $\mc{I}_0$ is a minimal cover with $3$ elements.

By the minimality of $\mc{I}_0$, and by the fact that $X$ is a normal space, there is some non-empty open subset $O_i$ of $J_i$ such that $\ol{O_i}$ is disjoint from the closure of the union of $\mc{I}_0 \setminus \{J_i\}$.  Since the action is fully compressible, there exists $U_i \in \mc{I}$ such that $U_i \subseteq O_i$.  Now set $\mc{J} = \mc{I}_0 \cup \{U_1,U_2,U_3\}$.  Given $Y,Z \in \mc{J}$, we see that each of $\ol{Y}$ and $\ol{Z}$ intersects at most one of $\ol{U_1},\ol{U_2},\ol{U_3}$, so there is some $i \in \{1,2,3\}$ such that $\ol{U_i}$ is disjoint from both $\ol{Y}$ and $\ol{Z}$.

We now take $\mc{S} = \{\rist_G(A^c) \mid A \in \mc{J}\}$; given the form taken by $\mc{C}_2$, it is now easy to see that $(\mathbf{S})$ is satisfied.

All hypotheses of Theorem~\ref{thm:NGT} are now satisfied, and the conclusion follows.
\end{proof}

\section{Compressible actions of locally compact groups}

\subsection{Normal subgroups containing contraction groups}\label{sec:open_monolith}

We now prove Corollary~\ref{cor:tree:simpleG++} and Theorem~\ref{thm:loc_decomp_compressible}, first appealing to a fact about contraction groups in \tdlc groups.

\begin{defn}
	Let $G$ be a topological group and let $g \in G$.  The \defbold{contraction group} of $g$ in $G$ is
	\[
	\con_G(g) := \{u \in G \mid g^nug^{-n} \rightarrow 1 \text{ as } n \rightarrow \infty\}.
	\]
\end{defn}

\begin{lem}[{See \cite[Proposition~5.1]{CRW-TitsCore}}]\label{lem:contraction}
	Let $G$ be a \tdlc group and let $A$ be a normal subgroup of $G$ (not necessarily closed).  Then $\ol{\con_G(a)} \le A$ for all $a \in A$.
\end{lem}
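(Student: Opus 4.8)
Proof proposal for Lemma~\ref{lem:contraction} ($\ol{\con_G(a)} \le A$ for $a$ in a normal subgroup $A$ of a \tdlc group $G$):

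The plan is to reduce to showing that $\con_G(a) \le A$ (since $A$ is normal, it need not be closed, so I cannot immediately conclude about the closure; I return to this below), and then to exhibit each $u \in \con_G(a)$ as a product of elements that visibly lie in $A$. The key structural input I would use is the standard fact (Baumgartner--Willis) that $\con_G(a)$ admits a \emph{tidy} description: there is a compact open subgroup $U$ of $G$ that is tidy for $a$, and writing $U_+ = \bigcap_{n \ge 0} a^n U a^{-n}$ and $U_{++} = \bigcup_{n \ge 0} a^{-n} U_+ a^n$, one has $\con_G(a) = U_{++}$, with $U_{++}$ closed in the relevant cases (or at least $\ol{\con_G(a)} = \con_G(a) \cdot \nub_G(a)$ with $\nub_G(a)$ compact). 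The crucial observation is then that $U_+ \le a U_+ a^{-1}$ is \emph{not} what I want; rather, for $u \in U_+$ I can write $u = (a^{-1} u a) \cdot [a, u^{-1}]^{-1}$-type identities... more cleanly: for $u \in U_+$, since $a u a^{-1} \in U_+ \le \con_G(a)$ too, and more to the point, for any $u \in \con_G(a)$ the commutator $[a^n, u] = a^n u a^{-n} u^{-1} \to u^{-1}$ as $n \to \infty$; since each $[a^n, u] \in A$ (as $A$ is normal and contains $a$, hence contains $a^n u a^{-n}$ whenever... no — $a \in A$ gives $a^n u a^{-n} u^{-1} \in A$ only if we also know $u \in A$).

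Let me correct the mechanism: the right identity is that for $u \in \con_G(a)$, one has $u = \lim_{n\to\infty} a^{-n}(a^n u a^{-n}) a^n$ trivially, which says nothing. Instead, consider $v_n := a^{-n} u a^n$; this sequence need not converge. The actual trick in \cite{CRW-TitsCore} is: take $u \in \con_G(a)$, so $a^n u a^{-n} \to 1$. Then $u = (u \cdot a^n u^{-1} a^{-n})\cdot(a^n u a^{-n})$, and as $n \to \infty$ the second factor tends to $1$ while the first factor $u \cdot a^n u^{-1} a^{-n} = [u, a^n]$ lies in $\ngrp{a}^G \cap \grp{u} \cdot (\ldots)$ — specifically $[u,a^n] \in \ngrp{a} \le A$ because it is a product of conjugates of $a^{\pm 1}$. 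Hence $u$ is a limit of elements of $\grp{[u,a^n] : n} \cdot \{a^n u a^{-n}\}$, and taking closures, $u \in \ol{A \cdot \{g_n\}}$ with $g_n \to 1$; so $u \in \ol{A}$. This shows $\con_G(a) \le \ol{A}$, hence $\ol{\con_G(a)} \le \ol{A}$.

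To upgrade $\ol{A}$ to $A$: here I would invoke the hypothesis more carefully. We have for each $u \in \con_G(a)$ and each $n$ that $[u, a^n] = u a^n u^{-1} a^{-n} \in \ngrp{a}_G \le A$, and $[u,a^n] \to u$. So $u \in \ol{A}$; but moreover $u^{-1}[u,a^n] = a^n u^{-1} a^{-n} u \cdot$ — hmm, the cleanest route: $u^{-1}\cdot [u,a^n] \cdot$ gives $u^{-1} u a^n u^{-1} a^{-n} = a^n u^{-1} a^{-n} \to 1$, so $u^{-1} \cdot ([u,a^n]) \to u^{-1}$ confirms the same thing. The genuine point needed is Proposition~5.1 of \cite{CRW-TitsCore} itself, whose proof shows $\con_G(a)$ is contained in \emph{every} closed normal subgroup containing $a$; combined with $\ol{A}$ being such a subgroup we get $\con_G(a) \le \ol{A}$, and then $\ol{\con_G(a)} \le \ol{A}$. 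Since the statement we are quoting only claims $\ol{\con_G(a)} \le A$ when $A$ is normal — and $\ol{A}$ is the smallest closed subgroup containing $A$ — I suspect the intended reading is that one shows directly $\con_G(a)$ together with its nub lies in $A$ using that $A$, being normal, is closed under the operation $x \mapsto [x, g]$ for $g \in A$ and the nub is generated by such commutators of bounded type.

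\begin{proof}
As this is quoted from \cite[Proposition~5.1]{CRW-TitsCore}, we only indicate the argument. Let $a \in A$ and $u \in \con_G(a)$. For each $n \ge 1$ the commutator $[a^n, u] = a^n u a^{-n} u^{-1}$ is a product of conjugates of $a$ and $a^{-1}$ by elements of $G$, hence lies in the normal closure $\ngrp{a}_G$, which is contained in $A$. As $n \to \infty$ we have $a^n u a^{-n} \to 1$, so $[a^n, u] \to u^{-1}$; thus $u^{-1} \in \ol{\ngrp{a}_G} \le \ol{A}$, and therefore $\con_G(a) \le \ol{A}$. The finer point that one may in fact take $A$ in place of $\ol{A}$ — using that the nub $\nub_G(a)$ is topologically generated by commutators $[a, u]$ with $u \in U_+$ for a tidy subgroup $U$, all of which lie in $\ngrp{a}_G \le A$, so that $\ol{\con_G(a)} = \con_G(a)\cdot \nub_G(a)$ is the closure of a subset of $A$ — is carried out in \cite[Proposition~5.1]{CRW-TitsCore}. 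Hence $\ol{\con_G(a)} \le A$ follows once one notes the closure is taken in $G$ and $A$ is the set asserted to contain it.
\end{proof}

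The main obstacle I expect is precisely the passage from $\ol{A}$ to $A$: the naive commutator argument only lands in $\ol{A}$, and getting the sharper containment in the (possibly non-closed) subgroup $A$ requires the tidy-subgroup structure theory — specifically that $\con_G(a)$ and its nub are generated, in a controlled way, by iterated commutators with $a$, each of which is manifestly in $\ngrp{a}_G \le A$. This is the content of \cite[Proposition~5.1]{CRW-TitsCore} and is the step I would not reprove from scratch.
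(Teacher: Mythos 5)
The paper itself gives no proof of this lemma: it is quoted verbatim from \cite[Proposition~5.1]{CRW-TitsCore}, so deferring the substantive step to that reference is legitimate and is exactly what the paper does. The problem is that the part you argue yourself does not reach the stated conclusion, and your sketch of how the cited result closes the gap is wrong. Your commutator limit $[a^n,u] \to u^{-1}$ with $[a^n,u] \in \ngrp{a} \le A$ only yields $\con_G(a) \le \ol{A}$, hence $\ol{\con_G(a)} \le \ol{A}$; since $A$ is not assumed closed, this is strictly weaker than the lemma (and the weaker statement would not suffice for the applications in Section~\ref{sec:open_monolith}, where $A$ is an abstract normal subgroup one wants to prove open). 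Your proposed repair --- that $\nub_G(a)$ is \emph{topologically} generated by commutators lying in $A$, so that $\ol{\con_G(a)} = \con_G(a)\nub_G(a)$ is ``the closure of a subset of $A$'' --- lands you in exactly the same place: the closure of a subset of $A$ is only contained in $\ol{A}$. The final sentence of your proof (``follows once one notes the closure is taken in $G$\dots'') is circular and proves nothing.

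The idea that actually makes the cited proposition work, and which is missing from your sketch, is an exact rather than limiting factorisation: using tidy subgroups and compactness of the nub, one shows that every element $u$ of $\ol{\con_G(g)}$ can be written as a product of a bounded number of commutators of the form $[x,g^{\pm 1}]$ with $x \in G$ --- an identity in the group, with no passage to a limit. Each such commutator lies in the normal closure $\ngrp{g}$, so if $A \normal G$ and $a \in A$, then $\ol{\con_G(a)} \subseteq \ngrp{a} \le A$ directly. If you wish to treat the lemma as a black box, simply cite \cite[Proposition~5.1]{CRW-TitsCore} as the paper does; if you wish to sketch its proof, it is this uniform product decomposition, not topological generation of the nub by elements of $A$, that must be invoked.
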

	
\begin{proof}[Proof of Corollary~\ref{cor:tree:simpleG++}]
	Note that $G$ is totally disconnected, since $\Aut(T)$ is totally disconnected.  We may assume that $G^{++}$ is non-trivial, ensuring that $T$ is of general type; since $T$ admits a geometrically dense action, in fact $T$ is leafless and without isolated ends.  Let $H$ be the rigid stabiliser of some half-tree $T_a$. 
	By Corollary~\ref{cor:tree}, the action of $\Der(G^{++})$ is geometrically dense, so by Lemma~\ref{lem:tree:geom_dense}, there is a translation $g \in \Der(G^{++})$ whose axis contains the edge defining the half-tree $T_a$ and sending $T_a$ to a half-tree properly contained in $T_a$, so
	 $\bigcap_{n \ge 0} g^n(T_a) = \emptyset$.
	Since $G$ carries the permutation topology,  $H \le \con_G(g)$.  Hence $H \le \Der(G^{++})$ by Lemma~\ref{lem:contraction}.  Since the half-tree $T_a$ was arbitrary we conclude that $G^{++} \le \Der(G^{++})$, that is, $G^{++}$ is perfect.  We deduce from Corollary~\ref{cor:tree} that in fact $G^{++}$ is simple.
\end{proof}

\begin{rmk}
	It is likely that analogous arguments can be used to show that a locally compact automorphism group $G$ of a CAT(0) cube complex such as the ones considered in \cite{HaglundPaulin} and \cite{Lazarovich} has simple $G^{++}$. Checking the details of this is beyond the scope of the present paper.
\end{rmk}

\begin{proof}[Proof of Theorem~\ref{thm:loc_decomp_compressible}]
	Let $Z$ be a non-empty compressible clopen set for the action of $A$.  We see that the space $\bigcup_{g \in A}gZ$ is open and $G$-invariant, hence equal to $X$, so $X$ is covered by $A$-translates of $Z$.  Since $Z$ is compressible for $A$, it follows that $X$ is covered by $A$-translates of any non-empty open subset of $X$.  Thus the action of $A$ on $X$ is minimal.
		
	If $G$ is discrete the remaining conclusions are clear, so let us assume $G$ is not discrete.  We claim that for every non-empty clopen $Y \subseteq X$, the rigid stabiliser $\rist_G(Y)$ is non-discrete.  Since $G$ acts minimally and $X$ is compact, there exist $g_1,\dots,g_m \in G$ such that $X = \bigcup^m_{i=1}g_iY$.  There is then a partition $\mc{P} = \{Y_1,\dots,Y_m\}$ of $X$ such that $Y_i \subseteq g_iY$, and by local decomposability, $G$ has an open subgroup $\prod^m_{i=1}\rist_G(Y_i)$.  It follows that $\rist_G(Y_i)$ is non-discrete for some $i$; but then
	\[
	\rist_G(Y_i) \le \rist_G(g_iY) = g_i\rist_G(Y)g\inv_i,
	\]
so $\rist_G(Y)$ is non-discrete.
	
	Let $K = \rist_G(Z)$.  By compressibility there is $g \in A$ such that $gZ$ is properly contained in $Z$, and it follows that $K$ has an open subgroup of the form $gKg\inv \times L$ where $L = \rist_G(Z \setminus gZ)$.  By Lemma~\ref{lem:contraction}, we have $\con_G(g) \le A$.  Moreover, since the action is locally weakly decomposable in the terminology of \cite{CRW-Part2}, we see by \cite[Proposition~6.14]{CRW-Part2} that the intersection $L^* = \con_G(g) \cap L$ is open in $L$.  Since $X$ is compact and the action of $A$ on $X$ is minimal, there are $a_1,\dots,a_n \in A$ such that $X = \bigcup^n_{i=1}a_i(Z \setminus gZ)$.  By local decomposability it follows that $\grp{a_iL^*a\inv_i \mid 1 \le i \le n}$ is open in $G$; thus $A$ is open in $G$.  In particular, $A$ inherits from $G$ the property of being locally decomposable.
\end{proof}

\subsection{Robustly monolithic groups and compressibility}\label{sec:robustly_monolithic}

We recall some known properties of groups of groups in the class $\ms{R}$.

\begin{lem}\label{lem:simple_dynamics}
	Let $G \in \ms{R}$.
	\begin{enumerate}[(i)]
	\item \textup{(See \cite[Proposition~5.1.2]{CRW-DenseLC})} There are no non-trivial abelian locally normal subgroups of $G$.
	\item \textup{(See \cite[Section~5.1]{CRW-Part1})} The following is a Boolean algebra:
		\[
		\mathrm{LC}(G):= \{ \CC_G(K) \mid K \le G \text{ locally normal}\}.
		\]
	The set of elements $K$ of $\mathrm{LC}(G)$ such that $K \times \CC_G(K)$ is open in $G$ form a subalgebra $\mathrm{LD}(G)$.
	\item \textup{(See \cite[Theorem~7.3.3]{CRW-DenseLC})} Write $\Omega_G$ for the Stone space of $\mathrm{LC}(G)$, equipped with the action induced by conjugation on $\mathrm{LC}(G)$.  The action of $\TMon(G)$ on $\Omega_G$, and hence on every quotient $G$-space of $\Omega_G$, is minimal and compressible.  Consequently the action of $G$ on any non-trivial quotient $G$-space of $\Omega_G$ is faithful.
	\end{enumerate}
\end{lem}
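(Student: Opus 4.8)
The plan is to recognise all three parts as instances of the structure theory developed in \cite{CRW-Part1} and \cite{CRW-DenseLC}, so that the work consists chiefly in checking that membership in $\ms{R}$ supplies the hypotheses those results demand, together with one short deduction at the end. The natural order follows the dependencies: part (i) is the foundational non-degeneracy property, part (ii) produces the Boolean algebra, and part (iii) reads off the dynamics of its Stone space and derives faithfulness.

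For part (i), I would first unwind the definition of $\ms{R}$: for $G \in \ms{R}$ the monolith $\TMon(G)$ is non-discrete, topologically simple, and regionally expansive, so there is a compactly generated open $H \le \TMon(G)$ and an identity neighbourhood $U$ of $H$ with $\bigcap_{h \in H} hUh\inv = \triv$. This expansivity is precisely the input required by \cite[Proposition~5.1.2]{CRW-DenseLC}: a hypothetical non-trivial abelian locally normal subgroup $A$ would have closure $\ol{A}$ a non-trivial closed abelian locally normal subgroup, whose normal closure in the topologically simple group $\TMon(G)$ is dense, and one then contradicts the triviality of the conjugation-core of the expansive neighbourhood $U$. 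Thus $G$ has no non-trivial abelian locally normal subgroup, and I would cite the proposition for the detailed contradiction rather than reproduce it.

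Part (ii) is the centraliser-lattice construction recalled in \cite[Section~5.1]{CRW-Part1}. There the order inherited from the structure lattice $\mathrm{LN}(G)$, together with the orthocomplementation $K \mapsto \CC_G(K)$, is shown to make $\mathrm{LC}(G)$ a Boolean algebra for every \tdlc group, with the decomposable elements (those $K$ for which $K \times \CC_G(K)$ is open) forming a subalgebra $\mathrm{LD}(G)$. Part (i) additionally forces $K$ and $\CC_G(K)$ to intersect trivially up to local equivalence—their intersection being an abelian locally normal subgroup—so that the orthocomplement is a genuine complement and the algebra is non-degenerate. I would record both statements from the cited section.

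For part (iii), the Boolean algebra $\mathrm{LC}(G)$ has a Stone space $\Omega_G$, and conjugation makes $G$ act on $\mathrm{LC}(G)$ by Boolean automorphisms, hence on $\Omega_G$ by homeomorphisms; the same holds on every quotient $G$-space. That the $\TMon(G)$-action on $\Omega_G$ is minimal and compressible is exactly \cite[Theorem~7.3.3]{CRW-DenseLC}, applied to $\TMon(G)$ via its non-discreteness, topological simplicity and regional expansivity (this is where (i) and the expansive hypothesis are consumed). The faithfulness clause is the one step I would argue directly: if $G$ acts on a non-trivial quotient $G$-space $Y$ of $\Omega_G$, its kernel $N$ is a closed normal subgroup, and minimality of the $\TMon(G)$-action shows $\TMon(G)$ acts non-trivially on $Y$, so $\TMon(G) \nleq N$; since $\TMon(G)$ is the intersection of all non-trivial closed normal subgroups, this forces $N = \triv$. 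The main obstacle is not any single deduction but the bookkeeping of matching external hypotheses—in particular confirming that regional expansivity is the exact form of expansivity required by \cite[Proposition~5.1.2]{CRW-DenseLC} and \cite[Theorem~7.3.3]{CRW-DenseLC}.
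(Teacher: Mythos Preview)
Your proposal is correct and matches the paper's approach: the lemma is stated there as a collection of results imported from \cite{CRW-Part1} and \cite{CRW-DenseLC} with no proof given beyond the citations, and your sketch simply expands on why the hypotheses of those cited results are met by $G \in \ms{R}$, together with the short faithfulness deduction for the final clause of (iii). The paper leaves that last clause implicit in the ``Consequently'', so your explicit argument via $\TMon(G) \nleq N \Rightarrow N = \triv$ is a welcome addition rather than a deviation.
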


 If $G$ has an open subgroup of the form $A \times B$ where $A$ and $B$ are non-trivial closed subgroups, then the Stone space $X = \mf{S}(\mathrm{LD}(G))$ of $\mathrm{LD}(G)$ is a non-trivial quotient $G$-space of $\Omega_G$; moreover, the action of $G$ on $X$ is locally decomposable.  Corollary~\ref{cor:R_open_monolith} now follows from Lemma~\ref{lem:simple_dynamics} and Theorem~\ref{thm:loc_decomp_compressible}.
 
It is currently unknown to the authors whether the action of $G$ on $\Omega_G$ is fully compressible for all $G \in \ms{R}$.  In order for $G \in \ms{R}$ to act fully compressibly on $\Omega_G$, it is enough for there to exist \emph{some} non-trivial $G$-space $X$ on which the action is continuous, micro-supported and fully compressible, see \cite[\S2.2]{LeBoudecURS}.

		\bibliographystyle{amsalpha} 
		\bibliography{biblio}

\end{document}